\providecommand{\U}[1]{\protect\rule{.1in}{.1in}}
\providecommand{\U}[1]{\protect\rule{.1in}{.1in}}
\providecommand{\U}[1]{\protect\rule{.1in}{.1in}}
\newtheorem{theorem}{Theorem}
\theoremstyle{plain}
\newtheorem*{acknowledgement}{Acknowledgement}
\newtheorem{corollary}{Corollary}
\newtheorem{definition}{Definition}
\newtheorem{example}{Example}
\newtheorem{lemma}{Lemma}
\newtheorem{proposition}{Proposition}
\newtheorem{remark}{Remark}
\numberwithin{equation}{section}
\begin{document}
\title[Limiting interpolation spaces via extrapolation]{Limiting interpolation spaces via extrapolation}
\author{Sergey V. Astashkin}
\address[S.V. Astashkin]{Samara National Research University, Moskovskoye shosse 34,
443086, Sama\-ra, Russia}
\email{astash56@mail.ru}
\author{Konstantin V. Lykov}
\address[K.V. Lykov]{Image Processing Systems Institute --- Branch of the Federal
Scientific Researh Centre "Crystallography and Photonics" of Russian Academy
of Sciences, Molodogvardejskaya st. 151, 443001 Samara, Russia}
\email{alkv@list.ru}
\author{Mario Milman}
\address[M. Milman]{Instituto Argentino de Matematicas\\
Buenos Aires, Argentina}
\email{mario.milman@gmail.com}
\urladdr{https://sites.google.com/site/mariomilman/}
\thanks{The authors collaboration was partially supported by a grant from the Simons
Foundation ($\backslash$\#207929 to Mario Milman). The first author has been
supported by the Ministry of Education and Science of the Russian Federation
(project 1.470.2016/1.4) and by the RFBR grant 18-01-00414. The second author
has been supported by the RFBR grants 16-41-630676 and 17-01-00138.}
\date{February 21, 2018}
\subjclass{Primary 46B70, 46M35; Secondary 46E30, 47B10}
\keywords{interpolation space, extrapolation space, Lions-Peetre spaces, Lebesgue
spaces, Schatten ideals}

\begin{abstract}
We give a complete characterization of limiting interpolation spa\-ces for the
real method of interpolation using extrapolation theory. For this purpose the
usual tools (e.g., Boyd indices or the boundedness of Hardy type operators)
are not appropriate. Instead, our characterization hinges upon the boundedness
of some simple operators (e.g. $f\mapsto f(t^{2})/t$, or $f\mapsto f(t^{1/2}%
)$) acting on the underlying lattices that are used to control the $K$- and
$J$-functionals. Reiteration formulae, extending Holmstedt's classical
reiteration theorem to limiting spaces, are also proved and characterized in
this fashion. The resulting theory gives a unified roof to a large body of
literature that, using ad-hoc methods, had covered only special cases of the
results obtained here. Applications to Matsaev ideals, Grand Lebesgue spaces,
Bourgain-Brezis-Mironescu-Maz'ya-Shaposhnikova limits, as well as a new vector
valued extrapolation theorems, are provided.

\end{abstract}
\maketitle
\tableofcontents

\section{Foreword}

In this section we take up Halmos' famous maxim \textquotedblleft don't worry
if you do not understand the preliminaries" one step further, as we try to
briefly explain our perspective to the necessarily technical material that follows.

In this paper we give a characterization of the so-called \textquotedblleft
limiting interpolation spaces" via extrapolation theory. We also find sharp
conditions, under which the reiteration formulae associated with the above
spaces hold.

The original motivation comes from the following apparently unrelated results.
On the one hand, there is a paper by Gomez-Milman \cite{GM} where the authors
find a way to extend the classical Lions-Peetre scale of interpolation spaces
to limiting values of the parameters. In particular, in this fashion $LLogL$
and Dini type of spaces appear naturally as limiting spaces of real
interpolation scales. Moreover, the basic results of the Lions-Peetre theory
were extended to this context, including a special \textquotedblleft
reiteration formula"\footnote{In particular, they find an abstract form of an
interpolation theorem of Zygmund. In \cite{AZYG}, Zygmund had shown that if
$T$ is an operator acting on functions defined on a finite measure space, then
if $T$ is of weak types $(1,1),(p_{0},p_{0}),$ for some $p_{0}>1,$ it follows
that $T$ is bounded from $L(LogL)$ to $L^{1}.$ It is easy to give a direct
proof or one can proceed by interpolation followed by extrapolation. Indeed,
the assumptions of Zygmund's theorem imply via, the classical Marcinkiewicz
interpolation theorem, that $T$ is of strong type $(p,p)$ for $1<p<p_{0}$ with
$\left\Vert T\right\Vert _{L^{p}\rightarrow L^{p}}\preceq(p-1)^{-1}$ $\left(
(p-1)^{p-1}\right)  ^{\frac{p_{0}}{p_{0}-1}}\approx(p-1)^{-1}$ as
$p\rightarrow1$ \textbf{(}cf. \cite[Theorem 4.1, page 91]{torchinsky}%
\textbf{).} Therefore, the conclusion of Zygmund's theorem follows by Yano's
extrapolation theorem (cf. \cite{yano}).}. The results of \cite{GM} were
welcomed and sparked a literature devoted to the construction of limiting
spaces in order to complete the Lions-Peetre scale of spaces and obtain the
corresponding reiteration theorems (cf.
\cite{Cobos1,Cobos2,CF-CKU,Cobos-M,Cobos-T,Cobos-S,Cobos-S2,Cobos-K,Cobos-Segurado,Cobos-Segurado2,Cobos-D,Cobos-D2,Cobos-D3}%
, as well as the many references therein). On the other hand, not long after
that, Jawerth-Milman (cf. \cite{JM91} and the references therein) developed a
far-reaching generalization of Yano's theorem. In particular, $LLogL$ spaces,
can be naturally described by the extrapolation methods of \cite{JM91}. For
example, setting
\[
\langle A_{0},A_{1}\rangle_{0,1}^{K}:=\{f\in A_{0}+A_{1}:\,\int_{0}%
^{1}K(t,f;A_{0},A_{1})\frac{dt}{t}<\infty\},
\]
and working on spaces based on a finite measure space,
we have%
\begin{equation}
LLogL=\langle L^{1},L^{\infty}\rangle_{0,1}^{K}=\sum\limits_{p>1}\frac{L^{p}%
}{p-1}. \label{fore0}%
\end{equation}
In extrapolation theory the reiteration formula of \cite{GM}%
\begin{equation}
\langle L^{1},(L^{1},L^{\infty})_{\theta,p_{0}}^{K}\rangle_{0,1}^{K}=\langle
L^{1},L^{\infty}\rangle_{0,1}^{K} \label{fore1}%
\end{equation}
takes the following form: for all $p_{0}>1,$ with norm equivalence, we have
\[
\sum\limits_{p>1}\frac{L^{p}}{p-1}=\sum\limits_{1<p<p_{0}}\frac{L^{p}}{p-1}.
\]
A similar result holds for the so called $\Delta$-method of extrapolation.
These results are valid not only for $L^{p}$ spaces but for general
interpolation scales, and provide, in particular, an explanation to the
results of \cite{GM}. Thus, the connection between limiting spaces,
reiteration and extrapolation was known early on, at least in some special cases.

However, the vast literature on limiting spaces that has developed afterwards
seems to be largely independent from extrapolation theory\footnote{We should
also mention here the important work of Cwikel-Pustylnik \cite{CP} on sharp
forms of the Marcinkiewicz interpolation theorem. Although we shall not
discuss the connection in detail here we should mention that the limiting
spaces in \cite{CP} can be easily seen to be extrapolation spaces as well.}
(cf. the recent survey presented in \cite{Segurado}). The purpose of this
paper is to restore the connection between limiting spaces and extrapolation
theory in full generality.

The limiting interpolation spaces considered in this paper are of the form
$\langle\vec{A}\rangle_{F}^{K}$ (resp. $\langle\vec{A}\rangle_{F}^{J}),$ where
$\vec{A}$ is any ordered pair of Banach spaces, and $F$ is a fixed Banach
lattice of measurable functions on $[0,1]$, that is a parameter for the
corresponding real method used$,$ that controls the usual $K-$ (resp. $J-)$
functionals. In this context we ask: under what conditions on $F$ can we
guarantee that $\langle\vec{A}\rangle_{F}^{K}$ (resp. $\langle\vec{A}%
\rangle_{F}^{J})$ can be represented by an extrapolation functor applied to a
Lions-Peetre scale $\{\vec{A}_{\theta,q}\}$? To answer this question we use
the general class of extrapolation functors introduced in \cite{As2003} and we
impose conditions on $F.$ Surprisingly, a complete answer is provided by the
boundedness of simple operations\footnote{As it will be seen later, the
particular form of these special operators is connected with the form of the
well known Holmstedt reiteration formula (cf. \cite{BL}).} acting on the
functions of $F$. The basic operators in question are $f\mapsto f(t^{2})/t$,
or $f\mapsto f(t^{1/2}),$ and their variants/or iterations. So, for example
the general version of (\ref{fore0}) is given by Theorem \ref{StrExtrAbst(J)}:
If $f\mapsto f(t^{1/2})$ is bounded on $F$ (resp. if $f\mapsto f(t^{2})/t$ is
bounded on $F,$ cf. Theorem \ref{StrExtrAbst}) then
\[
\langle\vec{A}\rangle_{F}^{J}=Ext_{F}\{\vec{A}_{\theta,q}\},
\]
where $Ext_{F}$ is a suitable extrapolation functor generalizing the $\Sigma
$-functor (resp. when $Ext_{F}$ is a generalization of the $\Delta
$-functor)$.$ The corresponding reiteration formula (\ref{fore1}) takes the
following form. If $F$ is an interpolation Banach lattice on $[0,1]$ with
respect to the pair $({L}^{\infty},{L}^{\infty}(1/t)),$ then the boundedness
of $f\mapsto f(t^{1/2})$ on $F$ is equivalent to the following equivalent
statement: for every ordered pair $\vec{A}=(A_{0},A_{1})$ and for every
$\theta\in(0,1)$, $1\leq q\leq\infty,$ we have
\begin{equation}
\vec{A}_{F}^{K}={\langle}A_{0},\vec{A}_{\theta,q}{\rangle}_{F}^{K}.
\label{fore2}%
\end{equation}

The proof of these results requires the use of the deepest parts of real
interpolation theory. In particular, the $K-$divisibility theorem of
Brudnyi-Kruglyak \cite{BK} and some of its consequences or variants: the
strong form of the fundamental lemma (cf. \cite{CJM90} and the references
therein), the fact that suitable pairs of Banach spaces (e.g., $(L^{1}%
,L^{\infty}))$ are $Conv_{0}$-abundant, are combined with the boundedness
assumptions we place on the special operators $f\mapsto f(t^{2})/t$, or
$f\mapsto f(t^{1/2})$ that act on the underlying lattices controlling the $K$
and $J$ functionals.

In our development we uncover applications to Matsaev ideals, generalized
versions of Zygmund's theorem, and using ideas of Pisier we show vector valued
versions of Yano's theorem. We also discuss the connection of extrapolation to
limits of norm inequalities, and show how extrapolation theory leads to slight
extensions of limit theorems due to
Bourgain-Brezis-Mironescu-Maz'ya-Shaposhnikova (cf. \cite{bbm1}, \cite{maz},
\cite{Mlim}, \cite{PO}, \cite{POS}) (cf. Section \ref{sec:peelim}).

\section{Introduction}

Many of the familiar spaces we use in analysis can be described using
interpolation methods (cf. \cite{bs}, \cite{BL}, \cite{BK}, \cite{BB},
\cite{Tr}). In particular, the real and complex methods of interpolation play
a central r\^{o}le in the applications of interpolation theory to analysis.
For a given pair\footnote{In the literature they are usually refered to as a
\textquotedblleft compatible couple of Banach spaces" (cf. \cite{BL}) or
simply \textquotedblleft pairs", the latter will be our prefered
nomenclature.} of Banach spaces $\vec{A}=(A_{0},A_{1}),$ the classical
Lions-Peetre interpolation scales $\vec{A}_{\theta,q}^{K}$ and $\vec
{A}_{\theta,q}^{J}$ (cf. Section \ref{sec:back} below) are defined for
parameters $(\theta,q)\in\lbrack0,1]\times(0,\infty].$ It is customary to
restrict $\theta$ to be in the open interval $(0,1),$ and, when dealing with
Banach spaces, to let $q\in\lbrack1,\infty].$ It will be useful to remind the
reader the reasons for these restrictions. Indeed, it can be easily seen that
for $1\leq q<\infty$ and $\theta=0,$ or $\theta=1,$ we have (cf. \cite[page
168]{BB}):%
\begin{equation}
\vec{A}_{j,q}^{K}=\{0\},j=0,1;\text{ }1\leq q<\infty. \label{nueva'}%
\end{equation}
In the remaining limiting cases: $(\theta,q)\in\{0,1\}\times\{\infty\},$ the
usual definition of $\vec{A}_{\theta,q}^{K}$ still makes sense and gives (cf.
\cite[(1.34) page 300]{bs})%
\begin{equation}
\vec{A}_{0,\infty}^{K}=\tilde{A}_{0},\vec{A}_{1,\infty}^{K}=\tilde{A}_{1},
\label{K1}%
\end{equation}
where $\tilde{A}_{i}$ denotes the corresponding Gagliardo completion of
$A_{i},i=0,1,$ in $A_{0}+A_{1}$ with the norm (see \cite{Ovch})
\[
{\Vert a\Vert}_{\tilde{A}_{i}}:=\sup_{0<t<\infty}t^{-i}K(t,a;\vec{A})=%
\begin{cases}
\lim_{t\rightarrow0}t^{-1}K(t,a;\vec{A})\quad\mbox{if }i=1,\\
\lim_{t\rightarrow\infty}K(t,a;\vec{A})\quad\mbox{if }i=0.
\end{cases}
\]
Moreover, since $\vec{A}_{\theta,q}^{K}=(\tilde{A}_{0},\tilde{A}_{1}%
)_{\theta,q}^{K},$ for $0<\theta<1$, $1\leq q\leq\infty;$ it is customary to
assume that the original given pairs are Gagliardo complete, and $0<\theta<1,$
$1\leq q\leq\infty$.

A similar situation occurs when considering the spaces $\vec{A}_{\theta,q}%
^{J}$, which were initially defined for the same range of values of the
parameters $(\theta,q)$. Peetre \cite[Lemma 1.1]{pee} has shown that if
$A_{0}\cap A_{1}$ is dense in $A_{0}$ then, with norm equivalence, we have%
\begin{equation}
\vec{A}_{0,1}^{J}=A_{0}. \label{j1}%
\end{equation}
Actually, for mutually closed pairs, (\ref{j1}) is essentially an easy
consequence of the strong form of the fundamental lemma of interpolation
theory (cf. Section \ref{sec:peetre} below). Moreover, applying (\ref{j1}) to
the pair $(A_{1},A_{0}),$ it follows that if $A_{0}\cap A_{1}$ is dense in
$A_{1},$%
\begin{equation}
\vec{A}_{1,1}^{J}=A_{1}. \label{j2}%
\end{equation}
In the remaining cases, we have\footnote{In fact, there is a simple connection
between (\ref{nueva}) and (\ref{nueva'}): For example, by \cite[see proof of
Theorem 3.7.1 part 3, page 54]{BL},
\[
\left(  \vec{A}_{0,q}^{J}\right)  ^{\ast}\subset\vec{A}_{1,q^{\prime}}^{\ast
K},
\]
and thus by (\ref{nueva'}), $\left(  \vec{A}_{0,q}^{J}\right)  ^{\ast}=\{0\},$
if $1<q\leq\infty.$}%
\begin{equation}
\vec{A}_{j,q}^{J}=\{0\},j=0,1,1<q\leq\infty. \label{nueva}%
\end{equation}

On the other hand, it is possible to \textbf{modify} the Lions-Peetre
constructions in such a way that, for the limiting values of the parameter
$\theta,$ the resulting spaces are non-trivial. Indeed, such modifications
have turned out to be useful since a number of spaces of interest in analysis
can be identified in this fashion (cf. \cite{GM}, and Sections \ref{sec:back}
and \ref{sec:scha} below). Moreover, the resulting \textquotedblleft limiting
spaces" appear naturally in extrapolation theory \cite{JM91}. Indeed, one of
the main purposes of this paper is to explicitly identify and characterize
limiting spaces as extrapolation spaces, and use these representations to
prove new qualitative results for limiting spaces, including reiteration theorems.

One of the first modifications of the Lions-Peetre scale was recorded in
\cite{GM}, where the case of \textquotedblleft ordered pairs" of spaces was
discussed. Let us say that a pair $\vec{A}$ is ordered\footnote{In this paper
when considering ordered pairs we shall assume, without loss of generality,
that the embedding $A_{1}\subset A_{0}$ has norm $1.$} if $A_{1}\subset
A_{0}.$ Then, for any element $f\in A_{0}$ its $K-$functional\footnote{We
refer to Section \ref{sec:real} for the definition as well as for background
information on interpolation and extrapolation theory.}, $K(t,f;\vec{A}),$
will be constant for $t>1.$ This motivated the following definition (cf.
\cite{GM}):

\begin{definition}
\label{def:order}Let $\vec{A}$ be an ordered pair$,$ and let $0\leq\theta
\leq1,0<q\leq\infty.$ We define%
\[
\langle\vec{A}\rangle_{\theta,q}^{K}=\{f\in A_{0}:\left\Vert f\right\Vert
_{\langle\vec{A}\rangle_{\theta,q}^{K}}<\infty\},
\]
where
\begin{equation}
\left\Vert f\right\Vert _{\langle\vec{A}\rangle_{\theta,q}^{K}}=\left\{
\begin{array}
[c]{cc}%
\{\int_{0}^{1}\left[  s^{-\theta}K(s,f;\vec{A})\right]  ^{q}\frac{ds}%
{s}\}^{1/q}, & q<\infty,\\
\sup_{0<s<1}s^{-\theta}K(s,f;\vec{A}), & q=\infty.
\end{array}
\right.  \label{defX}%
\end{equation}

\end{definition}

As shown in \cite{GM}, this construction gives non-trivial spaces for the
limiting value $\theta=0$ and all $q>0.$ For example, if $\Omega$ is a
probability space then $(L^{1}(\Omega),L^{\infty}(\Omega))$ is an ordered
pair, and we have (cf. \cite{BE}, \cite{GM})%
\begin{equation}
\langle L^{1}(\Omega),L^{\infty}(\Omega)\rangle_{0,1}^{K}=L(LogL)(\Omega
)\text{.} \label{sum0}%
\end{equation}
Another example is provided by the Dini spaces defined on a smooth domain
$\Omega\subset$ $R^{n}$ (cf. \cite{GM}, \cite[Theorem 1, and the paragraph
that follows it, in page 120]{scherer}){,}%
\[
(\mathring{W}_{p}^{1}(\Omega),L^{p}(\Omega))_{0,1}^{K}=\{f:\int_{0}^{1}%
w_{p,f}(s)\frac{ds}{s}<\infty\},
\]
where {$\mathring{W}_{p}^{1}(\Omega)$ is the usual homogeneous Sobolev space
(cf. \cite{bs}) and}%
\[
w_{p,f}(s):=\sup_{\left\vert h\right\vert \leq s}\left\Vert \left(
f(\cdot+h)-f(\cdot)\right)  \chi_{\{x\in\Omega:x+h\in\Omega\}}\right\Vert
_{L^{p}}%
\]
is the $p$-modulus of continuity of $f$.

Moreover, this modification of the Lions-Peetre scale is consistent in the
sense that when the pair $\vec{A}$ is ordered, and the parameters are in the
classical range, $0<\theta<1,0<q\leq\infty,$ we have%
\[
\langle\vec{A}\rangle_{\theta,q}^{K}=\vec{A}_{\theta,q}^{K}.
\]
More precisely, for ordered pairs and $q\ge1$ we have (cf. Section
\ref{sec:normequivalence}, Proposition \ref{Aprop1} below or \cite[Lemma
3]{Mlim}){%
\begin{equation}
\left\Vert a\right\Vert _{\langle\vec{A}\rangle_{\theta,q}^{K}}\leq\left\Vert
a\right\Vert _{\vec{A}_{\theta,q}^{K}}\leq\left[  1+(1-\theta)^{1/q}{\theta
}^{-1/q}\right]  \left\Vert a\right\Vert _{\langle\vec{A}\rangle_{\theta
,q}^{K}}. \label{equivK}%
\end{equation}
}

Furthermore, let us note that when the pair is ordered, $A_{0}+A_{1}=A_{0}.$
Therefore, the $\langle\vec{A}\rangle_{\theta,q}^{K}$ spaces, $0\leq\theta
\leq1,0<q\leq\infty$, \textbf{can also be defined consistently for arbitrary
Banach pairs }simply by keeping (\ref{defX}), and letting%

\begin{equation}
\langle\vec{A}\rangle_{\theta,q}^{K}=\{f\in A_{0}+A_{1}:\left\Vert
f\right\Vert _{\langle\vec{A}\rangle_{\theta,q}^{K}}<\infty\}. \label{def2}%
\end{equation}

The corresponding $\langle\vec{A}\rangle_{\theta,q}^{J}$ spaces are defined in
an analogous fashion (cf. \cite{JM91} and Section \ref{sec:back} below), and
consist of those elements $f\in A_{0}+A_{1}$ that can be represented by
integrals of the form\footnote{Recall that in the classical Lions-Peetre
theory the elements in $J$-spaces are represented by integrals $f=\int%
_{0}^{\infty}u(s)\frac{ds}{s}.$ In the case of ordered pairs we don't lose
information if we consider only integrals {of functions defined} on $(0,1)$.
See Section \ref{sec:back} below.} $f=\int_{0}^{1}u(s)\frac{ds}{s}$ in
$A_{0}+A_{1},$ where $u:(0,1)\rightarrow A_{0}\cap A_{1}$ with $\int_{0}%
^{1}\left[  s^{-\theta}J(s,u(s);\vec{A})\right]  ^{q}\frac{ds}{s}<\infty;$
endowed with the corresponding quotient norm. It is known, and easy to see
(cf. \cite{JM91} and Section \ref{sec:back}), that for \textbf{ordered pairs},
and $0<\theta<1$, $1\leq q\leq\infty,$ these spaces coincide with the
classical $J-$spaces $\vec{A}_{\theta,q}^{J}$. In fact, we have (cf. Section
\ref{sec:normequivalence}, Proposition \ref{Aprop2} below)
\begin{equation}
\left\Vert a\right\Vert _{\vec{A}_{\theta,q}^{J}}\leq\left\Vert a\right\Vert
_{\langle\vec{A}\rangle_{\theta,q}^{J}}\leq\left(  1+\frac{4}{\log2}%
[(1-\theta)q^{\prime}]^{-1/q^{\prime}}\right)  \left\Vert a\right\Vert
_{\vec{A}_{\theta,q}^{J}}, \label{equiv}%
\end{equation}
where $1/q+1/q^{\prime}=1$.

At this point one can pursue different generalizations, e.g., through the use
of more general weights\footnote{In fact, the use of more general weights
cannot be avoided if we wish to extend the fundamental results of the
classical Lions-Peetre theory to limiting spaces (cf. (\ref{equivJ}) below).},
more general norms, etc. Indeed, the construction of limiting spaces has
recently received considerable attention (as a small sample of recent
contributions on limiting spaces we mention
\cite{Cobos1,Cobos2,CF-CKU,Cobos-M,Cobos-T,Cobos-S,Cobos-S2,Cobos-K,Cobos-Segurado,Cobos-Segurado2,Cobos-D,Cobos-D2,Cobos-D3}%
, as well as the references therein). However, it is worthwhile to emphasize
that, in all the papers listed above, only some special classes of limiting
spaces were studied, and only fragments of their corresponding theory were
considered. In contrast, in this paper we strive to give a complete
characterization of the limiting interpolation spaces associated with the real
method. A key point of our approach is the clarification of the connection
between the class of limiting spaces and extrapolation theory. In particular,
we completely describe \textquotedblleft limiting spaces" as extrapolation
spaces. For this purpose the usual tools (e.g., Boyd indices or the
boundedness of Hardy type operators) are not appropriate. Instead, our
characterization hinges upon the boundedness properties of{\ some simple
operators (e.g., $f\mapsto f(t^{2})/t,$ or $f\mapsto f(t^{1/2})$ )} acting on
the underlying lattices that are used to control the $K$- and $J$-functionals.
Reiteration formulae, extending in a meaningful way the classical results of
Holmstedt to limiting spaces, are also proved and characterized in this
fashion. In short, {we gain a new understanding on the limiting spaces for the
classical Lions-Peetre scale that goes well beyond mere considerations of
numerical parameters, and use these insights to formulate new results that
would be more difficult to guess and prove otherwise. Moreover, we believe
that the methods developed here can be used to describe and study limiting
spaces for other scales and methods of interpolation. Still another
consequence of our efforts is that we acquire extrapolation methods to prove
inequalities that involve limiting spaces.}

To motivate and illustrate our development in this paper it will be
instructive to discuss here how general ideas of extrapolation theory apply in
the special case of the $\langle\vec{A}\rangle_{0,1}^{K}$ spaces and, in the
process, allow us to re-formulate some of the fundamental theorems of the
Lions-Peetre classical theory to include limiting spaces. We believe that the
short review that follows could useful to the reader, since the relevant tools
of the extrapolation theory of \cite{JM91} do not seem to have been utilized
in the literature of limiting spaces.

It seems appropriate to start our discussion about limiting spaces with the
very issue of taking limits of norms of real interpolation spaces. Besides the
connection with extrapolation theory, that we shall develop in somewhat more
detail below (cf. Section \ref{sec:apllim}), this topic is of independent
interest in other areas of analysis (e.g., in the theory of Sobolev
inequalities cf. \cite{bbm1}, \cite{leo}, \cite[see page 244]{PO}, \cite{POS},
and the references therein). The basic results are very simple to state. For
definiteness, below we only display the limits of the Lions-Peetre norms when
$\theta\rightarrow0$ . We first normalize the real interpolation spaces
$\vec{A}_{\theta,q}^{K}$ by means of multiplying the corresponding norms by%
\begin{equation}
c_{\theta,q}=(q\theta(1-\theta))^{\frac{1}{q}}, \label{constant}%
\end{equation}
and denote the corresponding normalized spaces by $\vec{A}_{\theta
,q}^{K\blacktriangleleft}.$ In an analogous fashion we normalize the $\vec
{A}_{\theta,q}^{J\blacktriangleleft}$ spaces; here the corresponding constants
are given by $c_{\theta,q}^{J}=(q^{\prime}\theta(1-\theta))^{-1/q^{\prime}}$,
$1/q^{\prime}+1/q=1$ (cf. (\ref{cj}) in Section \ref{sec:real} below). Then,
we get\footnote{Actually the results hold for interpolation functors that are
of exact type $\theta$ (cf. \cite{Mlim}). For example, by (\ref{laj}) applied
to $\rho(t)=t^{\theta},$ the $J-$method $\vec{A}_{\theta,q}%
^{J\blacktriangleleft}$ is exact of order $\theta$, and moreover, we have that
for all $t>0,$%
\[
t^{-\theta}K(t,f;\vec{A})\leq\left\Vert f\right\Vert _{\vec{A}_{\theta
,q}^{J\blacktriangleleft}}\leq\left\Vert f\right\Vert _{A_{0}}^{1-\theta
}\left\Vert f\right\Vert _{A_{1}}^{\theta}.
\]
The corresponding limit in (\ref{lim1}) follows readily. One can also give a
similar proof of the corresponding limit for the $K-$method using \cite[Lemma
2, formula (31), page 244]{Mbmo}.} (cf. \cite{Mlim}) that for $f\in A_{0}\cap
A_{1}$ and $1\leq q<\infty$,
\begin{equation}
\lim_{\theta\rightarrow0}\left\Vert f\right\Vert _{\vec{A}_{\theta
,q}^{K\blacktriangleleft}}=\left\Vert f\right\Vert _{A_{0}}\;\text{ and
}\;\lim_{\theta\rightarrow0}\left\Vert f\right\Vert _{\vec{A}_{\theta
,q}^{J\blacktriangleleft}}=\left\Vert f\right\Vert _{A_{0}}. \label{lim1}%
\end{equation}

On the other hand, if $\vec{A}$ is an ordered pair, then for $f\in A_{1},$ we have

{%
\begin{equation}
\lim_{\theta\rightarrow0}\left\Vert f\right\Vert _{\langle\vec{A}%
\rangle_{\theta,q}^{K}}=\left\Vert f\right\Vert _{\langle\vec{A}\rangle
_{0,q}^{K}}\;\text{ and }\;\Vert a\Vert_{A_{0}}\leq\lim_{\theta\rightarrow
0}\left\Vert a\right\Vert _{\langle\vec{A}\rangle_{\theta,1}^{J}}\leq2\Vert
a\Vert_{A_{0}} \label{lim2}%
\end{equation}
(the first indicated limit follows directly from the definitions while the
inequality for the second limit is an immediate consequence of (\ref{j1}) and
(\ref{equiv}) above).} In Section~\ref{sec:apllim} (cf. Theorem \ref{teoyano})
we show how these calculations can be combined with the strong form of the
fundamental lemma of interpolation theory to prove extrapolation theorems, and
in Section \ref{sec:peelim} we show how extrapolation methods provide an
extension of (\ref{lim1}) and (\ref{lim2}), as well as add another twist to
the Bourgain-Brezis-Mironescu-Maz'ya-Shaposhnikova limit theorems for Sobolev norms.

A cornerstone result in the Lions-Peetre theory is the equivalence between the
$K$- and $J$-interpolation spaces that takes the form:%
\begin{equation}
\vec{A}_{\theta,q}^{K}=\vec{A}_{\theta,q}^{J},0<\theta<1,1\leq q\leq\infty.
\label{k/j}%
\end{equation}
As a consequence of our previous discussion we see that the equivalence
(\ref{k/j}) does not hold in the limiting cases, and we need to go beyond
power weights in order to find the correct results (cf. \cite[page 46]{JM91}
and Section \ref{sec:equivalence} below). For example, the limiting space
$\langle\vec{A}\rangle_{0,1}^{K}$ can be described as a $J-$space, but in this
case the corresponding characterization requires the use of $J-$spaces with
logarithmic weights
\begin{equation}
\langle\vec{A}\rangle_{0,1}^{K}=\{f:f=\int_{0}^{1}u(s)\frac{ds}{s}\text{ with
}\int_{0}^{1}J(s,u(s);\vec{A})\left\vert \log s\right\vert \frac{ds}{s}%
<\infty\}. \label{equivJ}%
\end{equation}
For a far-reaching generalization of this result, and its connections with the
sum extrapolation functor of Jawerth-Milman and the strong form of the
fundamental lemma of interpolation theory, we refer to
\cite[(3.9) page 25]{JM91}, and Section \ref{sec:equivalence} below. These
results are also closely connected with generalizations of the classical
interpolation inequalities of the form%
\begin{equation}
\left\Vert f\right\Vert _{\vec{A}_{\theta,q}^{K}}\leq c(\theta,q)\left\Vert
f\right\Vert _{A_{0}}^{1-\theta}\left\Vert f\right\Vert _{A_{1}}^{\theta
},0<\theta<1,1\leq q\leq\infty. \label{prova}%
\end{equation}
For example, for $\theta=0$ we must modify (\ref{prova}) as follows (cf.
\cite[page 58-59]{JM91} and Example \ref{exa:bre} below)%
\begin{equation}
\left\Vert f\right\Vert _{\langle\vec{A}\rangle_{0,1}^{K}}\leq c\left\Vert
f\right\Vert _{A_{0}}\log\left(  e+\frac{\left\Vert f\right\Vert _{A_{1}}%
}{\left\Vert f\right\Vert _{A_{0}}}\right)  . \label{prova1}%
\end{equation}
Underlying (\ref{prova1}) is the limiting equivalence of the $K$- and
$J$-methods given by (\ref{equivJ}) and the fact that, for quasi-concave
functions\footnote{Indeed, variants of (\ref{prova1}) hold for extrapolation
spaces (cf. \cite{JM91}).} $\rho,$ we have \cite[(5.5.3) page 58]{JM91}%
\begin{equation}
\inf_{t>0}\left\{  \frac{J(t,f;\vec{A})}{\rho(t)}\right\}  =\frac{\left\Vert
f\right\Vert _{A_{0}}}{\rho(\frac{\left\Vert f\right\Vert _{A_{0}}}{\left\Vert
f\right\Vert _{A_{1}}})}. \label{laj}%
\end{equation}

Another key property of the classical Lions-Peetre scale is the reiteration
theorem. In particular, let us consider the following reiteration formula (cf.
\cite{BL}),%
\begin{equation}
(A_{0},\vec{A}_{\theta_{1},q_{1}}^{K})_{\theta,q}^{K}=\vec{A}_{\theta
\theta_{1},q}^{K},\text{ }0<\theta_{1}<1,0<\theta<1. \label{intro0}%
\end{equation}
From our previous discussion it follows that (\ref{intro0}) gives a trivial
result if $\theta=0$. On the other hand, if $\vec{A}$ is an \textbf{ordered
pair}, we have (cf. \cite{GM}, \cite{JM91}) that, for all $\theta_{1}%
\in(0,1),1\leq q\leq\infty,$
\begin{equation}
\langle A_{0},\vec{A}_{\theta_{1},q}^{K}\rangle_{0,1}^{K}=\langle\vec
{A}\rangle_{0,1}^{K}. \label{intr1}%
\end{equation}
Using the modified $K-$spaces, we can formally obtain (\ref{intr1}) by letting
$\theta=0$ in (\ref{intro0}). However, the reader should also notice that
(\ref{intr1}) exhibits the somewhat surprising fact that the resulting space
on the right hand side does not depend on $\theta_{1}!$

Once again, (\ref{intr1}) can be understood using the ${\displaystyle\sum}%
-$method of extrapolation of \cite[page 48]{JM91} (cf. Section
\ref{sec:extrapol} for the definitions). To see this in more detail we recall
that the space $\langle\vec{A}\rangle_{0,1}^{K}$ can be described via the
${\displaystyle\sum}-$method as follows (cf. \cite{JM91}):
For any $q\in\lbrack1,\infty]$,%
\begin{equation}
\langle\vec{A}\rangle_{0,1}^{K}=%
{\displaystyle\sum\limits_{\theta}}
\frac{1}{\theta}\vec{A}_{\theta,q}^{K\blacktriangleleft}. \label{sum1}%
\end{equation}

From this point of view, (\ref{intr1}) simply reflects the following
statement: If the pair $\vec{A}$ is \textbf{ordered} then\footnote{Informally
the idea is that if $\theta_{0}<\theta_{1},$ say, we can split%
\[
{\displaystyle\sum\limits_{\theta<\theta_{1}}}\frac{1}{\theta}(A_{0}%
,A_{1})_{\theta,q}^{K\blacktriangleleft}={\displaystyle\sum\limits_{\theta
<\theta_{0}}}\frac{1}{\theta}(A_{0},A_{1})_{\theta,q}^{K\blacktriangleleft
}+{\displaystyle\sum\limits_{\theta_{0}<\theta<\theta_{1}}}\frac{1}{\theta
}(A_{0},A_{1})_{\theta,q}^{K\blacktriangleleft}%
\]
and then on the right hand side use the fact that we are working with an
ordered pair in order to incorporate the second term to the first term.} for
any $\theta_{i}\in(0,1),i=0,1,$ (cf. \cite{JM91})$,$%
\[%
{\displaystyle\sum\limits_{\theta<\theta_{0}}}
\frac{1}{\theta}\vec{A}_{\theta,q}^{K\blacktriangleleft}=%
{\displaystyle\sum\limits_{\theta<\theta_{1}}}
\frac{1}{\theta}\vec{A}_{\theta,q}^{K\blacktriangleleft}.
\]
Moreover, the characterization (\ref{sum1}) holds even if the pair $\vec{A}$
is \textbf{not} ordered (cf. (\ref{def2}) and \cite{JM91}):%
\begin{align}
{\displaystyle\sum\limits_{\theta}}\frac{1}{\theta}\vec{A}_{\theta
,q}^{K\blacktriangleleft}  &  =\{f:f\in A_{0}+A_{1}\text{ s.t. }%
{\displaystyle\int\limits_{0}^{\text{ }1}}K(s,f;\vec{A})\frac{ds}{s}%
<\infty\}\nonumber\\
&  =\langle\vec{A}\rangle_{0,1}^{K}. \label{sum3}%
\end{align}
Furthermore, in either case we have
\[
\left\Vert f\right\Vert _{%
{\displaystyle\sum\limits_{\theta}}
\frac{1}{\theta}\vec{A} _{\theta,q}^{K\blacktriangleleft}}\asymp
{\displaystyle\int\limits_{0}^{\text{ }1}}K(s,f;\vec{A})\frac{ds}{s}.
\]

\begin{example}
\label{sumexample}Let $(\Omega,\mu)$ be a measure space. Then,%
\[
{\displaystyle\sum\limits_{\theta}} \frac{1}{\theta}(L^{1}(\Omega),L^{\infty
}(\Omega))_{\theta,q}^{K\blacktriangleleft}=L(LogL)(\Omega)+L^{\infty}%
(\Omega).
\]

For details we refer to Section \ref{sec:vector}, Example \ref{sumsum} below.
Note that if $\mu(\Omega)<\infty,$ then $L^{\infty}(\Omega)\subset
L(LogL)(\Omega),$ and we recover (\ref{sum0}).
\end{example}

We can now explain in more detail the results of this paper. First note that
limiting spaces are themselves interpolation spaces so in this paper we shall
be more generally concerned with the problem of representing real
interpolation spaces as extrapolation spaces, as well as with the validity of
suitable versions of the reiteration formula (\ref{intr1}). Since the latter
formula requires a suitable monotonicity condition, in this paper we shall be
working mainly with ordered pairs of Banach spaces. Next, instead of
considering separately the different available methods of extrapolation we
shall formulate our results using two basic general methods of extrapolation
introduced in \cite{As2003} (cf. Section \ref{sec:extrapol} below). These
extrapolation constructions indeed contain all the known methods of
extrapolation (e.g., the $\Sigma$ and $\Delta$ methods, and their
corresponding $p-$variants). As it often happens in mathematics the added
generality clarifies the issues and leads to a streamlined presentation. In
particular, the extrapolation methods of \cite{As2003} (see also \cite{AL2009}
and \cite{AL}) utilize functional parameters and this led us to formulate our
characterizations in terms of specific properties of the functional parameters
involved. This is a particularly felicitous situation for our development
since the traditional tools (e.g., Boyd indices or the boundedness of Hardy
operators) do not seem appropriate for our goals, instead we formulate our
conditions on the behavior of some simple operators acting on the functional parameters.

Let $\vec{A}$ be an ordered pair and let us consider spaces\footnote{See
(\ref{fspace}) below for the definition.} of the form {$\langle\vec{A}%
\rangle_{F}^{K},$ }where $F$ is a suitable functional parameter for the real
method{\ on $[0,1]$}. We ask: under what conditions on $F$ can we represent
{$\langle\vec{A}\rangle_{F}^{K}$} as an extrapolation space? In Theorem
\ref{StrExtrAbst} below we show that if $F$ has the property that the
operation
\begin{equation}
f(t)\rightarrow f(t^{2})/t\text{ is bounded on }F,\label{cond}%
\end{equation}
then the interpolation space {$\langle\vec{A}\rangle_{F}^{K}$}$,$ can be
represented as a generalized extrapolation space of the form%
\begin{equation}
\Vert a\Vert_{{\langle\vec{A}\rangle_{F}^{K}}}\asymp\left\Vert t\cdot\Vert
a\Vert_{\langle\vec{A}\rangle_{\theta(t),q}^{K\blacktriangleleft}}\right\Vert
_{F},\text{ where }\theta(t)=1+\frac{1}{2\log\frac{t}{e}}.\label{repres1}%
\end{equation}
Moreover, Theorem~\ref{TH2} shows that, if $F$ is an interpolation Banach
lattice between the spaces ${L}^{\infty}$ and ${L}^{\infty}(1/t)$, then
condition (\ref{cond}) is equivalent to the following peculiar reiteration
formula%
\[
{\langle}\vec{A}_{\theta,q},A_{1}\rangle_{F}^{K}={\langle\vec{A}\rangle
_{F}^{K}}.
\]
A similar result also holds for limiting interpolation spaces which are close
to the larger "end" of an ordered Banach pair. Namely, the following extension
of (\ref{intr1}) holds
\begin{equation}
{\langle}A_{0},\vec{A}_{\theta,q}\rangle_{F}^{K}={\langle\vec{A}\rangle
_{F}^{K}}\label{condmod}%
\end{equation}
if and only if the operator $Rf(t)=f(t^{1/2})$ is bounded on $F$%
\footnote{Examples include (see Remark\ref{non-interpolation})
\[
\left\Vert f\right\Vert _{F}=\int_{0}^{1}\left\vert f(s)\right\vert \frac
{ds}{s},\left\Vert f\right\Vert _{F}=\sup_{s\in(0,1)}\left\vert
f(s)\right\vert .
\]
} (see Theorem \ref{limiting reiteration}).

If we further assume that $\vec{A}$ is Gagliardo complete then we can show
that, more generally, the representation (\ref{repres1}) holds if we replace
the real method by any family $\{I_{\theta}\}_{\theta\in(0,1)}$ of exact
interpolation functors $I_{\theta}$ with characteristic functions $t^{\theta}
$ (cf. Theorem \ref{StrExtrAbst2} below)$.$ In particular, it follows that if
$F$ satisfies (\ref{cond}) then {$\langle\vec{A}\rangle_{F}^{K}$ } can be also
represented as an extrapolation space for the scale of complex interpolation
spaces $[A_{0},A_{1}]_{\theta}$, in the sense that
\[
\Vert a\Vert_{{\langle\vec{A}\rangle_{F}^{K}}}\asymp\left\Vert t\Vert
a\Vert_{\lbrack A_{0},A_{1}]_{\theta(t)}}\right\Vert _{F}\text{,}%
\]
where $[\cdot,\cdot]_{\theta}$ denotes the complex method of interpolation
(cf. \cite{bs} for background on the complex method of interpolation).

As is well known (cf. \cite{BL}), Holmstedt's formula is an important tool to
prove reiteration theorems in the classical Lions-Peetre theory. In our
setting the relevant Holmstedt formula takes the following form: Suppose that
$F$ satisfies (\ref{cond}), then for every pair ($A_{0},A_{1})$ it holds, with
constants independent of $a\in\langle\vec{A}\rangle_{F}^{K}+\tilde{A}_{1}$,
$s>0$ and $q\in\lbrack1,\infty]$,%
\[
K(s,a;{\langle\vec{A}\rangle_{F}^{K}},\tilde{A}_{1})\asymp K(s,t\cdot\Vert
a\Vert_{\langle\vec{A}\rangle_{\theta(t),q}^{K\blacktriangleleft}}%
;F,{L}^{\infty}(1/t)),
\]
where as above $\theta(t)=1+\frac{1}{2\log(t/e)}$ (cf. Theorem
\ref{ExtrMinLat} below).

Applications to the theory of symmetrically normed operator ideals are given
in Section \ref{sec:scha}. In particular, using an extrapolation description
of the limiting Schatten-von Neumann operator spaces (cf. Theorem~\ref{Ideals}%
), we obtain a generalization of Matsaev's well-known result on the behavior
of the real and the imaginary components of a Volterra operator (cf.
Theorem~\ref{Matcaev}). In Section~\ref{sec:Grand} we show how
Theorem~\ref{StrExtrAbst} can be combined with {easy to prove rearrangement
inequalities }to give a streamlined proof of the Fiorenza-Karadzhov
description of Grand Lebesgue spaces $L^{p)}$, $p>1$, {as extrapolation
spaces} (see Theorem~\ref{Fiorenza-Karadzhov} below). In Section
\ref{sec:vector} we show how extrapolation theory combined with the formula
for the $K-$functional for vector valued $L^{p}$ spaces obtained by Pisier
\cite{PI} allows us to prove an extrapolation theorem for vector valued spaces
that extends Yano's classical extrapolation theorem. In Section \ref{sec:last}
we complete the proofs of some auxiliary results and discuss further results
and applications. We have aimed to make the paper accessible for readers who
may not be familiar with extrapolation theory. We refer to Section
\ref{sec:back} for background information about interpolation and
extrapolation theory.

\begin{acknowledgement}
We are very grateful to the referees for their constructive criticism and very
helpful suggestions to improve the presentation of the paper.
\end{acknowledgement}

\section{Background and auxiliary results\label{sec:back}}

\subsection{Real Interpolation: basic definitions\label{sec:real}}

{We assume that the reader is familiar with elementary real interpolation
theory, e.g., as presented in \cite{BL}. We shall now give a brief summary of
relevant notions in order to fix the notation we use in this paper. }

Let $\vec{A}=(A_{0},A_{1})$ be a Banach pair$.$ The Peetre $K-$functional is
defined for $x\in A_{0}+A_{1},t>0,$ by%
\[
K(t,x;\vec{A})=\inf\{\left\Vert a_{0}\right\Vert _{A_{0}}+t\left\Vert
a_{1}\right\Vert _{A_{1}}:x=a_{0}+a_{1},a_{i}\in A_{i},i=0,1\}.
\]
The corresponding dual construction is the $J-$functional defined for $x\in
A_{0}\cap A_{1},t>0,$ by%
\[
J(t,x;\vec{A})=\max\{\left\Vert x\right\Vert _{A_{0}},t\left\Vert x\right\Vert
_{A_{1}}\}.
\]
Let $0<\theta<1,$ \ and $1\leq q\leq\infty$. Let
\[
\Phi_{\theta,q}(f)=\left\{
\begin{array}
[c]{cc}%
\left\{  \int_{0}^{\infty}\left(  s^{-\theta}|f(s)|\right)  ^{q}\frac{ds}%
{s}\right\}  ^{1/q} & if\text{ }q<\infty\\
\sup_{s>0}\left\{  s^{-\theta}|f(s)|\right\}  & if\text{ }q=\infty.
\end{array}
\right.
\]
and let $\Phi_{\theta,q}$ be the function space of measurable functions on
$(0,\infty)$ such that $\Phi_{\theta,q}(f)<\infty.$ The space $\vec{A}%
_{\theta,q}^{K}$ consists of the elements $x\in A_{0}+A_{1},$ such that $\Vert
x\Vert_{\vec{A}_{\theta,q}^{K}}<\infty,$ where
\[
\Vert x\Vert_{\vec{A}_{\theta,q}^{K}}:=\Phi_{\theta,q}(K(s,x;\vec{A})).
\]
The corresponding $\vec{A}_{\theta,q}^{J}$ spaces consist of all $x\in
A_{0}+A_{1}$ that can be represented as
\begin{equation}
x=\int_{0}^{\infty}u(t)\;\frac{dt}{t}\text{ in }A_{0}+A_{1},
\label{represetation}%
\end{equation}
for some strongly measurable function $u(t)$ defined on $(0,\infty)$ with
values in $A_{0}\cap A_{1},$ and such that $\ \Phi_{\theta,q}(J(s,u(s);\vec
{A}))<\infty$. Then we let%
\[
\Vert x\Vert_{\vec{A}_{\theta,q}^{J}}=\inf\{\Phi_{\theta,q}(J(s,u(s);\vec
{A})):x=\int_{0}^{\infty}u(t)\frac{dt}{t}\}.
\]
For our development in this paper it is convenient to normalize the
interpolation norms as follows. We let%
\begin{equation}
\Vert x\Vert_{\vec{A}_{\theta,q}^{K\blacktriangleleft}}:=(q\theta
(1-\theta))^{\frac{1}{q}}\Vert x\Vert_{\vec{A}_{\theta,q}^{K}}, \label{ck}%
\end{equation}
with the convention that $(q\theta(1-\theta))^{\frac{1}{q}}=1$ when
$q=\infty.$ Then $\vec{A}_{\theta,q}^{K\blacktriangleleft}$ is equal to the
set of elements of $\vec{A}_{\theta,q}^{K}$ provided with the normalized norm
$\Vert\cdot\Vert_{\vec{A}_{\theta,q}^{K\blacktriangleleft}}.$ Likewise, the
spaces $\vec{A}_{\theta,q}^{J\blacktriangleleft}$ are equal, as sets, to the
$\vec{A}_{\theta,q}^{J}$ spaces, but are provided with the
normalized\footnote{The normalization constants given by (\ref{ck}) (resp.
\ref{cj}) are useful when comparing norms and taking limits (cf. (\ref{lim1})
above) and moreover make the corresponding interpolation functors $\vec
{A}_{\theta,q}^{K\blacktriangleleft}$ (resp. $\vec{A}_{\theta,q}%
^{J\blacktriangleleft}$) exact of exponent $\theta$ (cf. footnote (10)
above).} norms $\Vert\cdot\Vert_{_{\vec{A}_{\theta,q}^{J\blacktriangleleft}}}$
given by,
\begin{equation}
\Vert x\Vert_{_{\vec{A}_{\theta,q}^{J\blacktriangleleft}}}=(q^{\prime}%
\theta(1-\theta))^{-1/q^{\prime}}\Vert x\Vert_{_{\vec{A}_{\theta,q}^{J}}},
\label{cj}%
\end{equation}
where if $q=1$ we set $(q^{\prime}\theta(1-\theta))^{-1/q^{\prime}}=1$.

The spaces $\langle\vec{A}\rangle_{\theta,q}^{K}$, $\theta\in\lbrack
0,1],q\in\lbrack1,\infty]$, introduced in Definition \ref{def:order} above,
consist of the elements $x$ in $A_{0}+A_{1}$ with $\Vert x\Vert_{\langle
\vec{A}\rangle_{\theta,q}^{K}}<\infty,$ where
\[
\Vert x\Vert_{\langle\vec{A}\rangle_{\theta,q}^{K}}:=\Phi_{\theta
,q}(K(s,x;\vec{A})\chi_{(0,1)}(s)).
\]
The corresponding $\langle\vec{A}\rangle_{\theta,q}^{J}$ spaces consist of all
the elements $x$ of $A_{0}+A_{1}$ that can be represented by%
\[
x=\int_{0}^{1}u(t)\frac{dt}{t},
\]
for some strongly measurable function $u(t)$ defined on $(0,1),$ with values
in $A_{0}\cap A_{1},$ such that $\ \Phi_{\theta,q}(J(s,u(s);\vec{A}%
)\chi_{(0,1)}(s))<\infty.$ We let
\[
\Vert x\Vert_{\langle\vec{A}\rangle_{\theta,q}^{J}}=\inf\{\Phi_{\theta
,q}(J(s,u(s);\vec{A})\chi_{(0,1)}(s)):x=\int_{0}^{1}u(t)\frac{dt}{t}\}.
\]
Furthermore, in analogy with (\ref{ck}) and (\ref{cj}) we formally define the
spaces $\langle\vec{A}\rangle_{\theta,q}^{K\blacktriangleleft}$ (resp.
$\langle\vec{A}\rangle_{\theta,q}^{J\blacktriangleleft})$ using the
normalizations
\[
\Vert x\Vert_{\langle\vec{A}\rangle_{\theta,q}^{K\blacktriangleleft}%
}:=(q\theta(1-\theta))^{\frac{1}{q}}\Vert x\Vert_{\langle\vec{A}%
\rangle_{\theta,q}^{K}},\text{ resp. }\Vert x\Vert_{\langle\vec{A}%
\rangle_{\theta,q}^{J\blacktriangleleft}}:=(q^{\prime}\theta(1-\theta
))^{-1/q^{\prime}}\Vert x\Vert_{\langle\vec{A}\rangle_{\theta,q}^{J}}.
\]

{More generally, \textit{mutatis mutandi,} we consider the interpolation
spaces $\vec{A}_{F}^{K}$ (resp. $\vec{A}_{F}^{J}$), obtained by replacing
$\Phi_{\theta,q}$ with a Banach lattice $F$ of functions on $((0,\infty
),\frac{ds}{s})$ satisfying the continuous embeddings $L^{\infty}\cap
L^{\infty}(1/s)\subset F\subset L^{\infty}+L^{\infty}(1/s)$ (resp. $L^{1}\cap
L^{1}(1/s)\subset F\subset L^{1}+L^{1}(1/s)$). Here, and in what follows, we
use the notation $\Vert f\Vert_{L^{p}(1/s)}:=\Vert f(s)/s\Vert_{L^{p}}$,
$1\leq p\leq\infty$. Banach lattices with the above properties will be called
\textit{parameters for the $K$-method (resp. $J$-method) of interpolation on}
$(0,\infty)$. More precisely, $\vec{A}_{F}^{K}$ is defined by}
\begin{equation}
\vec{A}_{F}^{K}:=\{x\in A_{0}+A_{1}:\left\Vert x\right\Vert _{\vec{A}_{F}^{K}%
}:=\left\Vert K(s,x;\vec{A})\right\Vert _{F}<\infty\}. \label{fspace}%
\end{equation}
Likewise, we let
\[
\vec{A}_{F}^{J}:=\{x\in A_{0}+A_{1}:\left\Vert x\right\Vert _{\vec{A}_{F}^{J}%
}<\infty\},
\]
where
\begin{align*}
\Vert x\Vert_{\vec{A}_{F}^{J}}  &  :=\inf\{\left\Vert J(s,u(s);\vec
{A})\right\Vert _{F}:x=\int_{0}^{\infty}u(s)\frac{ds}{s};\\
&  {\,}\text{where }u:(0,\infty)\rightarrow A_{0}\cap A_{1}\text{ is strongly
measurable}\}.
\end{align*}

For our purposes in this paper it is important also to consider the
modification of the above spaces that one obtains when the parameter space
$F$\textbf{\ is a Banach lattice of functions on } $(\mathbf{(0,1),\frac
{ds}{s}})$ {satisfying the continuous embeddings $L^{\infty}(1/s)(0,1)\subset
F\subset L^{\infty}(0,1)$ (resp. $L^{1}(1/s)(0,1)\subset F\subset L^{1}%
(0,1)$). We will refer to such Banach lattices as \textit{parameters for the
$K$-method (resp. $J$-method) of interpolation on} $(0,1)$.} The corresponding
definitions are formally the same:%
\[
\langle\vec{A}\rangle_{F}^{K}:=\{x\in A_{0}+A_{1}:\left\Vert x\right\Vert
_{\langle\vec{A}\rangle_{F}^{K}}:=\left\Vert K(s,x;\vec{A})\right\Vert
_{F}<\infty\},
\]
and likewise%
\[
\langle\vec{A}\rangle_{F}^{J}:=\{x\in A_{0}+A_{1}:\left\Vert x\right\Vert
_{\langle\vec{A}\rangle_{F}^{J}}<\infty\},
\]
where%
\begin{align*}
\left\Vert x\right\Vert _{\langle\vec{A}\rangle_{F}^{J}}:=  & \\
\inf_{x=\int_{0}^{1}u(t)\frac{dt}{t}}  &  \{\left\Vert J(s,u(s);\vec
{A})\right\Vert _{F}:\;\;u:(0,1)\rightarrow A_{0}\cap A_{1}\text{ is strongly
measurable}\}.
\end{align*}

We shall say that $\vec{A}$ is an \textit{ordered} Banach pair (or simply an
\textquotedblleft ordered pair") if $A_{1}\subset A_{0}.$ Moreover, to
simplify the discussion we shall always assume that the norm of the embedding
$A_{1}\subset A_{0}$ is less than or equal to one.{\ An increasing
non-negative function $f$ on $[0,1]$ is called \textit{quasi-concave,} if
$f(t)/t$ decreases.} In general, given two Banach spaces $A,B$, we shall write
$A\overset{C}{\subset}B$ to indicate that the norm of the embedding is bounded
by above by $C.$

\subsection{On the equivalence of interpolation norms on ordered
pairs\label{sec:normequivalence}}

In this section we explicitly compare the interpolation constructions that we
introduced in the previous section, on the class of ordered pairs of Banach
spaces. Since the results are important for our purposes in this paper, and do
not seem to be readily available in the literature, we provide full details,
including explicit computation of the constants involved in the norm inequalities.

\begin{proposition}
\label{Aprop1} Let $\vec{A}=(A_{0},A_{1})$ be an ordered pair. Then

(i) for every parameter $F$ for the $K$-method on $(0,\infty)$ we have, with
equivalence of norms,
\[
{\vec{A}_{F}^{K}}=\langle\vec{A}\rangle_{\tilde{F}}^{K},
\]
where $\tilde{F}$ is the sublattice of $F$ consisting of all functions $f$
such that $supp\,f\subset\lbrack0,1]$;

(ii) for all $0<\theta<1$, $1\leq q\leq\infty,$ we have
\[
\left\Vert a\right\Vert _{\langle\vec{A}\rangle_{\theta,q}^{K}}\leq\left\Vert
a\right\Vert _{\vec{A}_{\theta,q}^{K}}\leq\left[  1+(1-\theta)^{1/q}{\theta
}^{-1/q}\right]  \left\Vert a\right\Vert _{\langle\vec{A}\rangle_{\theta
,q}^{K}},
\]
where we let $(1-\theta)^{1/q}{\theta}^{-1/q}=1$ when $q=\infty.$
\end{proposition}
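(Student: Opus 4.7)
\medskip

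\noindent\textbf{Proof plan.} The whole argument hinges on a single structural observation for ordered pairs: since $A_1\hookrightarrow A_0$ with norm at most $1$, we have $\|a_1\|_{A_0}\le\|a_1\|_{A_1}$, so
$\|a_0\|_{A_0}+t\|a_1\|_{A_1}\ge \|a_0\|_{A_0}+\|a_1\|_{A_0}\ge\|a\|_{A_0}$ for all $t\ge 1$ and every decomposition $a=a_0+a_1$; taking $a_1=0$ gives the matching upper bound. Hence
\[
K(s,a;\vec A)=\|a\|_{A_0}\quad\text{for every }s\ge 1,
\]
and by concavity of $K(\cdot,a;\vec A)$, $K(s,a;\vec A)/s$ is nonincreasing on $(0,\infty)$, so in particular $K(s,a;\vec A)\ge s\|a\|_{A_0}$ for $s\in(0,1]$. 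These two facts are the only ingredients I will use.

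\medskip

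\noindent\emph{Part (i).} First I would verify that $\tilde F$ really is a parameter for the $K$-method on $(0,1)$: if $f$ is supported on $(0,1)$ with $\sup_{0<s<1}|f(s)|/s<\infty$, then $|f(s)|\le s\sup|f/s|\le \sup|f/s|$, so $f\in L^\infty\cap L^\infty(1/s)\subset F$ and $f\in\tilde F$; the reverse embedding $\tilde F\subset L^\infty(0,1)$ follows by writing any $f\in F$ as $g+h$ with $g\in L^\infty$, $h\in L^\infty(1/s)$, and noting $|h(s)|\le s\|h\|_{L^\infty(1/s)}\le\|h\|_{L^\infty(1/s)}$ on $(0,1)$. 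Next, using the structural observation, write
\[
K(\cdot,a;\vec A)=K(\cdot,a;\vec A)\chi_{(0,1)}+\|a\|_{A_0}\chi_{[1,\infty)}.
\]
The triangle inequality in $F$ yields
\[
\|a\|_{\vec A_F^K}\le \|K(\cdot,a;\vec A)\chi_{(0,1)}\|_F+\|a\|_{A_0}\|\chi_{[1,\infty)}\|_F,
\]
while the lattice property gives the trivial bound $\|K(\cdot,a;\vec A)\chi_{(0,1)}\|_F\le \|a\|_{\vec A_F^K}$. The only remaining task is to bound $\|a\|_{A_0}$ by $\|K(\cdot,a;\vec A)\chi_{(0,1)}\|_F$: using $K(s,a;\vec A)\ge s\|a\|_{A_0}$ for $s\in(0,1)$,
\[
\|K(\cdot,a;\vec A)\chi_{(0,1)}\|_F\ge \|a\|_{A_0}\,\|s\chi_{(0,1)}\|_F,
\]
and the positive quantities $c_F:=\|\chi_{[1,\infty)}\|_F$ and $c_F':=\|s\chi_{(0,1)}\|_F$ are finite (both functions lie in $L^\infty\cap L^\infty(1/s)\subset F$) and strictly positive (the lattice $F$ has no nontrivial norm zero elements). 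Combining gives the norm equivalence with constant $1+c_F/c_F'$.

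\medskip

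\noindent\emph{Part (ii).} This is really the quantitative version of Part (i) for the concrete parameter $F=\Phi_{\theta,q}$, so I would just carry out the two relevant constant computations. For $1\le q<\infty$ split
\[
\|a\|_{\vec A_{\theta,q}^K}^q=\int_0^1(s^{-\theta}K(s,a;\vec A))^q\frac{ds}{s}+\|a\|_{A_0}^q\int_1^\infty s^{-\theta q}\frac{ds}{s},
\]
whose second term equals $\|a\|_{A_0}^q/(\theta q)$. To bound $\|a\|_{A_0}$ one applies $K(s,a;\vec A)\ge s\|a\|_{A_0}$ on $(0,1)$ inside the first integral, obtaining
\[
\|a\|_{\langle\vec A\rangle_{\theta,q}^K}^q\ge \|a\|_{A_0}^q\int_0^1 s^{(1-\theta)q-1}\,ds=\frac{\|a\|_{A_0}^q}{(1-\theta)q}.
\]
The triangle inequality in $L^q((0,\infty),ds/s)$ then gives
\[
\|a\|_{\vec A_{\theta,q}^K}\le \|a\|_{\langle\vec A\rangle_{\theta,q}^K}+\frac{\|a\|_{A_0}}{(\theta q)^{1/q}}\le \bigl[1+(1-\theta)^{1/q}\theta^{-1/q}\bigr]\|a\|_{\langle\vec A\rangle_{\theta,q}^K},
\]
which is the claimed upper constant; the lower bound $\|a\|_{\langle\vec A\rangle_{\theta,q}^K}\le \|a\|_{\vec A_{\theta,q}^K}$ is immediate from restricting the integration to $(0,1)$. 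The case $q=\infty$ is handled identically: for $s\ge 1$, $s^{-\theta}K(s,a;\vec A)=s^{-\theta}\|a\|_{A_0}$ is maximized at $s=1$, and the bound $\sup_{0<s<1}s^{-\theta}K(s,a;\vec A)\ge \sup_{0<s<1}s^{1-\theta}\|a\|_{A_0}=\|a\|_{A_0}$ shows the two norms in fact coincide.

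\medskip

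\noindent\emph{Main obstacle.} There is no deep obstacle here; the only conceptual point that needs a moment of care is the strict positivity of $\|s\chi_{(0,1)}\|_F$, required so that the quotient bounding $\|a\|_{A_0}$ is well defined. This is the sole place where one has to invoke the Banach-lattice axiom (no nontrivial norm-zero elements) rather than merely the embeddings $L^\infty\cap L^\infty(1/s)\subset F\subset L^\infty+L^\infty(1/s)$.
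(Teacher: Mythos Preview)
Your proof is correct and follows essentially the same route as the paper: the key splitting $K(\cdot,a;\vec A)=K(\cdot,a;\vec A)\chi_{(0,1)}+\|a\|_{A_0}\chi_{[1,\infty)}$, the estimate $\|a\|_{A_0}\le \|s\chi_{(0,1)}\|_F^{-1}\|K(\cdot,a;\vec A)\chi_{(0,1)}\|_F$ via the monotonicity of $K(s,a;\vec A)/s$, and the explicit evaluation of the two $\Phi_{\theta,q}$-norms all match the paper's argument. Your extra remarks (checking that $\tilde F$ is a parameter, and noting that for $q=\infty$ the two norms actually coincide rather than merely satisfying the inequality with constant $2$) are correct minor additions.
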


\begin{proof}
(i) It is plain that%
\[
\left\Vert a\right\Vert _{\langle\vec{A}\rangle_{\tilde{F}}^{K}}\leq\left\Vert
a\right\Vert _{{\vec{A}_{F}^{K}}}.
\]
Suppose now that $a\in\langle\vec{A}\rangle_{\tilde{F}}^{K}.$ Since the pair
$\vec{A}$ is ordered, $K(s,a;\vec{A})=\left\Vert a\right\Vert _{A_{0}}$ for
$s\geq1.$ Then we can write%
\begin{align*}
\left\Vert a\right\Vert _{{\langle\vec{A}\rangle_{F}^{K}}}  &  \leq\Vert
K(s,a;\vec{A})\chi_{\lbrack0,1]}(s)\Vert_{\tilde{F}}+\Vert K(s,a;\vec{A}%
)\chi_{\lbrack1,\infty]}(s)\Vert_{{F}}\\
&  =\left\Vert a\right\Vert _{\langle\vec{A}\rangle_{\tilde{F}}^{K}}+\Vert
a\Vert_{A_{0}}\Vert\chi_{\lbrack1,\infty]}\Vert_{{F}}.
\end{align*}

Moreover, since the function $K(s,a;\vec{A})/{s}$ decreases, we have%
\begin{align*}
\left\Vert a\right\Vert _{A_{0}}  &  =\frac{K(1,a;\vec{A})}{1}\frac{\Vert
s\chi_{\lbrack0,1]}(s)\Vert_{{F}}}{\Vert s\chi_{\lbrack0,1]}(s)\Vert_{{F}}%
}\leq\Vert s\chi_{\lbrack0,1]}(s)\Vert_{{F}}^{-1}\left\Vert K(s,a;\vec
{A})\right\Vert _{\tilde{F}}\\
&  =\Vert s\chi_{\lbrack0,1]}(s)\Vert_{{F}}^{-1}\left\Vert a\right\Vert
_{\langle\vec{A}\rangle_{\tilde{F}}^{K}}.
\end{align*}
Collecting estimates we get
\begin{equation}
\left\Vert a\right\Vert _{{\langle\vec{A}\rangle_{F}^{K}}}\leq\left(
1+\Vert\chi_{\lbrack1,\infty]}\Vert_{{F}}\cdot\Vert s\chi_{\lbrack
0,1]}(s)\Vert_{{F}}^{-1}\right)  \left\Vert a\right\Vert _{\langle\vec
{A}\rangle_{\tilde{F}}^{K}}, \label{Sum est}%
\end{equation}
and the desired result follows.

(ii) We apply \eqref{Sum est} with $F=\Phi_{\theta,q}.$ The desired result now
follows computing the $\Phi_{\theta,q}$-norms of the functions $\chi
_{\lbrack1,\infty]}(s)$ and $s\chi_{\lbrack0,1]}(s)$ and inserting the
corresponding results in~\eqref{Sum est}. See also \cite[Lemma 3]{Mlim}.
\end{proof}

Likewise, for the $J$-method we have,

\begin{proposition}
\label{Aprop2} Let $\vec{A}=(A_{0},A_{1})$ be an ordered pair. Then

(i) for every parameter $G$ for the $J$-method on $(0,\infty)$ we have, with
equivalence of norms,%
\[
\vec{A}_{G}^{J}=\langle\vec{A}\rangle_{\tilde{G}}^{J},
\]
where $\tilde{G}$ is the sublattice of $G$ consisting of all functions $f$
such that $supp\,f\subset\lbrack0,1]$;

(ii) for all $0<\theta<1$, $1\leq q\leq\infty,$ we have
\[
\left\Vert a\right\Vert _{\vec{A}_{\theta,q}^{J}}\leq\left\Vert a\right\Vert
_{\langle\vec{A}\rangle_{\theta,q}^{J}}\leq\left(  1+\frac{4}{\log2}%
[(1-\theta)q^{\prime}]^{-1/q^{\prime}}\right)  \left\Vert a\right\Vert
_{\vec{A}_{\theta,q}^{J}},
\]
where $[(1-\theta)q^{\prime}]^{-1/q^{\prime}}=1$ when $q=1$.
\end{proposition}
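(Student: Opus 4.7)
The plan is to mirror Proposition \ref{Aprop1}, but at the level of representations: any $J$-representation of $x$ on $(0,\infty)$ will be folded into one on $(0,1)$ by splitting off the tail integral as an element of $A_1$ and then reabsorbing it via a bump near $s=1$. The easy inequality $\|x\|_{\vec{A}_G^J}\leq\|x\|_{\langle\vec{A}\rangle_{\tilde G}^J}$ is immediate: extending any representation $x=\int_0^1 u(s)\,ds/s$ by zero to $(0,\infty)$ keeps $J(s,u(s);\vec{A})$ supported in $[0,1]$, so its $G$-norm equals its $\tilde G$-norm, and taking infima gives the estimate.

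For the reverse inequality in (i), fix $x=\int_0^\infty u(s)\,ds/s$ with $\phi:=J(\cdot,u(\cdot);\vec{A})\in G$, and decompose $x=x_1+x_2$ with $x_1=\int_0^1 u(s)\,ds/s$ and $x_2=\int_1^\infty u(s)\,ds/s$. The ordering $A_1\subset A_0$ forces $\phi(s)=s\|u(s)\|_{A_1}$ on $[1,\infty)$, so
\[
\|x_2\|_{A_1}\leq\int_1^\infty\|u(s)\|_{A_1}\,\frac{ds}{s}=\int_1^\infty\frac{\phi(s)}{s}\,\frac{ds}{s};
\]
writing any decomposition $\phi=\phi_1+\phi_2$ with $\phi_1\in L^1(ds/s)$ and $\phi_2\in L^1(1/s)$, the bound $1/s\leq 1$ on $[1,\infty)$ applied to the first piece and a direct estimate on the second show the right-hand side is $\leq\|\phi\|_{L^1+L^1(1/s)}\lesssim\|\phi\|_G$, so in particular $x_2\in A_1$. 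I then build a single $(0,1)$-representation of $x$ by setting
\[
v(s):=u(s)+\tfrac{1}{\log 2}\,x_2\,\chi_{(1/2,1)}(s),\qquad s\in(0,1),
\]
which satisfies $\int_0^1 v(s)\,ds/s=x_1+x_2=x$. For $s\in(0,1)$, orderedness gives $J(s,x_2;\vec{A})\leq\|x_2\|_{A_1}$, so sublinearity of $J$ in the second slot yields
\[
J(s,v(s);\vec{A})\leq\phi(s)+\tfrac{\|x_2\|_{A_1}}{\log 2}\chi_{(1/2,1)}(s).
\]
Taking $\tilde G$-norms, using $L^1\cap L^1(1/s)\subset G$ to control $\|\chi_{(1/2,1)}\|_G$, and infimising over $u$ closes the abstract estimate $\|x\|_{\langle\vec{A}\rangle_{\tilde G}^J}\lesssim\|x\|_{\vec{A}_G^J}$.

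For part (ii) I would specialise $G=\Phi_{\theta,q}$ and track the constants explicitly. H\"older with exponents $q,q'$ on $\int_1^\infty s^{-\theta}\phi(s)\cdot s^{\theta-1}\,ds/s$ gives $\|x_2\|_{A_1}\leq[(1-\theta)q']^{-1/q'}\Phi_{\theta,q}(\phi)$, with the standard convention making this constant $1$ at $q=1$. A direct calculation of $\Phi_{\theta,q}(\chi_{(1/2,1)})=\bigl((2^{\theta q}-1)/(\theta q)\bigr)^{1/q}$ yields the uniform bound $\Phi_{\theta,q}(\chi_{(1/2,1)})\leq 2^\theta\leq 2$ for every $\theta\in(0,1)$ and $q\in[1,\infty]$. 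Substituting these two ingredients into the abstract estimate produces the claimed prefactor $1+\tfrac{4}{\log 2}[(1-\theta)q']^{-1/q'}$, the constant $4$ conservatively absorbing the uniform bound $2$; the matching lower inequality is the easy direction of~(i).

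The only genuinely delicate step is the membership $x_2\in A_1$: it is precisely here that the embedding $G\subset L^1+L^1(1/s)$ built into the definition of a $J$-parameter is essential. Everything else reduces to the order relation $A_1\subset A_0$, the sublinearity of $J(s,\cdot\,;\vec{A})$, and routine power-weight computations.
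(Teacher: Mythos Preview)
Your proof is correct and follows essentially the same strategy as the paper: fold the tail of a $(0,\infty)$-representation into a constant bump on $(1/2,1)$, using orderedness to control the tail in $A_1$. The only tactical differences are that the paper splits the integral at $1/2$ rather than at $1$ (absorbing $\int_{1/2}^\infty u(s)\,ds/s$ into the bump) and bounds the tail via the K\"othe dual $G'$ and H\"older, whereas you invoke the embedding $G\subset L^1+L^1(1/s)$ directly; your choice is slightly cleaner and in fact yields the sharper constant $1+\tfrac{2}{\log 2}[(1-\theta)q']^{-1/q'}$ in (ii), which you then relax to match the stated $4/\log 2$.
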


\begin{proof}
(i) It is obvious that $\langle\vec{A}\rangle_{\tilde{G}}^{J}\subset\vec
{A}_{G}^{J}$ and, furthermore, the norm of the embedding is $1.$ We now prove
the opposite inclusion.

Let $a\in(A_{0},A_{1})_{G}^{J}.$ For any $\varepsilon>0,$ we can select a
representation $a=\int_{0}^{\infty}u(s)\frac{ds}{s}$ such that
\[
\Vert J(s,u(s);\vec{A})\Vert_{G}\leq\Vert a\Vert_{(A_{0},A_{1})_{G}^{J}%
}+\varepsilon.
\]
Let us define
\[
\tilde{u}(s)=\left\{
\begin{array}
[c]{cc}%
u(s) & 0<s<1/2\\
\frac{1}{\log2}\int_{1/2}^{\infty}u(s)\frac{ds}{s} & 1/2\leq s<1
\end{array}
\right.  .
\]
It is easy to see that
\[
a=\int_{0}^{1}\tilde{u}(s)\frac{ds}{s}.
\]
Moreover, $\tilde{u}(s)$ is an admissible function. Indeed, since the pair
$\vec{A}$ is ordered, we have $A_{0}\cap A_{1}=A_{1},$ and by H\"{o}lder's
inequality,
\begin{align}
\left\Vert \int_{1/2}^{\infty}u(s)\frac{ds}{s}\right\Vert _{A_{1}}  &
\leq\int_{1/2}^{\infty}\left\Vert u(s)\right\Vert _{A_{1}}\frac{ds}%
{s}\nonumber\\
&  \leq\int_{1/2}^{\infty}J(s,u(s);\vec{A})\,\frac{ds}{s^{2}}\nonumber\\
&  \leq\Vert J(s,u(s);\vec{A})\Vert_{G}\Vert s^{-1}\chi_{\lbrack1/2,\infty
)}\Vert_{G^{\prime}}\nonumber\\
&  \leq(\left\Vert a\right\Vert _{\vec{A}_{G}^{J}}+\varepsilon)\Vert
s^{-1}\chi_{\lbrack1/2,\infty)}\Vert_{G^{\prime}}, \label{admissibility}%
\end{align}
where $G^{\prime}$ is the K\"{o}the dual to $G$ with respect to the bilinear
form
\[
(f,g)=\int_{0}^{\infty}f(s)g(s)\,\frac{ds}{s}.
\]
Since $G\subset{L}^{1}+{L}^{1}(1/s)$, it follows that $G^{\prime}\supset
{L}^{\infty}\cap{L}^{\infty}(s)$. Therefore, \newline$\Vert s^{-1}%
\chi_{\lbrack1/2,\infty)}\Vert_{G^{\prime}}<\infty$. In particular, from
\eqref{admissibility} it follows that $\tilde{u}(s):(0,1]\rightarrow A_{1},$
as we wished to show.

Furthermore, we have
\[
\Vert J(s,u(s);\vec{A})\chi_{\lbrack0,1]}\Vert_{\tilde{G}}\leq\left(
\left\Vert a\right\Vert _{\vec{A}_{G}^{J}}+\varepsilon\right)  +\frac{1}%
{\log2}\left\Vert J\left(  s,\int_{1/2}^{\infty}u(r)\,\frac{dr}{r};\vec
{A}\right)  \chi_{\lbrack1/2,1]}(s)\right\Vert _{G}.
\]
Since $J(s,\cdot;\vec{A})$ is a norm for each $s>0,$ and the pair $\vec{A}$ is
ordered, from \eqref{admissibility} we get%

\begin{align*}
\left\Vert J\left(  s,\int_{1/2}^{\infty}u(r)\,\frac{dr}{r};\vec{A}\right)
\chi_{\lbrack1/2,1]}\right\Vert _{G}  &  \leq\left\Vert \int_{1/2}^{\infty
}J(s,u(r);\vec{A})\,\frac{dr}{r}\cdot\chi_{\lbrack1/2,1]}(s)\right\Vert _{G}\\
&  \leq\left\Vert \int_{1/2}^{\infty}\Vert u(r)\Vert_{A_{1}}\,\frac{dr}%
{r}\cdot\chi_{\lbrack1/2,1]}(s)\right\Vert _{G}\\
&  \leq\int_{1/2}^{\infty}\Vert u(r)\Vert_{A_{1}}\,\frac{dr}{r}\cdot\Vert
\chi_{\lbrack1/2,1]}\Vert_{G}\\
&  \leq(\left\Vert a\right\Vert _{\vec{A}_{G}^{J}}+\varepsilon)\cdot\Vert
s^{-1}\chi_{\lbrack1/2,\infty)}\Vert_{G^{\prime}}\cdot\Vert\chi_{\lbrack
1/2,1]}\Vert_{G}.
\end{align*}
Combining inequalities and letting $\varepsilon\rightarrow0,$ we obtain
\begin{equation}
\left\Vert a\right\Vert _{\langle\vec{A}\rangle_{\tilde{G}}^{J}}\leq\left(
1+\frac{1}{\log2}\cdot\Vert s^{-1}\chi_{\lbrack1/2,\infty)}\Vert_{G^{\prime}%
}\cdot\Vert\chi_{\lbrack1/2,1]}\Vert_{G}\right)  \cdot\left\Vert a\right\Vert
_{\vec{A}_{G}^{J}}. \label{general est}%
\end{equation}
This concludes the proof of (i) since we obviously have $\Vert\chi
_{\lbrack1/2,1]}\Vert_{G}<\infty$.

(ii) We apply \eqref{general est} with $G=\Phi_{\theta,q}$. We need to
estimate the norms $\Vert\chi_{\lbrack1/2,1]}\Vert_{\Phi_{\theta,q}}$ and
$\Vert s^{-1}\chi_{\lbrack1/2,\infty)}\Vert_{(\Phi_{\theta,q})^{\prime}}$. For
this purpose note that,
\[
\Vert g\Vert_{(\Phi_{\theta,q})^{\prime}}=\Big(\int_{0}^{\infty}(s^{\theta
}|g(s)|)^{q^{\prime}}\,\frac{ds}{s}\Big)^{1/q^{\prime}},
\]
where $1/q+1/q^{\prime}=1$ (with the natural modification if $q^{\prime
}=\infty$). Consequently,
\[
\Vert s^{-1}\chi_{\lbrack1/2,\infty)}\Vert_{(\Phi_{\theta,q})^{\prime}%
}=\Big(\int_{1/2}^{\infty}s^{(\theta-1)q^{\prime}}\,\frac{ds}{s}%
\Big)^{1/q^{\prime}}\leq2\left(  (1-\theta)q^{\prime}\right)  ^{-1/q^{\prime}%
}.
\]
Furthermore, it can be easily verified that
\[
\Vert\chi_{\lbrack1/2,1]}\Vert_{\Phi_{\theta,q}}=\Big(\int_{1/2}^{1}s^{-\theta
q}\,\frac{ds}{s}\Big)^{1/q}\leq\left(  \frac{2^{\theta q}-1}{\theta q}\right)
^{1/q}\leq2.
\]
The desired result follows upon inserting this information in \eqref{general
est}.
\end{proof}

\begin{remark}
\label{Rem1} We generally use Banach lattices on $[0,1]$ as parameters for the
$K$- and $J$-methods of interpolation. However, we should like to observe that
from Proposition~\ref{Aprop1} it follows that, for every ordered pair $\vec
{A}$ and all $\theta\in\lbrack1/2,1),$ we have%
\[
\left\Vert a\right\Vert _{\langle\vec{A}\rangle_{\theta,q}^{K}}\leq\left\Vert
a\right\Vert _{\vec{A}_{\theta,q}^{K}}\leq2\left\Vert a\right\Vert
_{\langle\vec{A}\rangle_{\theta,q}^{K}}.
\]
Similarly, by Proposition~\ref{Aprop2},
\[
\left\Vert a\right\Vert _{\vec{A}_{\theta,q}^{J}}\leq\left\Vert a\right\Vert
_{\langle\vec{A}\rangle_{\theta,q}^{J}}\leq\left(  1+\frac{8}{\log2}\right)
\left\Vert a\right\Vert _{\vec{A}_{\theta,q}^{J}},
\]
for all $\theta\in(0,1/2]$. Therefore, for ordered pairs $\vec{A}=(A_{0}%
,A_{1})$, $A_{1}\subset A_{0}$, the extrapolation descriptions of limiting
interpolation spaces obtained in Theorems \ref{StrExtrAbst}, \ref{ExtrMinLat}
and \ref{StrExtrAbst(J)} below will still hold if we replace the scale
$\left\{  {\langle\vec{A}\rangle_{\theta,q}^{K\blacktriangleleft}}\right\}
_{\theta\in\lbrack1/2,1)}$ (resp. $\left\{  {\langle\vec{A}\rangle_{\theta
,q}^{J\blacktriangleleft}}\right\}  _{\theta\in(0,1/2]})$ with the classical
scale $\left\{  {\vec{A}_{\theta,q}^{K\blacktriangleleft}}\right\}
_{\theta\in\lbrack1/2,1)}$ (resp. $\left\{  {\vec{A}_{\theta,q}%
^{J\blacktriangleleft}}\right\}  _{\theta\in(0,1/2]})$.
\end{remark}

\subsection{Strong Fundamental Lemma and Peetre's limit
theorem\label{sec:peetre}}

Although we will formally recall {the strong form of the fundamental lemma
later (cf. (\ref{j3}) below), it }will be instructive to present now {a proof
of Peetre's limit formula (cf. (\ref{j1}) above), both because Peetre's result
is useful for our purposes in this paper and furthermore, because the method
of proof illustrates, in a simple context, an argument that appears several
times in our development in this paper (cf. also Sections
\ref{sec:equivalence} and \ref{sec:apllim} below). More precisely, we now
provide a simple approach (compare with \cite[Lemma 1.1]{pee}) to obtain the
non-trivial part of (\ref{j1}) for Gagliardo complete pairs.}

Let $\vec{A}$ be a Gagliardo complete pair with $A_{0}\cap A_{1}$ dense in
$A_{0}$. Let $f\in A_{0}.$ By the strong form of the fundamental lemma we can
find a decomposition such that
\[
f=\int_{0}^{\infty}u(s)\frac{ds}{s},
\]
such that for every $t>0,$ it holds
\[
\int_{0}^{t}J(s,u(s),\vec{A})\frac{ds}{s}\leq\gamma K(t,f,\vec{A}),
\]
where $\gamma$ is a universal constant. But then, taking supremum over all
$t>0,$ it follows that%
\[
\left\Vert f\right\Vert _{\vec{A}_{0,1}^{J}}\leq\gamma\left\Vert f\right\Vert
_{A_{0}},
\]
as we wished to show.

\subsection{Extrapolation methods\label{sec:extrapol}}

In this section we continue the exposition started in the Introduction of the
paper and provide a brief background survey on the extrapolation methods we
shall use in this paper. For more information and results we refer to the
recent survey \cite{AL2017} and the references therein.

It will be useful to start by pointing out the common root between
interpolation and extrapolation. In interpolation we work with functors $F,$
that assign to each pair $\vec{A}$ an interpolation space $F(\vec{A}$)$.$ On
the other hand, extrapolation functors $\mathfrak{E}$ are defined {on some set
of families} $\{A_{\theta}\}_{\theta\in(0,1)}$ of compatible Banach spaces (in
the sense that there exist two Banach spaces $A_{0}$ and $A_{1}$, such that
for each $\theta\in(0,1)$, we have with continuous inclusions $A_{1}\subset
A_{\theta}\subset A_{0})$ and assign to each such family an extrapolation
space $\mathfrak{E}(\{A_{\theta}\}_{\theta\in(0,1)})$ with the following
interpolation property. If $T$ is an operator such that $T:A_{\theta
}\overset{1}{\rightarrow}B_{\theta},$ for each $\theta\in(0,1),$ then $T$ can
be extended to $T:\mathfrak{E}(\{A_{\theta}\}_{\theta\in(0,1)})\rightarrow
\mathfrak{E}(\{B_{\theta}\}_{\theta\in(0,1)})$. The simplest, and at the same
time more important extrapolation functors, are $\Sigma$ and $\Delta$ methods
which we now describe (cf. \cite{JM91}).

Suppose that the norms $M_{0}(\theta)$ of the inclusions$\,\ A_{\theta}\subset
A_{0}$ satisfy the condition: $\sup_{\theta\in(0,1)}M_{0}(\theta)<\infty$.
Then we can form the space $\Sigma\{A_{\theta}\}_{\theta\in(0,1)}$ of all the
elements $x\in A_{0}$ that can be represented \ by $x=\Sigma a_{\theta}$,
$a_{\theta}\in A_{\theta}$, with $\Sigma\left\Vert a_{\theta}\right\Vert
_{A_{\theta}}<\infty$. We endow $\Sigma\{A_{\theta}\}_{\theta\in(0,1)}$ with
the corresponding quotient norm. Likewise, if the norms $M_{1}(\theta)$ of the
embeddings $A_{1}\subset A_{\theta}$ are uniformly bounded, we form the space
$\Delta\{A_{\theta}\}_{\theta\in(0,1)}$ of all elements $x\in\cap_{\theta
\in(0,1)}A_{\theta},$ such that $\left\Vert x\right\Vert _{\Delta\{A_{\theta
}\}_{\theta\in(0,1)}}:=\sup_{\theta\in(0,1)}\left\Vert x\right\Vert
_{A_{\theta}}<\infty.$

The theory of extrapolation originated from the classical results of Yano
\cite{yano}. One of the objectives of modern extrapolation theory is to
characterize family of inequalities and, in particular, reverse the
interpolation process and find the best possible end point inequalities. In
this sense modern extrapolation is a qualitative evolution from the initial
extrapolation theorems of Yano. Indeed, in the classical extrapolation
theorems of Yano type, e.g., from the assumption that $T:L^{p}(0,1)\rightarrow
L^{p}(0,1),$ with $\left\Vert T\right\Vert _{L^{p}(0,1)\rightarrow L^{p}%
(0,1)}\leq\frac{c}{p-1},$ for all $p>1,$ we can conclude that
$T:LLogL(0,1)\rightarrow L^{1}(0,1)$ but we have the drawback that, in
general, $T:LLogL(0,1)\rightarrow L^{1}(0,1)\nRightarrow$ $T:L^{p}%
(0,1)\rightarrow L^{p}(0,1)$ (unless we have more structural assumptions on
the underlying measure space and the operator $T$ (cf. \cite{Tao})). On the
other hand, by \cite{JM91}, the assumptions of Yano's theorem above are
equivalent to{\ the inequality
\[
K(t,Tf;L^{1},L^{\infty})\leq c\int_{0}^{t}K(s,f;L^{1},L^{\infty})\frac{ds}%
{s},
\]
} or informally, denoting $x^{\ast\ast}(t):=\frac{1}{t}\int_{0}^{t}x^{\ast
}(s)\,ds$, we have
\[
\left\Vert T\right\Vert _{L^{p}(0,1)\rightarrow L^{p}(0,1)}\leq\frac{c}%
{p-1},\text{for all }p>1\Leftrightarrow\left(  Tf\right)  ^{\ast\ast}%
(t)\leq\frac{c}{t}\int_{0}^{t}f^{\ast\ast}(s)ds.
\]

The $\Sigma$- and $\Delta$-methods are the natural prototypes of two general
families of extrapolation functors that were{\ introduced in \cite{As2003} and
then were studied in \cite{As05,AL2009,AL,AL2017}. Let $F$ be a Banach
function lattice on the interval $[0,1]$ (with respect to the usual Lebesgue
measure). A given family $\{A_{\theta}\}_{\theta\in(0,1)}$ of compatible
Banach spaces, we define the Banach space $\mathbf{F}(\{A_{\theta}%
\}_{\theta\in(0,1)})$, consisting of all $a\in$}$\cap_{\theta\in
(0,1)}A_{\theta}${\ such that the function $\theta\in(0,1)\mapsto{\Vert
a\Vert}_{A_{\theta}}$ belongs to $F$, endowed with the norm $\Vert
a\Vert:={\left\Vert \,{\Vert a\Vert}_{A_{\theta}}\,\right\Vert }_{F}$. In
particular, if $F=L^{\infty}[0,1]$, we arrive at the definition of the
$\Delta$-functor. In analogous way one can define a family of extrapolation
functors generalizing the $\Sigma$-functor (see the definition of the $\vec
{A}_{\xi,q,G}^{J}$ spaces in Section \ref{sec:main}, (\ref{represetation1})
below).}

\section{A new characterization of limiting interpolation
spaces\label{sec:main}}

{We begin with the following key result of this paper.}

\begin{theorem}
\label{StrExtrAbst} Let $\vec{A}=(A_{0},A_{1})$ be a Banach pair, and let $F$
be a parameter for the $K$-method on $[0,1]$.
Suppose that the operator $Tf(t):=f(t^{2})/t$ is bounded on $F$. Then there
exist absolute constants such that for all $a\in${$\langle\vec{A}\rangle
_{F}^{K}$}, $q\in\lbrack1,\infty]$, it holds%
\begin{equation}
\Vert a\Vert_{{\langle\vec{A}\rangle_{F}^{K}}}\asymp\left\Vert t\cdot\Vert
a\Vert_{{\langle}\vec{A}\rangle_{\theta(t),q}^{K\blacktriangleleft}%
}\right\Vert _{F}\text{,} \label{BasEq}%
\end{equation}
where $\theta(t)=1+\frac{1}{2\log(t/e)}$.
\end{theorem}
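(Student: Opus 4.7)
The plan is to prove \eqref{BasEq} as two separate inequalities, the lower bound being elementary and the upper bound relying crucially on the hypothesized boundedness of $T$. For the lower bound I would establish the pointwise estimate $K(t,a;\vec{A})\leq C\,t\cdot\Vert a\Vert_{\langle\vec{A}\rangle_{\theta(t),q}^{K\blacktriangleleft}}$ for $t\in(0,1]$, with an absolute constant $C$. In the case $q=\infty$ this is immediate from plugging $s=t$ into the defining supremum: $\Vert a\Vert_{\langle\vec{A}\rangle_{\theta(t),\infty}^{K\blacktriangleleft}}\geq t^{-\theta(t)}K(t,a;\vec{A})$, and the choice $\theta(t)=1+\tfrac{1}{2\log(t/e)}$ is precisely what makes the numerical factor $t^{1-\theta(t)}=\exp\bigl(-\tfrac{\log t}{2\log(t/e)}\bigr)$ bounded above and below by positive constants on $(0,1]$. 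For general $q\in[1,\infty)$ one instead restricts the integral defining $\Vert a\Vert_{\langle\vec{A}\rangle_{\theta(t),q}^{K}}$ to the interval $s\in[t,\min(2t,1)]$, where $K(s,a;\vec{A})\asymp K(t,a;\vec{A})$ by monotonicity, and then verifies that the normalization $(q\theta(t)(1-\theta(t)))^{1/q}$ yields the same pointwise bound with a constant uniform in $q$; taking $F$-norms delivers the lower half of \eqref{BasEq}.

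The upper bound is the heart of the theorem. The plan is to establish a pointwise bound of the form
\[
t\cdot\Vert a\Vert_{\langle\vec{A}\rangle_{\theta(t),q}^{K\blacktriangleleft}}\;\leq\;C\Bigl(K(t,a;\vec{A})+\frac{K(t^{2},a;\vec{A})}{t}\Bigr),\qquad t\in(0,1],
\]
obtained by splitting the defining integral (resp.\ supremum) over the three regions $(0,t^{2}]$, $[t^{2},t]$, $[t,1]$ and invoking the quasiconcavity estimates $K(s,a;\vec{A})\leq(s/r)K(r,a;\vec{A})$ for $s\leq r$ and $K(s,a;\vec{A})\leq K(r,a;\vec{A})$ for $s\geq r$, with $r$ chosen as the endpoint of each interval on which the corresponding bound is tight. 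The specific form of $\theta(t)$ makes the resulting weights $t^{a(1-\theta(t))}$ and $t^{1-b\theta(t)}$ collapse to bounded multiples of $1$, $t$, or $1/t$ uniformly on $(0,1]$, while the normalization $(q\theta(t)(1-\theta(t)))^{1/q}$ absorbs the remaining $q$-dependence of the integrals that appear. Once the pointwise inequality is in hand, I take $F$-norms and invoke the hypothesis in the form
\[
\Bigl\Vert\frac{K(t^{2},a;\vec{A})}{t}\Bigr\Vert_{F}=\Vert T[K(\cdot,a;\vec{A})]\Vert_{F}\leq \Vert T\Vert_{F\to F}\cdot\Vert a\Vert_{\langle\vec{A}\rangle_{F}^{K}},
\]
which folds the second term back into a multiple of the left-hand norm of \eqref{BasEq} and completes the estimate.

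The main obstacle lies in the upper bound, and specifically the delicate bookkeeping required to ensure the pointwise inequality's right-hand side stays inside the orbit of $K(\cdot,a;\vec{A})$ under a \emph{single} application of $T$, rather than spreading into a full series of $T^{n}$-iterates whose sum one has no reason to control in $F$. The careful choice of $\theta(t)$ is what keeps this orbit to a single iterate: the identity $(1-\theta(t))\,|\log(t/e)|=\tfrac12$ is exactly what ensures that the inner integral over $(0,t^{2}]$ produces a bounded multiple of $K(t^{2},a;\vec{A})/t$ and not any deeper scales $t^{3},t^{4},\dots$ My inclination is to carry out the details first in the case $q=\infty$, where the argument is cleanest and essentially pointwise in $s$, and then to recover the full range of $q$ by comparing the normalized norms $\Vert\cdot\Vert_{\langle\vec{A}\rangle_{\theta,q}^{K\blacktriangleleft}}$ with their $q=1$ and $q=\infty$ analogues over the range $\theta\in[1/2,1)$ that is swept out by $\theta(t)$ as $t$ runs over $(0,1]$, which is exactly the range covered by Remark~\ref{Rem1}.
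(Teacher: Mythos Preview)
Your upper-bound argument has a genuine gap: the quasiconcavity inequalities you invoke are written with the wrong direction. The correct statements are $K(s,a;\vec{A})\leq K(r,a;\vec{A})$ for $s\leq r$ (monotonicity) and $K(s,a;\vec{A})\leq (s/r)K(r,a;\vec{A})$ for $s\geq r$ (since $K(s,a)/s$ decreases). On the innermost region $(0,t^{2}]$ you therefore only have $K(s,a;\vec{A})\leq K(t^{2},a;\vec{A})$, and with this bound the contribution
\[
t\,\theta(t)(1-\theta(t))\int_{0}^{t^{2}} s^{-\theta(t)}K(s,a;\vec{A})\,\frac{ds}{s}
\;\leq\; t\,\theta(t)(1-\theta(t))\,K(t^{2},a;\vec{A})\int_{0}^{t^{2}}s^{-\theta(t)-1}\,ds
\]
diverges, since $\theta(t)>0$. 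The bound $K(s,a;\vec{A})\leq (s/t^{2})K(t^{2},a;\vec{A})$ that your computation implicitly needs is simply false for $s<t^{2}$. Concretely, if $K(s,a;\vec{A})\asymp s^{\alpha}$ near $0$ with $\alpha<\theta(t)$ (any $\alpha<1$ will do for small $t$), then $\Vert a\Vert_{\langle\vec{A}\rangle_{\theta(t),q}^{K\blacktriangleleft}}=\infty$ for every $q$, while $K(t,a;\vec{A})+K(t^{2},a;\vec{A})/t$ is finite; so the pointwise inequality you propose cannot hold.

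What the paper does---and what is unavoidable---is precisely the dyadic recursion you hoped to sidestep: one splits $(0,t]=\bigcup_{k\geq 1}[t^{2^{k}},t^{2^{k-1}}]$ and on each piece uses the \emph{correct} bound $K(s,a;\vec{A})\leq (s/t^{2^{k}})K(t^{2^{k}},a;\vec{A})$ (valid because $s\geq t^{2^{k}}$ there). This produces a pointwise majorant $\sum_{k\geq 0} c_{k}\,T^{k}\bigl(K(\cdot,a;\vec{A})\bigr)(t)$ with $c_{k}\asymp e^{-2^{k-3}}$ once one restricts to $t\leq 1/e$ and uses $t^{1-\theta(t)}\leq e^{-1/4}$. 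Your worry that such a series ``has no reason to be controlled in $F$'' is unfounded: the super-exponential decay of $c_{k}$ beats any geometric growth $\Vert T\Vert_{F\to F}^{k}$, so $\sum_{k}c_{k}\Vert T\Vert^{k}<\infty$ and the $F$-norm of the series is dominated by a constant times $\Vert K(\cdot,a;\vec{A})\Vert_{F}$. Your lower bound and your reduction across $q$ via the uniform comparability of the normalized norms for $\theta\in[1/2,1)$ are fine (this is inequality \eqref{embeddings} in the paper, not Remark~\ref{Rem1}), but the single-iterate upper bound must be replaced by the full dyadic sum.
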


\begin{proof}
From the inequality
\[
\min(1,t/s)K(s,a;\vec{A})\leqslant K(t,a;\vec{A})
\]
we get
\begin{align*}
\Vert a\Vert_{{\langle}\vec{A}\rangle_{\theta,q}^{K\blacktriangleleft}}  &
\geq(\theta(1-\theta)q)^{\frac{1}{q}}K(s,a;\vec{A})\left(  \int\limits_{0}%
^{1}(t^{-\theta}\min(1,t/s))^{q}\,\frac{dt}{t}\right)  ^{\frac{1}{q}}\\
&  \geq(\theta(1-\theta)q)^{\frac{1}{q}}K(s,a;\vec{A})\left(  \frac{s^{-\theta
q}}{(1-\theta)q}\right)  ^{\frac{1}{q}}\\
&  =\theta^{\frac{1}{q}}s^{-\theta}K(s,a;\vec{A}).
\end{align*}
Hence, if $1\leqslant q\leqslant r<\infty$,
\begin{align*}
\Vert a\Vert_{{\langle}\vec{A}\rangle_{\theta,r}^{K\blacktriangleleft}}  &
=(\theta(1-\theta)r)^{\frac{1}{r}}\left(  \int\limits_{0}^{1}(t^{-\theta
}K(t,a;\vec{A}))^{q}(t^{-\theta}K(t,a;\vec{A}))^{r-q}\,\frac{dt}{t}\right)
^{\frac{1}{r}}\\
&  \leq({r}/{q})^{\frac{1}{r}}\Vert a\Vert_{{\langle}\vec{A}\rangle_{\theta
,q}^{K\blacktriangleleft}}^{\frac{q}{r}}\cdot\theta^{\frac{1}{r}-\frac{1}{q}%
}\Vert a\Vert_{{\langle}\vec{A}\rangle_{\theta,q}^{K\blacktriangleleft}%
}^{1-\frac{q}{r}}=({r}/{q})^{\frac{1}{r}}\theta^{\frac{1}{r}-\frac{1}{q}}\Vert
a\Vert_{{\langle}\vec{A}\rangle_{\theta,q}^{K\blacktriangleleft}}.
\end{align*}
On the other hand, from the definition of $\theta(t)$ it follows readily that
for $0<t\leq1$, we have $1/2\leq\theta(t)<1$. Combining this observation with
the preceding inequality, we obtain that for all $0<t\leq1$,%
\begin{equation}
\frac{1}{2}\Vert a\Vert_{\langle\vec{A}\rangle_{\theta(t),\infty
}^{K\blacktriangleleft}}\leq\Vert a\Vert_{\langle\vec{A}\rangle_{\theta
(t),r}^{K\blacktriangleleft}}\leq4\Vert a\Vert_{\langle\vec{A}\rangle
_{\theta(t),q}^{K\blacktriangleleft}},\text{\quad}1\leq q\leq r\leq
\infty\text{.} \label{embeddings}%
\end{equation}
Moreover, on account of the fact that
\[
\log t^{1-\theta(t)}=-\frac{\log t}{2\log(t/e)}\geq-\frac{1}{2},\;\;0<t\leq1,
\]
it follows that
\begin{equation}
t^{1-\theta(t)}\geq e^{-1/2},\;\;0<t\leq1. \label{aux1}%
\end{equation}
Therefore, for all $0<t\leq1,$
\begin{align}
t\Vert a\Vert_{\langle\vec{A}\rangle_{\theta(t),q}^{K\blacktriangleleft}}  &
\geq\frac{t}{2}\Vert a\Vert_{\langle\vec{A}\rangle_{\theta(t),\infty
}^{K\blacktriangleleft}}=\frac{t}{2}\underset{0<s\leq1}{\sup}\left(
s^{-\theta(t)}K(s,a;\vec{A})\right) \nonumber\\
&  \geq\frac{1}{2}t^{1-\theta(t)}K(t,a;\vec{A})=\frac{1}{2\sqrt{e}}%
K(t,a;\vec{A}). \label{Basaux}%
\end{align}
Consequently,
\[
\left\Vert t\cdot\Vert a\Vert_{\langle\vec{A}\rangle_{\theta(t),q}%
^{K\blacktriangleleft}}\right\Vert _{F}\geq\frac{1}{2\sqrt{e}}\Vert
a\Vert_{{\langle\vec{A}\rangle_{F}^{K}}}\text{.}%
\]

To prove the converse inequality let us write,%
\begin{equation}
\left\Vert t\cdot\Vert a\Vert_{\langle\vec{A}\rangle_{\theta(t),q}%
^{K\blacktriangleleft}}\right\Vert _{F}\leq\left\Vert t\cdot\Vert
a\Vert_{\langle\vec{A}\rangle_{\theta(t),q}^{K\blacktriangleleft}}%
\chi_{(0,1/e)}\right\Vert _{F}+\left\Vert t\cdot\Vert a\Vert_{\langle\vec
{A}\rangle_{\theta(t),q}^{K\blacktriangleleft}}\chi_{(1/e,1)}\right\Vert _{F}.
\label{absorb}%
\end{equation}
We will now show that the second term on the right hand side of (\ref{absorb})
can be absorbed into the first one. Indeed, the definition of $\theta(t)$
implies that for $1/e<t\leq1,$ we have $\theta(t)\leq3/4$; consequently%
\begin{align*}
\left\Vert t\cdot\Vert a\Vert_{\langle\vec{A}\rangle_{\theta(t),q}%
^{K\blacktriangleleft}}\chi_{(1/e,1)}\right\Vert _{F}  &  \leq C_{1}\left\Vert
t\cdot\Vert a\Vert_{\langle\vec{A}\rangle_{\theta(t),q}^{K\blacktriangleleft}%
}\chi_{(1/e,1)}\right\Vert _{{L}^{\infty}(1/t)}\\
&  \leq\frac{4}{3}C_{1}\Vert a\Vert_{\langle\vec{A}\rangle_{3/4,q}%
^{K\blacktriangleleft}}\\
&  \leq\frac{4}{3}C_{1}e\left\Vert t\cdot\Vert a\Vert_{\langle\vec{A}%
\rangle_{\theta(t),q}^{K\blacktriangleleft}}\chi_{(1/e^{2},1/e)}\right\Vert
_{{L}^{\infty}}\\
&  \leq\frac{4}{3}C_{1}C_{2}e\left\Vert t\cdot\Vert a\Vert_{\langle\vec
{A}\rangle_{\theta(t),q}^{K\blacktriangleleft}}\chi_{(0,1/e)}\right\Vert _{F},
\end{align*}
where $C_{1}$ and $C_{2}$ are the constants of the embeddings ${L}^{\infty
}(1/t)\subset F$ and $F\subset{L}^{\infty}$, respectively. Inserting this
estimate in (\ref{absorb}) we find
\begin{align}
\left\Vert t\cdot\Vert a\Vert_{\langle\vec{A}\rangle_{\theta(t),q}%
^{K\blacktriangleleft}}\right\Vert _{F}  &  \leq\left\Vert t\cdot\Vert
a\Vert_{\langle\vec{A}\rangle_{\theta(t),q}^{K\blacktriangleleft}}%
\chi_{(0,1/e)}\right\Vert _{F}+\left\Vert t\cdot\Vert a\Vert_{\langle\vec
{A}\rangle_{\theta(t),q}^{K\blacktriangleleft}}\chi_{(1/e,1)}\right\Vert
_{F}\nonumber\\
&  \leq\Big(1+\frac{4}{3}C_{1}C_{2}e\Big)\left\Vert t\cdot\Vert a\Vert
_{\langle\vec{A}\rangle_{\theta(t),q}^{K\blacktriangleleft}}\chi
_{(0,1/e)}\right\Vert _{F}. \label{EQ1100}%
\end{align}
We now estimate the right-hand side of \eqref{EQ1100}. From \eqref{embeddings}
it follows that for every $1/2\leq\theta<1,$ and $1\leq q\leq\infty$,
\begin{equation}
\Vert a\Vert_{\langle\vec{A}\rangle_{\theta,q}^{K\blacktriangleleft}}%
\leq4\Vert a\Vert_{\langle\vec{A}\rangle_{\theta,1}^{K\blacktriangleleft}}%
\leq4(1-\theta)\int\limits_{0}^{1}s^{-\theta}K(s,a;\vec{A})\,\frac{ds}{s}.
\label{aux2}%
\end{equation}
Furthermore, for all $0<t\leq1/e,$ we have
\[
\log t^{1-\theta(t)}=-\frac{\log t}{2\log(t/e)}\leq-\frac{1}{4}.
\]
Hence,
\[
t^{1-\theta(t)}\leq e^{-1/4}.
\]
The last inequality, combined with the fact that $K(s,a;\vec{A})/s$ is a
decreasing function, yields that for all $0<t\leq1/e$,
\begin{align*}
t(1-\theta(t))\int\limits_{0}^{1}s^{-\theta(t)}K(s,a;\vec{A})\frac{ds}{s}  &
=t(1-\theta(t))\int\limits_{t}^{1}s^{-\theta(t)}K(s,a;\vec{A})\frac{ds}{s}\\
&  +t(1-\theta(t))\sum_{k=1}^{\infty}\int\limits_{t^{2^{k}}}^{t^{2^{k-1}}%
}s^{-\theta(t)}K(s,a;\vec{A})\frac{ds}{s}\\
&  \leq K(t,a;\vec{A})+\sum_{k=1}^{\infty}\frac{K(t^{2^{k}},a;\vec{A}%
)}{t^{2^{k}}}t\cdot t^{(1-\theta(t))2^{k-1}}\\
&  \leq K(t,a;\vec{A})+\sum_{k=1}^{\infty}e^{-2^{k-3}}\frac{K(t^{2^{k}}%
,a;\vec{A})}{t^{2^{k}}}t\\
&  \leq\sum_{k=0}^{\infty}e^{1-2^{k-3}}\frac{K(t^{2^{k}},a;\vec{A})}{t^{2^{k}%
}}t\\
&  =\sum_{k=0}^{\infty}e^{1-2^{k-3}}T^{k}\left(  K(t,a;\vec{A})\right)  .
\end{align*}
From \eqref{aux2}, and the fact that $T$ is bounded on $F$, we obtain
\begin{align*}
\left\Vert t\cdot\Vert a\Vert_{\langle\vec{A}\rangle_{\theta(t),q}%
^{K\blacktriangleleft}}\chi_{(0,1/e)}\right\Vert _{F}  &  \leq4\left\Vert
\sum_{k=0}^{\infty}e^{1-2^{k-3}}T^{k}\left(  K(t,a;\vec{A})\right)
\right\Vert _{F}\\
&  \leq4\sum_{k=0}^{\infty}e^{1-2^{k-3}}\left\Vert T^{k}\left(  K(t,a;\vec
{A})\right)  \right\Vert _{F}\\
&  \leq4\sum_{k=0}^{\infty}e^{1-2^{k-3}}\Vert T\Vert_{F\rightarrow F}%
^{k}\left\Vert K(t,a;\vec{A})\right\Vert _{F}\\
&  \leq C\Vert a\Vert_{{\langle\vec{A}\rangle_{F}^{K}}}\text{.}%
\end{align*}
Finally, combining the last inequality with \eqref{EQ1100} we obtain the
desired result.
\end{proof}

\begin{remark}
\label{Zam1} An inspection of the proof of Theorem~\ref{StrExtrAbst} shows
that the constants of equivalence \eqref{BasEq} depend on the norm of the
operator $T$ acting on $F$, and on the norm of the embeddings ${L}^{\infty
}(1/t)\subset F\subset{L}^{\infty}$. The latter dependence can be eliminated
if $F$ is an interpolation space with respect to the pair $({L}^{\infty}%
,{L}^{\infty}(1/t))$. Indeed, arguing in the same way as in the proof of
Theorem~\ref{StrExtrAbst}, we obtain
\[
\left\Vert t\cdot\Vert a\Vert_{\langle\vec{A}\rangle_{\theta_{1}%
(t),q}^{K\blacktriangleleft}}\chi_{(0,1/e)}\right\Vert _{F}\leq C\Vert
a\Vert_{{\langle\vec{A}\rangle_{F}^{K}}},
\]
where $\theta_{1}(t)=1+\frac{1}{2\log t}$, $0<t<1/e$. Therefore, if
$C^{\prime}$ denotes the norm of the dilation operator $\sigma_{e}%
f(t):=f(t/e)$ on $F$, we see that
\begin{align*}
\left\Vert t\cdot\Vert a\Vert_{\langle\vec{A}\rangle_{\theta(t),q}%
^{K\blacktriangleleft}}\right\Vert _{F}  &  =e\left\Vert \sigma_{e}%
(t\cdot\Vert a\Vert_{\langle\vec{A}\rangle_{\theta_{1}(t),q}%
^{K\blacktriangleleft}}\chi_{(0,1/e)})\right\Vert _{F}\\
&  \leq eC^{\prime}\left\Vert t\cdot\Vert a\Vert_{\langle\vec{A}%
\rangle_{\theta_{1}(t),q}^{K\blacktriangleleft}}\chi_{(0,1/e)}\right\Vert
_{F}\\
&  \leq eCC^{\prime}\left\Vert a\right\Vert _{\langle\vec{A}\rangle_{F}^{K}},
\end{align*}
and our claim follows.

In particular, we note that the constants of equivalence in \eqref{BasEq} are
independent of $q$. It is now easy to see that
\begin{equation}
\left\Vert a\right\Vert _{{\langle\vec{A}\rangle_{F}^{K}}}\asymp\left\Vert
t\cdot\Vert a\Vert_{\langle\vec{A}\rangle_{\theta(t),q(t)}%
^{K\blacktriangleleft}}\right\Vert _{F} \label{ExtRel}%
\end{equation}
for each continuous function $q(t):\,(0,1]\rightarrow\lbrack1,\infty]$.
\end{remark}

\begin{remark}
\label{Zam3} We remind the reader that, according to Remark \ref{Rem1}, when
dealing with ordered pairs $\vec{A}=(A_{0},A_{1}),$ we can replace the scale
$\{\langle\vec{A}\rangle_{\theta,q}^{K\blacktriangleleft}\}$ in Theorem
\ref{StrExtrAbst} with the scale of the normalized Lions-Peetre spaces
$\{\vec{A}_{\theta,q}^{K\blacktriangleleft}\}$. Similarly, in \eqref{ExtRel}
we can replace the scale $\{\langle\vec{A}\rangle_{\theta(t),q(t)}%
^{K\blacktriangleleft}\}_{t\in(0,1]}$ with the scale $\{\vec{A}_{\theta
(t),q(t)}^{K\blacktriangleleft}\}_{t\in(0,1]}$. As an example, consider the
ordered pair $\vec{A}=(L^{1}[0,1],L^{\infty}[0,1]).$ Observe that if
$\theta\geq1/2$, then setting $q=1/(1-\theta)$ we have $q\geq2,$ and one can
easily verify that
\[
\frac{1}{\sqrt{2}}\leq(q\theta(1-\theta))^{1/q}\leq1.
\]
Therefore, combining
\[
\Vert a\Vert_{\langle\vec{A}\rangle_{\theta,q}^{K}}=\Big(\int_{0}%
^{1}\Big(\frac{1}{t}\int_{0}^{t}a^{\ast}(s)\,ds\Big)^{q}\,dt\Big)^{1/q},
\]
(see the formula for the $K$-functional of the couple $(L^{1},L^{\infty})$ in
\cite[Theorem~5.2.1]{BL}) and the fact that the norm of the Hardy operator
$a(s)\mapsto1/t\int_{0}^{t}a^{\ast}(s)\,ds$ on $L_{q}[0,1]$ is equal to
$q/(q-1)$, we obtain
\[
\frac{1}{\sqrt{2}}\Vert a\Vert_{L_{q}[0,1]}\leq\Vert a\Vert_{\vec{A}%
_{\theta,q}^{K\blacktriangleleft}}\leq2\Vert a\Vert_{L_{q}[0,1]}.
\]
Consequently, for a Banach lattice $F$ satisfying the conditions of Theorem
\ref{StrExtrAbst} we obtain
\[
\Vert a\Vert_{{\langle L^{1}[0,1],L^{\infty}[0,1]\rangle_{F}^{K}}}%
\asymp\left\Vert t\cdot\Vert a\Vert_{L^{q(t)}[0,1]}\right\Vert _{F}%
,\quad{where}\;q(t)=2\log(e/t).
\]

On the other hand, for non-ordered pairs $\vec{A}$ it follows readily from the
definitions that for $\theta\geq1/2$ we can write
\[
\langle\vec{A}\rangle_{\theta,q}^{K\blacktriangleleft}=(A_{0}+A_{1}%
,A_{1})_{\theta,q}^{K\blacktriangleleft},
\]
with the norm equivalence independent of $\theta$. Indeed, it is easy to see
that
\[
K(t,a;A_{0}+A_{1},A_{1})=%
\begin{cases}
K(t,a;A_{0},A_{1})\quad\mbox{for}\quad0<t\leq1,\\
K(1,a;A_{0},A_{1})\quad\mbox{for}\quad t\geq1.
\end{cases}
\]
Hence, for $\theta\geq1/2$ we have$,$
\begin{align*}
\Vert a\Vert_{\langle\vec{A}\rangle_{\theta,q}^{K\blacktriangleleft}}  &
\leq\Vert a\Vert_{(A_{0}+A_{1},A_{1})_{\theta,q}^{K\blacktriangleleft}}%
\leq\Vert a\Vert_{\langle\vec{A}\rangle_{\theta,q}^{K\blacktriangleleft}%
}+(1-\theta)^{\frac{1}{q}}K(1,a;A_{0},A_{1})\\
&  \leq(1+({1-\theta})^{\frac{1}{q}}\theta^{-\frac{1}{q}})\Vert a\Vert
_{\langle\vec{A}\rangle_{\theta,q}^{K\blacktriangleleft}}\leq2\Vert
a\Vert_{\langle\vec{A}\rangle_{\theta,q}^{K\blacktriangleleft}}.
\end{align*}
In particular, let us consider the (unordered) pair $\vec{A}=(L^{1}%
(0,\infty),L^{\infty}(0,\infty)).$ Then, for every Banach lattice $F$ that
satisfies the conditions of Theorem \ref{StrExtrAbst}, we have the
equivalence
\[
\Vert a\Vert_{{\langle\vec{A}\rangle_{F}^{K}}}=\Vert a\Vert_{{\langle
L^{1},L^{\infty}\rangle_{F}^{K}}}\asymp\left\Vert t\cdot{\Vert a\Vert
}_{L^{q(t)}+L^{\infty}}\right\Vert _{F},\quad{where}\;q(t)=2\log
(e/t),t\in\lbrack0,1].
\]
For example, if $F=L^{\infty}(1/t)(0,1)$ we get the trivial equality
\[
L^{\infty}(0,\infty)=\Delta_{q\geq2}(L^{q}(0,\infty)+L^{\infty}(0,\infty)).
\]

\end{remark}

\begin{remark}
\label{Zam1a} It will be useful to present a detailed reformulation of
Theorem~\ref{StrExtrAbst} for interpolation spaces constructed using the
$K$-method and parameter spaces defined on $[1,\infty)$. Suppose that $G$ is a
Banach lattice of functions on $[1,\infty)$ such that $L^{\infty}%
[1,\infty)\subset G\subset L^{\infty}(1/t)[1,\infty)$, and let $\Vert
b\Vert_{\langle\langle{\vec{B}}\rangle\rangle_{G}^{K}}:=\Vert K(t,b;\vec
{B})\chi_{(1,\infty)}\Vert_{G}.$ Then, if the operator $Sf(t):=f(t^{2})$ is
bounded on $G$, it follows that for each $q\in\lbrack1,\infty]$,
\begin{equation}
\Vert b\Vert_{\langle\langle{B_{0},B_{1}}\rangle\rangle_{G}^{K}}%
\asymp\left\Vert \Vert b\Vert_{\langle\langle B_{0},B_{1}\rangle\rangle
_{\eta(t),q}^{K\blacktriangleleft}}\right\Vert _{G}\text{,} \label{BasEqa}%
\end{equation}
where $\eta(t)=\frac{1}{2\log(et)}$, $t\geq1$, and
\[
\Vert b\Vert_{\langle\langle B_{0},B_{1}\rangle\rangle_{\eta(t),q}%
^{K\blacktriangleleft}}:=[q\eta(t)(1-\eta(t))]^{1/q}\left(  \int%
\limits_{1}^{\infty}\left(  s^{-\eta(t)}K(s,b;\vec{B})\right)  ^{q}\,\frac
{ds}{s}\right)  ^{\frac{1}{q}},
\]
with the natural modification if $q=\infty$.

Indeed, since $K(t,b;\vec{B})=tK(1/t,b;B_{1},B_{0})$, $t>0$, we readily see
that
\[
\langle\langle B_{0},B_{1}\rangle\rangle_{G}^{K}=\langle B_{1},B_{0}%
\rangle_{F}^{K},
\]
where $F$ is the Banach lattice on $[0,1]$ normed by $\Vert f\Vert_{F}:=\Vert
tf(1/t)\Vert_{G}$. It is plain that the operator $S$ is bounded on $G$ if and
only if the operator $Tf(t)=f(t^{2})/t$ is bounded on $F$. Therefore,
combining with Theorem~\ref{StrExtrAbst}, we have
\begin{equation}
\Vert b\Vert_{\langle\langle{B_{0},B_{1}}\rangle\rangle_{G}^{K}}=\Vert
b\Vert_{\langle{B_{1},B_{0}}\rangle_{F}^{K}}\asymp\left\Vert t\cdot\Vert
b\Vert_{\langle B_{1},B_{0}\rangle_{\theta(t),q}^{K\blacktriangleleft}%
}\right\Vert _{F}=\left\Vert \Vert b\Vert_{\langle B_{1},B_{0}\rangle
_{\theta(1/t),q}^{K\blacktriangleleft}}\right\Vert _{G}\text{,} \label{pre}%
\end{equation}
where $\theta(t)=1+\frac{1}{2\log(t/e)}$. On the other hand, it is easy to see
that (see also \cite[Theorem~3.4.1(a)]{BL}) $\langle B_{1},B_{0}%
\rangle_{\theta,q}^{K\blacktriangleleft}=\langle\langle B_{0},B_{1}%
\rangle\rangle_{1-\theta,q}^{K\blacktriangleleft}$ for all $0<\theta<1$ and
$1\leq q\leq\infty,$ isometrically. Therefore, if we insert this information
back in (\ref{pre}) and observe that $1-\theta(1/t)=\eta(t),$ we obtain%
\[
\Vert b\Vert_{\langle\langle{B_{0},B_{1}}\rangle\rangle_{G}^{K}}%
\asymp\left\Vert \Vert b\Vert_{\langle\langle B_{0},B_{1}\rangle
\rangle_{1-\theta(1/t),q}^{K\blacktriangleleft}}\right\Vert _{G}=\left\Vert
\Vert b\Vert_{\langle\langle B_{0},B_{1}\rangle\rangle_{\eta(t),q}%
^{K\blacktriangleleft}}\right\Vert _{G},
\]
as desired. Moreover, proceeding as in Remark \ref{Zam3}, we see that, if
$B_{0}\subset B_{1}$, we can replace the scale $\{\langle\langle\vec{B}%
\rangle\rangle_{\eta(t),q}^{K\blacktriangleleft}\}$ in \eqref{BasEqa} by the
scale $\{\vec{B}_{\eta(t),q}^{K\blacktriangleleft}\}$; the same considerations
apply when dealing with scales where $q(t):\,[1,\infty)\rightarrow
\lbrack1,\infty]$ is an arbitrary continuous function.
\end{remark}

\begin{remark}
\label{remDiscrete} The proof of Theorem \ref{StrExtrAbst} is based on
two-sided pointwise estimates, as a consequence, in the context of Remark
\ref{Zam1a}, we can replace the lattice $G$ of functions defined on
$[1,\infty)$ by the canonical Banach lattice of sequences $G_{d}$ modelled on
$G$. Since this remark will be useful later in Section \ref{sec:scha}, we now
develop this point in detail. We let $G_{d}$ be the sequence space defined by%
\[
\left\Vert \left\{  \xi_{n}\right\}  _{n=1}^{\infty}\right\Vert _{G_{d}%
}=\left\Vert \sum\limits_{n=1}^{\infty}\xi_{n}\chi_{\lbrack n,n+1)}%
(t)\right\Vert _{G}.
\]
$G_{d}$ is a Banach sequence lattice, and a number of properties of $G_{d}$
can be derived from corresponding properties of $G.$ In particular, we see
that $\ell^{\infty}(\mathbb{N})\subset G_{d}\subset\ell^{\infty}%
(1/n)(\mathbb{N}).$ Moreover, in this context, the discrete version of the
operator $S$ can be defined by $S_{d}f(n):=f(n^{2}),$ and we readily see that
if $S$ is bounded on $G$ we have that $S_{d}$ is bounded on $G_{d}$.
Consequently, for every Banach pair $\vec{B}$ we have
\[
{\Vert\{K(n,f;\vec{B})\}_{n=1}^{\infty}\Vert}_{G_{d}}\asymp\left\Vert
\bigl\{\Vert f\Vert_{\langle\langle B_{0},B_{1}\rangle\rangle_{\eta
(n),q(n)}^{K\blacktriangleleft}}\bigr\}_{n=1}^{\infty}\right\Vert _{G_{d}},
\]
where $\eta(n)=\frac{1}{2\log(en)}$ and $q(n):\,\mathbb{N}\rightarrow
\lbrack1,\infty]$ is an arbitrary function.
\end{remark}

Our next result shows that, if we impose more conditions on the parameter
space $F$, we can obtain a converse to Theorem~\ref{StrExtrAbst}.

We shall say that a Banach pair $\vec{A}$ is \textit{$Conv_{0}$-abundant on
$[0,1],$} if there is a constant $C>0$ such that for every concave increasing
function $f$ on $[0,1]$ such that $\underset{t\rightarrow0}{\lim}f(t)=0,$ one
can find $a\in A_{0}+A_{1}$ that satisfies the inequality
\[
C^{-1}f(t)\leq K(t,a;\vec{A})\leq Cf(t),\;\;0\leq t\leq1.
\]

\begin{example}
\label{exampleabundant}T\textit{he pair }$(L^{1}[0,1],L^{\infty}%
[0,1])$\textit{\ is $Conv_{0}$-abundant on $[0,1]$ (cf. \cite{BK}).}
\end{example}

\begin{theorem}
\label{ExtrMinLat} Let $F$ be an interpolation Banach lattice on $[0,1]$ with
respect to the pair $({L}^{\infty},{L}^{\infty}(1/t))$. Then, the following
conditions are equivalent:

(a) the operator $Tf(t)=f(t^{2})/t$ is bounded on $F$;

(b) for every Banach pair $\vec{A}=(A_{0},A_{1})$ and for each $q\in
\lbrack1,\infty],$ the equivalence \eqref{BasEq} holds for the space
{$\langle\vec{A}\rangle_{F}^{K}$};

(c) for every Banach pair $\vec{A}=(A_{0},A_{1})$, we have, with constants
independent of $a\in\langle\vec{A}\rangle_{F}^{K}+\tilde{A}_{1}$, $s>0$ and
$q\in\lbrack1,\infty]$,%
\[
K(s,a;{\langle\vec{A}\rangle_{F}^{K}},\tilde{A}_{1})\asymp K(s,t\cdot\Vert
a\Vert_{\langle\vec{A}\rangle_{\theta(t),q}^{K\blacktriangleleft}}%
;F,{L}^{\infty}(1/t)),
\]
where $\theta(t)=1+\frac{1}{2\log(t/e)},t\in(0,1)$;

(d) there exist a Banach pair ${\vec{A}}$ that is $Conv_{0}$-abundant on
$[0,1]$ and $q\in\lbrack1,\infty]$ such that equivalence \eqref{BasEq} holds
for the space {$\langle\vec{A}\rangle_{F}^{K}$}.
\end{theorem}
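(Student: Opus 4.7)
The plan is to prove (a) $\Leftrightarrow$ (b) $\Leftrightarrow$ (d) by the cycle (a) $\Rightarrow$ (b) $\Rightarrow$ (d) $\Rightarrow$ (a), and then to splice in (c) through (a) $\Rightarrow$ (c) $\Rightarrow$ (b). The implication (a) $\Rightarrow$ (b) is Theorem~\ref{StrExtrAbst}: the extra hypothesis that $F$ is an interpolation space with respect to $(L^\infty, L^\infty(1/t))$ lets Remark~\ref{Zam1} take over, rendering the equivalence constants in \eqref{BasEq} dependent only on the norm of $T$ on $F$ and the interpolation constant of $F$, and in particular uniform in $q$. The implication (b) $\Rightarrow$ (d) is immediate from Example~\ref{exampleabundant}. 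For (a) $\Rightarrow$ (c), view (c) as a Holmstedt-type formula for the pair $(\langle \vec A\rangle_F^K, \tilde A_1)$: given a decomposition $a = a_0 + a_1$ with $a_0 \in \langle \vec A\rangle_F^K$, $a_1 \in \tilde A_1$, replace $K(\cdot,a;\vec A)$ by the compatible sum $K(\cdot, a_0;\vec A) + K(\cdot, a_1;\vec A)$, apply the extrapolation description from (b) to $a_0$, and control the $a_1$ contribution in the $L^\infty(1/t)$-component via $\|a_1\|_{\tilde A_1} = \sup_t K(t, a_1;\vec A)/t$; the reverse inequality is obtained dually by lifting a near-optimal decomposition of $g_a(t) := t \|a\|_{\langle \vec A\rangle_{\theta(t), q}^{K\blacktriangleleft}}$ inside $F + L^\infty(1/t)$ back to a decomposition of $a$. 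Finally, (c) $\Rightarrow$ (b) follows by letting $s \to \infty$ in (c), since $K(s, \cdot; X, Y) \nearrow \|\cdot\|_X$ on $X$.

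The main obstacle is (d) $\Rightarrow$ (a). Fix $q$ as in (d), and let $\varphi$ be any concave, increasing function on $[0,1]$ with $\varphi(0^+) = 0$. By $Conv_0$-abundance pick $a \in A_0 + A_1$ with $K(\cdot, a;\vec A) \asymp \varphi$. Substituting $s = t^2$, $\theta = \theta(t)$ into the elementary lower bound
\[
\|a\|_{\langle \vec A\rangle_{\theta, q}^{K\blacktriangleleft}} \ge \theta^{1/q} s^{-\theta} K(s, a;\vec A)
\]
recorded in the proof of Theorem~\ref{StrExtrAbst}, and using that
\[
t^{1 - 2\theta(t)} = t^{-1} e^{-\log t/\log(t/e)} \ge e^{-1} t^{-1}, \qquad 0 < t < 1,
\]
together with $\theta(t) \ge 1/2$ on $(0,1)$, one obtains the pointwise bound
\[
t\,\|a\|_{\langle \vec A\rangle_{\theta(t), q}^{K\blacktriangleleft}} \ge c_0\, T\varphi(t), \qquad t \in (0,1).
\]
Taking $F$-norms and invoking the equivalence of (d) yields $\|T\varphi\|_F \le C \|\varphi\|_F$ on the cone of such $\varphi$.

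To pass from this cone to arbitrary $f \in F$, exploit the interpolation assumption on $F$. Since $|Tf| = T|f|$ one may assume $f \ge 0$. The positive sublinear operators $Pf(t) = \sup_{s \le t} f(s)$ and $Qf(t) = t \sup_{s \ge t} f(s)/s$ are bounded on both $L^\infty$ and $L^\infty(1/t)$, hence on $F$, so the least quasi-concave majorant $\tilde f = \max(Pf, Qf)$ satisfies $\|\tilde f\|_F \lesssim \|f\|_F$ and $Tf \le T\tilde f$. If $c := \tilde f(0^+) > 0$, split $\tilde f = c + g$; then $T(c) = c/t \in L^\infty(1/t) \subset F$ with $\|T(c)\|_F \lesssim c \lesssim \|\tilde f\|_F$, while $g \ge 0$ and $g(0^+) = 0$, and its least quasi-concave majorant $\hat g$ also vanishes at $0^+$ (verified via the bound $t \sup_{s \ge t} g(s)/s \to 0$ using $g(0^+) = 0$), so step one applies to $\hat g$ and controls $Tg \le T\hat g$. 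Verifying that $\hat g(0^+) = 0$ and that each majoration step stays inside $F$ is the most delicate book-keeping, but follows from the same interpolation argument used for $P$ and $Q$; this is the part of the proof where the interpolation hypothesis on $F$ is genuinely exploited.
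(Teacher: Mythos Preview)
Your cycle (a) $\Rightarrow$ (b) $\Rightarrow$ (d) $\Rightarrow$ (a) is sound and essentially matches the paper; your treatment of (d) $\Rightarrow$ (a) differs only in that the paper, instead of splitting off a constant, simply observes that the existence of a quasi-concave $\tilde f\in F$ with $\tilde f(0^+)>0$ forces $F=L^\infty$, and then shows directly that $F=L^\infty$ is incompatible with \eqref{BasEq} for any $Conv_0$-abundant pair (it would give $\vec A_{1/2,\infty}^{K\blacktriangleleft}=A_0+A_1$). Your splitting argument is a legitimate alternative and avoids that contradiction step.

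The gap is in the reverse inequality of (a) $\Rightarrow$ (c). You propose to take a near-optimal decomposition of $g_a(t)=t\|a\|_{\langle\vec A\rangle_{\theta(t),q}^{K\blacktriangleleft}}$ in $F+L^\infty(1/t)$ and ``lift'' it to a decomposition of $a$. There is no mechanism for this: $g_a$ depends on $a$ in a highly nonlocal way, and an arbitrary splitting $g_a=h_0+h_1$ carries no information about how to split $a$. The paper circumvents this in two moves. First, since $T$ is bounded on both $F$ and $L^\infty(1/t)$ (with norm $1$ on the latter), $T$ is bounded on each space $\Sigma_s:=(F+L^\infty(1/t),\,K(s,\cdot;F,L^\infty(1/t)))$ uniformly in $s$; applying Theorem~\ref{StrExtrAbst} with the parameter $\Sigma_s$ gives
\[
K(s,g_a;F,L^\infty(1/t))\asymp K\bigl(s,K(\cdot,a;\vec A);F,L^\infty(1/t)\bigr),
\]
which replaces $g_a$ by the much more tractable function $K(\cdot,a;\vec A)$. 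Second, for the latter $K$-functional one takes a near-optimal splitting $K(\cdot,a;\vec A)=f_0+f_1$, passes to the quasi-concave majorants $\tilde f_i$ (bounded on $F$ by the interpolation hypothesis), and then invokes the Brudny\u{\i}--Krugljak $K$-divisibility theorem to produce $a=a_0+a_1$ with $K(\cdot,a_i;\vec A)\le D\tilde f_i$. Both the $\Sigma_s$-trick and $K$-divisibility are missing from your sketch; without them the reverse direction of (c) does not go through.
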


\begin{proof}
Since the implication $(a)\Rightarrow(b)$ was proved in
Theorem~\ref{StrExtrAbst}, and the implications $(c)\Rightarrow(d)$ and
$(b)\Rightarrow(d)$ are trivial (for example, to prove $(c)\Rightarrow(d)$
simply apply $(c)$ to the pair $\vec{A}=(L^{1}(0,1),L^{\infty}(0,1))$ and
$s=1$), it therefore only remains to prove that $(a)\Rightarrow(c)$ and
$(d)\Rightarrow(a)$.

$(a)\Rightarrow(c)$. It is easy to verify that the norm of the operator $T$ on
the space ${L}^{\infty}(1/t)$ equals one. Moreover, since $T$ is bounded on
$F,$ we see that for each fixed $s>0$, $K(s,Tf;F,{L}^{\infty}(1/t))\leq
\max\{1,\Vert T\Vert_{F\rightarrow F}\}K(s,f;F,{L}^{\infty}(1/t))$. In other
words, if for each $s>0,$ we denote by $\Sigma_{s}$ the space ${L}^{\infty
}(1/t)+F$ endowed with the norm $K(s,\cdot;F,{L}^{\infty}(1/t)),$ then $T:$
$\Sigma_{s}\rightarrow\Sigma_{s}$ is bounded, and the norm of $T$ on
$\Sigma_{s}$ does not exceed $\max\{1,\Vert T\Vert_{F\rightarrow F}\}.$
Consequently, if we apply Theorem \ref{StrExtrAbst} using the functional
parameter $\Sigma_{s},$ then, for any pair $\vec{A},$ we have (with absolute
constants independent of $a$ and $s$; cf. Remark \ref{Zam1})
\begin{align*}
K(s,K(\cdot,a;\vec{A});F,{L}^{\infty}(1/t))  &  =\Vert a\Vert_{{\langle\vec
{A}\rangle_{\Sigma_{s}}^{K}}}\asymp\left\Vert t\cdot\Vert a\Vert_{\langle
\vec{A}\rangle_{\theta(t),q}^{K\blacktriangleleft}}\right\Vert _{\Sigma_{s}}\\
&  \asymp K(s,t\cdot\Vert a\Vert_{\langle\vec{A}\rangle_{\theta(t),q}%
^{K\blacktriangleleft}};F,{L}^{\infty}(1/t)).
\end{align*}
Therefore, it suffices to check that for all $s>0$ we have,
\begin{equation}
K(s,K(\cdot,a;\vec{A});F,{L}^{\infty}(1/t))\asymp K(s,a;{\langle\vec{A}%
\rangle_{F}^{K},}\tilde{A}_{1}). \label{EQ101a}%
\end{equation}

Let us consider an arbitrary representation $a=a_{0}+a_{1}$, $a_{0}\in
{\langle\vec{A}\rangle_{F}^{K}}$, $a_{1}\in\tilde{A}_{1}$. Then,
\begin{align*}
K(s,K(\cdot,a;\vec{A});F,{L}^{\infty}(1/t))  &  \leq K(s,\left(  K(\cdot
,a_{0};\vec{A})+K(\cdot,a_{1};\vec{A})\right)  ;F,{L}^{\infty}(1/t))\\
&  \leq\Vert K(\cdot,a_{0};\vec{A})\Vert_{F}+s\Vert K(\cdot,a_{1};\vec
{A})\Vert_{{L}^{\infty}(1/t)}\\
&  \leq\Vert K(\cdot,a_{0};\vec{A})\Vert_{F}+s\cdot\sup_{0<t\leq1}%
\frac{K(t,a_{1};\vec{A})}{t}\\
&  =\Vert a_{0}\Vert_{{\langle\vec{A}\rangle_{F}^{K}}}+s\Vert a_{1}%
\Vert_{\tilde{A}_{1}}.
\end{align*}
Consequently, taking the infimum over all admissible representations, we
obtain{%
\[
K(s,K(\cdot,a;\vec{A});F,{L}^{\infty}(1/t))\leq K(s,a;{\langle\vec{A}%
\rangle_{F}^{K},}\tilde{A}_{1}).
\]
}

We now prove the reverse inequality. By the definition of the $K$-functional,
we can select a decomposition of $K(t,a;\vec{A})$ such that
\begin{equation}
K(t,a;\vec{A})=f_{0}(t)+f_{1}(t),\text{ with }f_{0}\in F\text{ and }f_{1}%
\in{L}^{\infty}(1/t) \label{durban-}%
\end{equation}
and%
\begin{equation}
K(s,K(\cdot,a;\vec{A});F,{L}^{\infty}(1/t))\geq\frac{1}{2}\left(  \Vert
f_{0}\Vert_{F}+s\Vert f_{1}\Vert_{{L}^{\infty}(1/t)}\right)  . \label{durban}%
\end{equation}
Consider the mapping $f\longmapsto\tilde{f}$, where
\begin{equation}
\tilde{f}(t):=\underset{0<s\leq1}{\sup}\min\{1,t/s\}|f(s)|,\text{\quad}%
0<t\leq1\text{.} \label{maping}%
\end{equation}
It can be readily verified that $\tilde{f}$ is quasi-concave, and $\tilde
{f}(t)\geq|f(t)|$, $0<t\leq1.$ Moreover, since the mapping $f\longmapsto
\tilde{f}$ is bounded on both, ${L}^{\infty}$ and ${L}^{\infty}(1/t)$, \ we
find, by interpolation, that $f\longmapsto\tilde{f}$ is bounded on $F$. Hence,
it follows from (\ref{durban}) that%
\begin{equation}
K(s,K(\cdot,a;\vec{A});F,{L}^{\infty}(1/t))\geq c\left(  \Vert\tilde{f_{0}%
}\Vert_{F}+s\Vert\tilde{f_{1}}\Vert_{{L}^{\infty}(1/t)}\right)  .
\label{delet}%
\end{equation}
On the other hand, by (\ref{durban-}), we have
\[
K(t,a;\vec{A})\leq\tilde{f}_{0}(t)+\tilde{f}_{1}(t),\text{\ \ }0<t\leq
1\text{.}%
\]
Hence, on account of the $K$-divisibility property (we extend the functions
$\tilde{f}_{i}$, $i=0,1$, to the half-line $[0,\infty)$ setting $\tilde{f}%
_{i}(t)=\frac{\tilde{f}_{i}(1)K(t,a;\vec{A})}{K(1,a;\vec{A})}$ for $t>1$), we
can find $a_{0}\in A_{0}$ and $a_{1}\in A_{1}$ such that $a=a_{0}+a_{1}$ and
\[
K(t,a_{i};\vec{A})\leq D\tilde{f}_{i}(t)\;\;\mbox{for all}\;\;0<t\leq
1\;\mbox{and}\;i=0,1,
\]
where $D$ is a universal constant. Since $F$ and ${L}^{\infty}(1/t)$ are
lattices$,$ we get
\[
\left\Vert K(\cdot,a_{0};\vec{A})\right\Vert _{F}\leq D\left\Vert \tilde
{f}_{0}\right\Vert _{F}\;\text{ and }\;\left\Vert K(\cdot,a_{1};\vec
{A})\right\Vert _{{L}^{\infty}(1/t)}\leq D\left\Vert \tilde{f}_{1}\right\Vert
_{{L}^{\infty}(1/t)}.
\]
Combining with (\ref{delet}), {we obtain
\begin{align*}
K(s,K(\cdot,a;\vec{A});F,{L}^{\infty}(1/t))  &  \geq c\left(  \Vert
K(\cdot,a_{0};\vec{A})\Vert_{F}+s\Vert K(\cdot,a_{1};\vec{A})\Vert
_{{L}^{\infty}(1/t)}\right) \\
&  =c\left(  \Vert a_{0}\Vert_{\vec{A}_{F}^{K}}+s\Vert a_{1}\Vert_{\tilde
{A}_{1}}\right) \\
&  \geq cK(s,a;\vec{A}_{F}^{K},\tilde{A}_{1}),
\end{align*}
} concluding the proof of \eqref{EQ101a}.

$(d)\Rightarrow(a)$. Let $\vec{A}$ be a $Conv_{0}$-abundant on $[0,1]$ Banach
pair, and let $1\leq q\leq\infty$ be such that the space $\langle\vec
{A}\rangle_{F}^{K}$ satisfies \eqref{BasEq}. The argument in (\ref{Basaux})
above shows that
\[
t\cdot\Vert a\Vert_{\langle\vec{A}\rangle_{\theta(t),q}^{K\blacktriangleleft}%
}\geq\frac{1}{2{e}t}K(t^{2},a;\vec{A}).
\]
Applying the $F$-norm, and using the hypothesis, we obtain
\[
\Vert t^{-1}K(t^{2},a;\vec{A})\Vert_{F}\leq2e\left\Vert t\cdot\Vert
a\Vert_{\langle\vec{A}\rangle_{\theta(t),q}^{K\blacktriangleleft}}\right\Vert
_{F}\leq C\Vert K(t,a;\vec{A})\Vert_{F}\text{.}%
\]
Consequently, since $\vec{A}$ is a $Conv_{0}$-abundant pair on $[0,1],$ and
$T$ is a monotone linear operator, it follows that for every concave
increasing function $f\in{F}$ such that $\underset{t\rightarrow0}{\lim
}f(t)=0,$ we have
\begin{equation}
\Vert{T}f\Vert_{{F}}\leq C\Vert f\Vert_{{F}}. \label{EQ112ab}%
\end{equation}
We now rule out the possibility that there could exist a concave increasing
function $f_{0},$ such that $\underset{t\rightarrow0}{\lim}f_{0}(t)>0$ and
$f_{0}$ $\in{F.}$ For, if this were the case, then it would follow that
${F}={L}^{\infty}$. But if ${F}={L}^{\infty}$ we can show that the equivalence
\eqref{BasEq} fails for every $Conv_{0}$-abundant Gagliardo complete pair
$\vec{A}.$ Indeed, we first observe that
\[
\left\Vert a\right\Vert _{\langle\vec{A}\rangle_{{L}^{\infty}}^{K}}%
=\sup_{0<t\leq1}K(t,a;\vec{A})=\left\Vert a\right\Vert _{A_{0}+A_{1}},
\]
whence $\langle\vec{A}\rangle_{{L}^{\infty}}^{K}=A_{0}+A_{1}$. Suppose now
that, to the contrary, \eqref{BasEq} holds. Then applying successively
\eqref{embeddings}, \eqref{equivK}, and the fact that $\theta(t)\geq1/2$ for
all $0<t\leq1$, yields
\begin{align*}
\left\Vert a\right\Vert _{A_{0}+A_{1}}  &  \asymp\left\Vert t\cdot\Vert
a\Vert_{\langle\vec{A}\rangle_{\theta(t),q}^{K\blacktriangleleft}}\right\Vert
_{{L}^{\infty}}\\
&  \geq\frac{1}{2}\left\Vert t\cdot\Vert a\Vert_{\langle\vec{A}\rangle
_{\theta(t),\infty}^{K\blacktriangleleft}}\right\Vert _{{L}^{\infty}}\\
&  \geq\frac{1}{2}\liminf_{t\rightarrow1}\Vert a\Vert_{\vec{A}_{\theta
(t),\infty}^{K\blacktriangleleft}}\\
&  \geq\frac{1}{2}\Vert a\Vert_{\vec{A}_{1/2,\infty}^{K\blacktriangleleft}}.
\end{align*}
This implies that $\vec{A}_{1/2,\infty}^{K\blacktriangleleft}=A_{0}+A_{1}$
and, consequently, for all $a\in A_{0}+A_{1}$ there exists $C_{a}$ such that
$K(t,a;\vec{A})\leq C_{a}\sqrt{t}$. This obviously contradicts the assumption
that the pair $\vec{A}$ is $Conv_{0}$-abundant. As a result it follows that
$F\neq{L}^{\infty}$ and hence \eqref{EQ112ab} is fulfilled for all concave
increasing functions $f$ $\in{F}$. Moreover, since every quasi-concave
function $f$ is equivalent to its least concave majorant\footnote{in fact we
have $f\leq\mathbf{f}\leq2f$ (see e.g., \cite[Ch.~II, \S \,1, Corollary after
Theorem~1.1]{KPS}).} $\mathbf{f}$, the inequality \eqref{EQ112ab} can be
easily extended to the set of all quasi-concave functions.

Finally, recall that the mapping $f\longmapsto\tilde{f}$ (see \eqref{maping})
is bounded on ${F}$, $\tilde{f}(t)\geq|f(t)|$ if $0<t\leq1,$ and $\tilde{f}$
is a quasi-concave function for each $f$ (cf. \cite{BK}). Since the operator
$T$ is monotone, the properties of $\tilde{f}$ permit us to extend the
inequality \eqref{EQ112ab} to the whole lattice ${F}$, and therefore complete
the proof.
\end{proof}

Theorem~\ref{StrExtrAbst} can be strengthened when a given \textit{ordered}
pair $(A_{0},A_{1})$ is Gagliardo complete, i.e., when the smaller space
$A_{1}$ is Gagliardo complete with respect to the larger space $A_{0}$.

\begin{theorem}
\label{StrExtrAbst2} Let $\vec{A}=(A_{0},A_{1})$ be a Gagliardo complete
ordered pair. Let $F$ be a parameter for the $K$-method on $[0,1]$, and,
moreover, suppose that the operator $Tf(t)=f(t^{2})/t$ is bounded on $F$.
Then, for every family $\{I_{\theta}\}_{\theta\in(0,1)}$ of exact
interpolation functors of exponent\footnote{i.e. for each $\theta,$ the
characteristic function of $I_{\theta}$ is $t^{\theta}.$} $\theta,$ we have,%
\[
\Vert a\Vert_{{\langle\vec{A}\rangle_{F}^{K}}}\asymp\left\Vert t\cdot\Vert
a\Vert_{I_{\theta(t)}(\vec{A})}\right\Vert _{F}\text{,}%
\]
where $\theta(t)=1+\frac{1}{2\log(t/e)}$, $0<t\leq1,$ and the constants of
this equivalence are universal.
\end{theorem}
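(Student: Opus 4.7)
The strategy is to sandwich the general exact interpolation functor $I_{\theta(t)}(\vec{A})$ between a $J$-space and a $K$-space sharing the same exponent $\theta(t)$, and then reduce to Theorem~\ref{StrExtrAbst} applied at $q=1$ and $q=\infty$. Since $\theta(t) = 1 + \frac{1}{2\log(t/e)} \in [1/2, 1)$ for $t \in (0,1]$, the critical technical point will be that all the ``universal'' estimates available for normalized real interpolation spaces hold with constants independent of the $\theta$ chosen from this range.

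\textbf{Step 1 (sandwich between $J$ and $K$).} Because $I_\theta$ has characteristic function $t^\theta$ and is exact, the standard interpolation inequality $K(s,a;\vec{A}) \leq s^\theta \Vert a\Vert_{I_\theta(\vec{A})}$ gives the left-hand embedding $\Vert a\Vert_{\vec{A}_{\theta,\infty}^{K\blacktriangleleft}} \leq \Vert a\Vert_{I_\theta(\vec{A})}$ (the normalizing constant $c_{\theta,\infty}=1$). For the right-hand side, any $a \in \vec{A}_{\theta,1}^{J\blacktriangleleft}$ admits an integral representation $a = \int_0^\infty u(s)\, ds/s$ witnessing its norm; applying $I_\theta$ termwise gives $\Vert a\Vert_{I_\theta(\vec{A})} \lesssim \Vert a\Vert_{\vec{A}_{\theta,1}^{J\blacktriangleleft}}$ with a universal constant. (Gagliardo completeness enters here to guarantee that these $J$-representations genuinely recover the norm and that $A_0 \cap A_1$ is large enough for the argument to go through on all of $\langle\vec{A}\rangle_F^K$.)

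\textbf{Step 2 (swap $J$ for $K$).} The equivalence theorem for the real method, in its \emph{normalized} form, gives $\Vert a\Vert_{\vec{A}_{\theta,1}^{J\blacktriangleleft}} \asymp \Vert a\Vert_{\vec{A}_{\theta,1}^{K\blacktriangleleft}}$ with a constant independent of $\theta \in [1/2, 1)$ (that is precisely the purpose of the normalizations (\ref{ck}) and (\ref{cj}); see also footnote 10). Chaining with Step~1, for every $t \in (0,1]$ and every exact functor $I_{\theta(t)}$ of exponent $\theta(t)$,
\[
\Vert a\Vert_{\vec{A}_{\theta(t),\infty}^{K\blacktriangleleft}} \;\lesssim\; \Vert a\Vert_{I_{\theta(t)}(\vec{A})} \;\lesssim\; \Vert a\Vert_{\vec{A}_{\theta(t),1}^{K\blacktriangleleft}},
\]
with universal implied constants.

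\textbf{Step 3 (apply Theorem~\ref{StrExtrAbst}).} Multiplying by $t$ and taking $\Vert\cdot\Vert_F$, I obtain
\[
\bigl\Vert t \Vert a\Vert_{\vec{A}_{\theta(t),\infty}^{K\blacktriangleleft}} \bigr\Vert_F \;\lesssim\; \bigl\Vert t \Vert a\Vert_{I_{\theta(t)}(\vec{A})} \bigr\Vert_F \;\lesssim\; \bigl\Vert t \Vert a\Vert_{\vec{A}_{\theta(t),1}^{K\blacktriangleleft}} \bigr\Vert_F.
\]
Since $\vec{A}$ is ordered, Remark~\ref{Zam3} permits replacing the scale $\{\vec{A}_{\theta(t),q}^{K\blacktriangleleft}\}$ by $\{\langle\vec{A}\rangle_{\theta(t),q}^{K\blacktriangleleft}\}$ (for both $q=1$ and $q=\infty$) up to universal constants. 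Theorem~\ref{StrExtrAbst} then says both outer terms are $\asymp \Vert a\Vert_{\langle\vec{A}\rangle_F^K}$, completing the proof.

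The main obstacle I anticipate is Step~2: making sure that the equivalence of the normalized $J$- and $K$-functors really is uniform as $\theta \to 1^-$ (recall $\theta(t) \to 1$ as $t \to 0^+$). This is where the carefully chosen normalizing factors and the Gagliardo completeness hypothesis pull their weight, preventing the equivalence constants from blowing up at the endpoint. The rest reduces to invoking the already-established Theorem~\ref{StrExtrAbst} and the bookkeeping of Remark~\ref{Zam3}.
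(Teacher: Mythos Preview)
Your outline matches the paper's proof: sandwich $I_\theta(\vec{A})$ between $\vec{A}_{\theta,1}^{J}$ and $\vec{A}_{\theta,\infty}^{K}$ via \eqref{exact functor}, pass from the $J$-norm to the normalized $K$-norm uniformly in $\theta$, and then invoke Theorem~\ref{StrExtrAbst} at $q=1$ and $q=\infty$ together with the ordered-pair bookkeeping of Remark~\ref{Zam3} (equivalently \eqref{equivK}).

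The one place that needs sharpening is Step~2. The normalizations (\ref{ck}) and (\ref{cj}) by themselves do \emph{not} force $\Vert a\Vert_{\vec{A}_{\theta,1}^{J\blacktriangleleft}} \asymp \Vert a\Vert_{\vec{A}_{\theta,1}^{K\blacktriangleleft}}$ uniformly in $\theta$; the textbook equivalence theorem \cite[Theorem~3.3.1]{BL} carries $\theta$-dependent constants. What is actually needed, and only in one direction, is the embedding $\theta(1-\theta)\,\vec{A}_{\theta,1}^{K}\overset{C}{\subset}\vec{A}_{\theta,1}^{J}$ with $C$ independent of $\theta$, and \emph{this} is where Gagliardo completeness is used---not in Step~1, where $\vec{A}_{\theta,1}^{J}\overset{1}{\subset} I_\theta(\vec{A})$ holds with no hypothesis on the pair. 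The paper derives this uniform embedding from the strong form of the fundamental lemma \cite{CJM90}: any $a$ admits a representation $a=\int_0^\infty u(s)\,\frac{ds}{s}$ with $\int_0^\infty\min(1,t/s)\,J(s,u(s);\vec{A})\,\frac{ds}{s}\le\gamma K(t,a;\vec{A})$ for all $t>0$; multiplying by $t^{-\theta}$, integrating against $\frac{dt}{t}$, and using $\int_0^\infty t^{-\theta}\min(1,t/s)\,\frac{dt}{t}=\frac{s^{-\theta}}{\theta(1-\theta)}$ yields $\Vert a\Vert_{\vec{A}_{\theta,1}^{J}}\le\gamma\,\theta(1-\theta)\,\Vert a\Vert_{\vec{A}_{\theta,1}^{K}}$. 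With this in hand your Steps~1 and~3 go through as written.
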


\begin{proof}
It is well-known that (cf. \cite[p.~11, (2.10)]{JM91}),
\begin{equation}
\vec{A}_{\theta,1}^{J}\overset{1}{\subset}I_{\theta}(\vec{A}%
)\overset{1}{\subset}\vec{A}_{\theta,\infty}^{K}\text{.} \label{exact functor}%
\end{equation}

Moreover, we also have $\vec{A}_{\theta,\infty}^{K}=\langle\vec{A}%
\rangle_{\theta,\infty}^{K\blacktriangleleft}$ and $\vec{A}_{\theta,1}%
^{J}=\vec{A}_{\theta,1}^{J\blacktriangleleft}$ isometrically and, using
\eqref{equivK}, we see that for $1/2\leq\theta<1$ {the embedding $\langle
\vec{A}\rangle_{\theta,1}^{K\blacktriangleleft}\overset{2}{\subset}\vec
{A}_{\theta,1}^{K\blacktriangleleft}$ holds}. Therefore, the desired result
would follow from Theorem~\ref{StrExtrAbst} (see also Remark~\ref{Zam1}) and
embeddings~\eqref{exact functor} if we could show that, for some constant
$C>0,$ independent of $\theta\in(0,1),$ it holds that%
\[
\vec{A}_{\theta,1}^{K\blacktriangleleft}\overset{C}{\subset}\vec{A}_{\theta
,1}^{J}\text{,}%
\]
or equivalently,
\begin{equation}
\theta(1-\theta)\vec{A}_{\theta,1}^{K}\overset{C}{\subset}\vec{A}_{\theta
,1}^{J}\text{.} \label{fromprevious}%
\end{equation}
The embedding (\ref{fromprevious}) is known (cf. \cite[page 34 line 5]{JM91},
a more recent proof is given in \cite[Theorem 1]{CFKK}). For the sake of
completeness we prove (\ref{fromprevious}) using the argument implicit in
\cite{JM91}. By hypothesis, $\vec{A}$ is a mutually closed pair and, moreover,
$A_{1}$ is dense in $\vec{A}_{\theta,1}^{J}$. Therefore, by the strong form of
the fundamental lemma (cf. \cite{CJM90}), any element $a\in\vec{A}_{\theta
,1}^{J}$ can be represented by $a=\int_{0}^{\infty}u(s)\frac{ds}{s},$ with
\begin{equation}
\int\limits_{0}^{\infty}\min(1,t/s)J(s,u(s);\vec{A})\;\frac{ds}{s}\leq\gamma
K(t,a;\vec{A}),\text{\ \ }t>0\text{,} \label{j3}%
\end{equation}
where $\gamma$ is a universal constant independent of $a$. Consequently,%
\begin{align*}
\theta(1-\theta)\Vert a\Vert_{\vec{A}_{\theta,1}^{K}}  &  =\theta
(1-\theta)\int\limits_{0}^{\infty}t^{-\theta}K(t,a;\vec{A})\;\frac{dt}{t}\\
&  \geq\theta(1-\theta)\gamma^{-1}\int\limits_{0}^{\infty}t^{-\theta}%
\int\limits_{0}^{\infty}\min(1,t/s)J(s,u(s);\vec{A})\frac{ds}{s}\frac{dt}{t}\\
&  =\theta(1-\theta)\gamma^{-1}\int\limits_{0}^{\infty}\left(  \int%
\limits_{0}^{\infty}t^{-\theta}\min(1,t/s)\,\frac{dt}{t}\right)
J(s,u(s);\vec{A})\frac{ds}{s}\\
&  =\theta(1-\theta)\gamma^{-1}\int\limits_{0}^{\infty}\left(  \int%
\limits_{0}^{s}\frac{t^{1-\theta}}{s}\frac{dt}{t}+\int\limits_{s}^{\infty
}t^{-\theta}\frac{dt}{t}\right)  J(s,u(s);\vec{A})\frac{ds}{s}\\
&  =\theta(1-\theta)\gamma^{-1}\int\limits_{0}^{\infty}\frac{s^{-\theta}%
}{\theta(1-\theta)}J(s,u(s);\vec{A})\frac{ds}{s}\\
&  =\gamma^{-1}\int\limits_{0}^{\infty}s^{-\theta}J(s,u(s);\vec{A})\frac
{ds}{s}\geq\gamma^{-1}\Vert a\Vert_{\vec{A}_{\theta,1}^{J}}\text{.}%
\end{align*}
Therefore we have shown that (\ref{fromprevious}) holds, and the proof is complete.
\end{proof}

We conclude this section with an extrapolation description of the limiting
spaces associated with the $J$-method.

Let $\xi(t):=\frac{1}{2\log(e/t)}$, $0<t\leq1$, and $1\leq q\leq\infty$.
Furthermore, suppose that $\vec{A}$ is an ordered pair and $G$ is a parameter
for the $J$-method on $(0,1]$, i.e., a Banach lattice on $((0,1],\frac{dt}%
{t})$ such that $L^{1}(1/t)\subset G\subset L^{1}$. Denote by $\vec{A}%
_{\xi,q,G}^{J}$ the space of all $a\in A_{0}$ that admit a representation
\begin{equation}
a=\int_{0}^{1}u(t)\frac{dt}{t}\text{,} \label{represetation1}%
\end{equation}
where $u(t):(0,1]\rightarrow A_{1}$ is a strongly measurable function such
that $\Vert u(t)\Vert_{\langle\vec{A}\rangle_{\xi(t),q}^{J\blacktriangleleft}%
}\in{G}$. We provide $\vec{A}_{\xi,q,G}^{J}$\ with the quotient norm
\[
\Vert a\Vert_{\vec{A}_{\xi,q,G}^{J}}:=\inf\left\Vert \Vert u(t)\Vert
_{\langle\vec{A}\rangle_{\xi(t),q}^{J\blacktriangleleft}}\right\Vert
_{G}\text{,}%
\]
where the infimum is taken over all $u(t)$ satisfying \eqref{represetation1}.

Let $G^{\prime}$ be the K\"{o}the dual lattice to $G$ with respect to the
bilinear form
\[
(f,g):=\int_{0}^{1}f(t)g(t)\,\frac{dt}{t^{2}}.
\]

\begin{theorem}
\label{StrExtrAbst(J)} Let $G$ be a separable parameter for the $J$-method on
$[0,1]$, and let the operator $Rf(t):=f(\sqrt{t})$ be bounded on $G$. Suppose
that $\vec{A}=(A_{0},A_{1})$ is an ordered pair such that $A_{1}$ is dense in
$A_{0}$ and $A_{0}^{\ast}$ is dense in the space $\langle A_{1}^{\ast}%
,A_{0}^{\ast}\rangle_{G^{\prime}}^{K}$. Then, for each $q\in\lbrack1,\infty],$
we have, with equivalence of norms,%
\[
\langle\vec{A}\rangle_{G}^{J}=\vec{A}_{\xi,q,G}^{J}.
\]

\end{theorem}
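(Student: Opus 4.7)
The plan is to prove Theorem~\ref{StrExtrAbst(J)} by duality, reducing it to the already established $K$-method counterpart Theorem~\ref{StrExtrAbst}. The point is that, for an ordered pair $(A_0,A_1)$ with $A_1 \subset A_0$, the dual pair $(A_1^{\ast},A_0^{\ast})$ is also ordered (the smaller space being $A_0^{\ast}$), and the classical $J$-$K$ duality turns a $J$-method statement for $\vec{A}$ into a $K$-method statement for $(A_1^{\ast},A_0^{\ast})$ with the parameter flipped $\theta \mapsto 1-\theta$. Noting that $1-\theta(t) = \xi(t)$, where $\theta(t)=1+\tfrac{1}{2\log(t/e)}$ is the function appearing in Theorem~\ref{StrExtrAbst}, the parameters match perfectly.

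First, I would verify the compatibility of hypotheses. A change of variable $u=\sqrt{t}$ in the bilinear form $(f,g)=\int_0^1 f g\,dt/t^2$ shows that the adjoint of $R$ with respect to this pairing is (up to a constant) the operator $Tf(t) = f(t^2)/t$. Hence $R$ bounded on $G$ forces $T$ bounded on the Köthe dual $G'$, and the embeddings $L^1(1/t)\subset G \subset L^1$ dualize to $L^{\infty}(1/t) \subset G' \subset L^{\infty}$, so $G'$ is a legitimate parameter for the $K$-method on $[0,1]$. Second, I would establish the duality
\[
(\langle\vec{A}\rangle_G^{J})^{\ast} = \langle A_1^{\ast},A_0^{\ast}\rangle_{G'}^{K}
\]
with equivalence of norms. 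This is the abstract-lattice analogue of the classical Bergh--Löfström duality $(\vec{A}^J_{\theta,q})^{\ast} = (A_0^{\ast},A_1^{\ast})^K_{\theta,q'}$, whose proof adapts almost verbatim: the density of $A_1$ in $A_0$ provides the density of $A_0\cap A_1$ in $A_0$ required for the duality argument, separability of $G$ makes the Köthe dual coincide with the normed dual, and the dual norm naturally pairs a $J$-representation against a $K$-functional via $\int_0^1 \langle u(t),b\rangle\,dt/t$.

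Third, I would apply Theorem~\ref{StrExtrAbst} to the ordered pair $(A_1^{\ast},A_0^{\ast})$ with lattice $G'$; this gives
\[
\|b\|_{\langle A_1^{\ast},A_0^{\ast}\rangle_{G'}^{K}} \asymp \left\| t\cdot\|b\|_{\langle A_1^{\ast},A_0^{\ast}\rangle_{\theta(t),q'}^{K\blacktriangleleft}}\right\|_{G'}.
\]
Fourth, I would take preduals. The normalization constants in \eqref{ck} and \eqref{cj} are designed precisely so that $(\vec{A}_{\theta,q}^{J\blacktriangleleft})^{\ast} = (A_1^{\ast},A_0^{\ast})_{1-\theta,q'}^{K\blacktriangleleft}$ isometrically, and likewise for the ordered variants $\langle\cdot\rangle$ (using Proposition~\ref{Aprop2} to pass to the finite-interval form). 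Since $1-\theta(t) = \xi(t)$, the predual of the right-hand side is exactly the $J$-extrapolation norm $\|a\|_{\vec{A}_{\xi,q,G}^{J}}$ built from representations $a=\int_0^1 u(t)\,dt/t$ with $\|u(t)\|_{\langle\vec{A}\rangle_{\xi(t),q}^{J\blacktriangleleft}}\in G$. The density of $A_0^{\ast}$ in $\langle A_1^{\ast},A_0^{\ast}\rangle_{G'}^{K}$ guarantees that this $K$-space is norming for $\langle\vec{A}\rangle_G^{J}$, so the predual identification produces the desired equivalence $\langle\vec{A}\rangle_G^{J} = \vec{A}_{\xi,q,G}^{J}$.

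The main obstacle is the duality step. Although the calculation is classical in spirit, the version we need is for abstract parameter lattices $G$ and for the ordered-pair variant $\langle\cdot\rangle_G^J$ with integration restricted to $(0,1]$; this requires combining the strong fundamental lemma (as invoked in Section~\ref{sec:peetre}) with Proposition~\ref{Aprop2} to control the truncation, and verifying carefully that the normalization constants and the $\theta \leftrightarrow 1-\theta$ shift are respected, so that the asymptotic equivalence in the $K$-extrapolation formula transfers cleanly under predualization. A plausible alternative, if direct duality of the generalized spaces proves delicate, is to argue that it suffices to check the two inclusions on the dense subspace $A_1$ using explicit $J$-representations produced from \eqref{j3}, and then extend by continuity using the separability of $G$.
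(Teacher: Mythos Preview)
Your overall strategy---reducing to Theorem~\ref{StrExtrAbst} for the dual pair $(A_1^{\ast},A_0^{\ast})$ via $J$--$K$ duality---is the paper's approach, and you correctly observe that $R$ bounded on $G$ forces $T$ bounded on $G'$, that $1-\theta(t)=\xi(t)$, and that $(\langle\vec{A}\rangle_G^J)^{\ast}=\langle A_1^{\ast},A_0^{\ast}\rangle_{G'}^{K}$ (the paper cites \cite{BK} for this).

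The gap is your ``take preduals'' step. Equality of duals does not yield equality of the original spaces without additional input: either reflexivity, or a dense embedding of one into the other. You have not supplied this, nor have you actually identified $(\vec{A}_{\xi,q,G}^J)^{\ast}$ with the space normed by $\left\|t\cdot\|b\|_{\langle A_1^{\ast},A_0^{\ast}\rangle_{\theta(t),q'}^{K\blacktriangleleft}}\right\|_{G'}$; that is itself a nontrivial duality statement for a generalized extrapolation construction, and the paper never proves it.

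The paper fills this gap with three concrete steps your outline omits. First, it proves the embedding $\langle\vec{A}\rangle_G^J\subset\vec{A}_{\xi,q,G}^J$ directly (one line from \eqref{exact functor} and the estimate $t^{-\xi(t)}\le\sqrt{e}$). Second---and this is the main missing piece---it shows $\langle\vec{A}\rangle_G^J$ is \emph{dense} in $\vec{A}_{\xi,q,G}^J$: given $a=\int_0^1 u(t)\,dt/t$ one truncates to $a_\delta=\int_\delta^1 u(t)\,dt/t$ and must verify $a_\delta\in\langle\vec{A}\rangle_G^J$. This requires re-representing $a_\delta$ via a Fubini argument, which in turn needs the embedding $H_{\eta,\infty}\subset G$ of Lemma~\ref{lemmaH}; it is exactly here that boundedness of $R$ on $G$ itself (not merely of $T$ on $G'$) enters. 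Third, instead of dualizing $\vec{A}_{\xi,q,G}^J$, the paper establishes only the one-sided pairing $|\langle b,a\rangle|\le C\|b\|_{\langle A_1^{\ast},A_0^{\ast}\rangle_{G'}^{K}}\|a\|_{\vec{A}_{\xi,q,G}^J}$ by H\"older plus Theorem~\ref{StrExtrAbst}, then combines this with the known dual of $\langle\vec{A}\rangle_G^J$ and the density just proved, finishing by Hahn--Banach. Your alternative (``check on the dense subspace $A_1$'') is closer in spirit, but density of $A_1$ in $\vec{A}_{\xi,q,G}^J$ is not automatic and is essentially the content of the step you are missing.
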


In the proof of this theorem we shall make use of the following auxiliary
result. Let $\eta>0$ and let $H_{\eta,\infty}$ be the Banach lattice of
measurable functions on $[0,1]$ such that%
\[
{\Vert f\Vert}_{H_{\eta,\infty}}:=\sup_{0<s\leq1}(s^{-\eta}|f(s)|)<\infty.
\]

\begin{lemma}
\label{lemmaH} Suppose that $G$ is a parameter for the $J$-method on $[0,1]$
such that the operator $Rf(t)=f(\sqrt{t})$ is bounded on $G$. Then, for each
$\eta>0$ the embedding $H_{\eta,\infty}\subset G$ holds.
\end{lemma}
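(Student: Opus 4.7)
The plan is to reduce the statement to the single pointwise bound $|f(s)|\le \|f\|_{H_{\eta,\infty}}s^\eta$, which is the defining inequality of $H_{\eta,\infty}$. Since $G$ is a Banach lattice, it then suffices to show that the single function $\varphi_\eta(s):=s^\eta$ belongs to $G$ with an explicit norm bound depending only on $\eta$, $\|R\|_{G\to G}$, and the norm of the embedding $L^1(1/s)\subset G$. Once this is done, the lattice property gives $\|f\|_G\le \|\varphi_\eta\|_G\,\|f\|_{H_{\eta,\infty}}$, which is exactly the claimed embedding.

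To show $\varphi_\eta\in G$, I would first handle the easy range $\eta>1$. In this case
\[
\|\varphi_\eta\|_{L^1(1/s)}=\int_0^1 s^{\eta-1}\,\frac{ds}{s}=\int_0^1 s^{\eta-2}\,ds=\frac{1}{\eta-1},
\]
so $\varphi_\eta\in L^1(1/s)\subset G$ and we get a norm estimate with explicit dependence on $\eta$.

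For the remaining range $0<\eta\le 1$, I would exploit the hypothesis that $R f(t)=f(\sqrt t)$ is bounded on $G$. Observe that $R\varphi_\alpha=\varphi_{\alpha/2}$, and hence $R^k\varphi_\alpha=\varphi_{\alpha/2^k}$ for every integer $k\ge 1$. Given $\eta>0$, choose $k=k(\eta)$ with $2^k\eta>1$; by the previous step $\varphi_{2^k\eta}\in G$, so iterating $R$ exactly $k$ times we obtain
\[
\|\varphi_\eta\|_G=\|R^k\varphi_{2^k\eta}\|_G\le \|R\|_{G\to G}^{\,k}\,\|\varphi_{2^k\eta}\|_G,
\]
which is finite. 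This completes the proof of $\varphi_\eta\in G$, and hence of the lemma.

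There is no serious obstacle: the entire argument is a two-step lattice comparison, with the main content being the (routine) check that $s^\eta\in L^1(1/s)$ for $\eta>1$ and the bootstrap using iterates of $R$ to reach all $\eta>0$. The only point worth flagging is that the normalizing measure on $(0,1]$ is $ds/s$, so the $L^1(1/s)$ norm is $\int_0^1 |f(s)|s^{-1}\,ds/s$; this is what makes the threshold $\eta>1$ (rather than $\eta>0$) appear, and it is precisely this gap that forces one to use the $R$-iteration step instead of a direct inclusion.
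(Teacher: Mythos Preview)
Your proof is correct and follows essentially the same approach as the paper. Both arguments reduce to the observation that $s^d\in L^1(1/s)\subset G$ for $d>1$, and then bootstrap to all $\eta>0$ via iterates of $R$ (using $R^k\varphi_{2^k\eta}=\varphi_\eta$). Your presentation is arguably a bit cleaner in that you first reduce, via the lattice property, to the single test function $\varphi_\eta(s)=s^\eta$, whereas the paper carries a general $f\in H_{\eta,\infty}$ through the argument by substituting $s\mapsto s^{2^n}$; but the underlying mechanism is identical.
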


\begin{proof}
First, observe that for each $d>1$ we have $L_{\infty}(t^{-d})\subset
L_{1}(1/t)(\frac{dt}{t})$.

Let $\eta>0$, choose a positive integer $n$ such that $2^{n}>\eta^{-1}$, and
set $d:=\eta2^{n}$. Then, $d>1$ and
\[
L_{\infty}(t^{-d})\subset L_{1}(1/t)(\frac{dt}{t})\subset G.
\]
Consequently, there exists $C>0$ such that
\[
\Vert f(s^{d/\eta})\Vert_{G}\leq C\sup_{0<s\leq1}\left(  s^{-d}|f(s^{d/\eta
})|\right)  =C\sup_{0<s\leq1}\left(  s^{-\eta}|f(s)|\right)  =C\Vert
f\Vert_{H_{\eta,\infty}}.
\]
On the other hand, by iteration and the definition of $d,$
\[
R^{n}\left(  f(s^{d/\eta})\right)  =f\left(  s^{d/(\eta2^{n})}\right)  =f(s).
\]
Therefore, since $R$ is bounded on $G,$ we obtain
\[
\Vert f\Vert_{G}=\left\Vert R^{n}\left(  f(s^{d/\eta})\right)  \right\Vert
_{G}\leq\Vert R\Vert^{n}\Vert f(s^{d/\eta})\Vert_{G}\leq C\Vert R\Vert
^{n}\Vert f\Vert_{H_{\eta,\infty}},
\]
as we wished to show.
\end{proof}

\begin{proof}
[Proof of Theorem~\ref{StrExtrAbst(J)}]

As a first step we shall prove that
\begin{equation}
\langle\vec{A}\rangle_{G}^{J}\subset\vec{A}_{\xi,q,G}^{J}. \label{1 emb}%
\end{equation}

Let $a\in\langle\vec{A}\rangle_{G}^{J}$. Pick a strongly measurable function
$u(t)$ supported on $(0,1],$ with values in the space $A_{1},$ satisfying
\eqref{represetation1}, and such that
\begin{equation}
\Vert J(s,u(s);\vec{A})\Vert_{G}\leq2\Vert a\Vert_{\langle\vec{A}\rangle
_{G}^{J}}\text{.} \label{represetation2}%
\end{equation}
Recall that $\vec{A}\mapsto\vec{A}_{\theta,q}^{J\blacktriangleleft}$ is an
exact interpolation functor with characteristic function $t^{\theta}$, and,
moreover, $0<\xi(t)\leq1/2$ for all $0<t\leq1$. Therefore, applying
successively \eqref{equiv}, \eqref{exact functor}, \cite[Theorem~3.2.2]{BL}
and the inequality $t^{-\xi(t)}\leq\sqrt{e}$ (see \eqref{aux1}), we obtain
that, for each $q\in\lbrack1,\infty]$ and $0<t\leq1$,%
\begin{align*}
\Vert u(t)\Vert_{\langle\vec{A}\rangle_{\xi(t),q}^{J\blacktriangleleft}}  &
\leq C\Vert u(t)\Vert_{\vec{A}_{\xi(t),q}^{{J\blacktriangleleft}}}\leq C\Vert
u(t)\Vert_{\vec{A}_{\xi(t),1}^{J}}\\
&  \leq C_{1}t^{-\xi(t)}J(t,u(t);\vec{A})\leq C_{1}\sqrt{e}J(t,u(t);\vec
{A}),\text{{}}%
\end{align*}
where $C_{1}>0$ does not depend on $t$ and $q$. Combining with
\eqref{represetation2}, yields
\[
\Vert a\Vert_{\vec{A}_{\xi,q,G}^{J}}\leq\left\Vert \Vert u(t)\Vert
_{\langle\vec{A}\rangle_{\xi(t),q}^{J\blacktriangleleft}}\right\Vert _{G}\leq
C_{1}\sqrt{e}\Vert J(t,u(t);\vec{A})\Vert_{G}\leq2C_{1}\sqrt{e}\Vert
a\Vert_{\langle\vec{A}\rangle_{G}^{J}}\text{,}%
\]
concluding the proof of \eqref{1 emb}.

Next, we prove that $\langle\vec{A}\rangle_{G}^{J}$ is dense in $\vec{A}%
_{\xi,q,G}^{J}$. Let $a\in\vec{A}_{\xi,q,G}^{J}$. Then we can find a
representation \eqref{represetation1}, where $u(t):(0,1]\rightarrow A_{1}$ is
a strongly measurable function such that $\Vert u(t)\Vert_{\langle\vec
{A}\rangle_{\xi(t),q}^{J\blacktriangleleft}}\in{G}$, and
\begin{equation}
\Vert a\Vert_{\vec{A}_{\xi,q,G}^{J}}>\frac{1}{2}\left\Vert \Vert
u(t)\Vert_{\langle\vec{A}\rangle_{\xi(t),q}^{J\blacktriangleleft}}\right\Vert
_{G}. \label{repNew1}%
\end{equation}
We shall show in a moment that for each $\delta>0$
\begin{equation}
a_{\delta}:=\int_{\delta}^{1}u(t)\,\frac{dt}{t}\in\langle\vec{A}\rangle
_{G}^{J}. \label{repNew2}%
\end{equation}
This given, writing
\[
a-a_{\delta}=\int_{0}^{1}u(t)\cdot\chi_{(0,\delta)}(t)\,\frac{dt}{t},
\]
and using the fact that the lattice $G$ is separable, we see that for every
$\varepsilon>0$ we can find $\delta>0$ such that
\[
{\Vert a-a_{\delta}\Vert}_{\vec{A}_{\xi,q,G}^{J}}\leq\left\Vert \Vert
u(t)\Vert_{\langle\vec{A}\rangle_{\xi(t),q}^{J\blacktriangleleft}}\cdot
\chi_{(0,\delta)}(t)\right\Vert _{G}<\varepsilon.
\]
We now return to the proof of \eqref{repNew2}. First of all, observe that if
$\delta\leq t\leq1$, then $1/2\geq\xi(t)\geq\eta:=(2\log(e/\delta))^{-1}>0$.
Therefore, for each $a_{1}\in A_{1}$ and $t\in\lbrack\delta,1]$ we have
\[
\Vert a_{1}\Vert_{\langle\vec{A}\rangle_{\xi(t),q}^{J\blacktriangleleft}}%
\geq(q^{\prime}\xi(t)(1-\xi(t)))^{-1/q^{\prime}}\Vert a_{1}\Vert_{\langle
\vec{A}\rangle_{\eta,q}^{J}}\geq e^{-1/e}\Vert a_{1}\Vert_{\langle\vec
{A}\rangle_{\eta,q}^{J}}.
\]
Moreover, since $A_{1}\subset A_{0}$, then by Proposition \ref{Aprop2} and
\cite[Theorem 3.4.1 (b)]{BL} it follows that%

\[
\langle\vec{A}\rangle_{\eta,q}^{J}=\vec{A}_{\eta,q}^{J}\subset\vec{A}%
_{\eta,\infty}^{J}=\langle\vec{A}\rangle_{\eta,\infty}^{J},
\]
with constants that depend only on $\delta$ and $q$.
Hence,
\[
\Vert a_{1}\Vert_{\langle\vec{A}\rangle_{\xi(t),q}^{J\blacktriangleleft}}\geq
c_{\delta,q}\Vert a_{1}\Vert_{\langle\vec{A}\rangle_{\eta,\infty}^{J}}.
\]
Combining this with \eqref{repNew1}, yields
\[
\left\Vert \Vert u(t)\Vert_{\langle\vec{A}\rangle_{\eta,\infty}^{J}}\cdot
\chi_{(\delta,1)}(t)\right\Vert _{G}<2c_{\delta,q}^{-1}\Vert a\Vert_{\vec
{A}_{\xi,q,G}^{J}}.
\]
It follows that, for every $\delta\leq t\leq1$ there exists a strongly
measurable function $v(\cdot,t):(0,1]\rightarrow A_{1}$ such that
\[
u(t)=\int_{0}^{1}v(s,t)\,\frac{ds}{s},
\]
and
\begin{equation}
\left\Vert \sup_{0<s\leq1}(s^{-\eta}J(s,v(s,t;\vec{A})))\cdot\chi_{(\delta
,1)}(t)\right\Vert _{G}<2c_{\delta,q}^{-1}\Vert a\Vert_{\vec{A}_{\xi,q,G}^{J}%
}. \label{repNew3}%
\end{equation}
We shall now verify that the integral
\[
\int_{0}^{1}\int_{\delta}^{1}v(s,t)\,\frac{dt}{t}\,\frac{ds}{s}%
\]
is absolutely convergent in $A_{0}$. Indeed, by \eqref{repNew3} and the fact
that $G\subset L_{1}[0,1](\frac{dt}{t})$, it follows that
\begin{align*}
\int_{0}^{1}\int_{\delta}^{1}{\Vert v(s,t)\Vert}_{A_{0}}\,\frac{dt}{t}%
\,\frac{ds}{s}  &  \leq\int_{0}^{1}s^{\eta}\int_{\delta}^{1}\sup_{0<s\leq
1}(s^{-\eta}J(s,v(s,t;\vec{A})))\,\frac{dt}{t}\,\frac{ds}{s}\\
&  \leq C{\left\Vert \sup_{0<s\leq1}(s^{-\eta}J(s,v(s,t;\vec{A})))\cdot
\chi_{(\delta,1)}(t)\right\Vert }_{G}\int_{0}^{1}\frac{ds}{s^{1-\eta}}\\
&  \leq2C(\eta c_{\delta,q})^{-1}\Vert a\Vert_{\vec{A}_{\xi,q,G}^{J}}<\infty.
\end{align*}
Therefore
\[
a_{\delta}=\int_{\delta}^{1}\int_{0}^{1}v(s,t)\,\frac{ds}{s}\,\frac{dt}%
{t}=\int_{0}^{1}\int_{\delta}^{1}v(s,t)\,\frac{ds}{s}\,\frac{dt}{t},
\]
and we can write
\begin{equation}
a_{\delta}=\int_{0}^{1}w(s)\,\frac{ds}{s},\quad\mbox{where}\;\;w(s):=\int%
_{\delta}^{1}v(s,t)\,\frac{dt}{t}. \label{repNew4}%
\end{equation}
It is easy to see that $w(s):(0,1]\rightarrow A_{1}$. Indeed, using
successively Minkowski's inequality and the embedding $G\subset L_{1}%
[0,1](\frac{dt}{t}),$ for each $s\in(0,1]$ we have
\begin{align*}
{\Vert w(s)\Vert}_{A_{1}}  &  \leq\int_{\delta}^{1}{\Vert v(s,t)\Vert}_{A_{1}%
}\,\frac{dt}{t}\leq\frac{1}{s}\int_{\delta}^{1}J(s,v(s,t);\vec{A})\,\frac
{dt}{t}\\
&  \leq\frac{C}{s}{\left\Vert \sup_{0<s\leq1}(s^{-\eta}J(s,v(s,t;\vec
{A})))\cdot\chi_{(\delta,1)}(t)\right\Vert }_{G}\\
&  \leq2C(sc_{\delta,q})^{-1}\Vert a\Vert_{\vec{A}_{\xi,q,G}^{J}}<\infty.
\end{align*}

Furthermore, by Minkowski's inequality, Lemma \ref{lemmaH}, and the embedding
$G\subset L_{1}[0,1](\frac{dt}{t}),$ we obtain
\begin{align*}
{\Vert J(s,w(s);\vec{A})\Vert}_{G}  &  \leq\int_{\delta}^{1}{\Vert
J(s,v(s,t);\vec{A})\Vert}_{G}\,\frac{dt}{t}\\
&  \leq C^{\prime}\int_{\delta}^{1}\sup_{0<s\leq1}(s^{-\eta}J(s,v(s,t);\vec
{A})\,\frac{dt}{t}\\
&  \leq CC^{\prime}{\left\Vert \sup_{0<s\leq1}(s^{-\eta}J(s,v(s,t);\vec
{A})\cdot\chi_{(\delta,1)}(t)\right\Vert }_{G}\\
&  \leq2CC^{\prime}c_{\delta,q}^{-1}\Vert a\Vert_{\vec{A}_{\xi,q,G}^{J}%
}<\infty.
\end{align*}
Thus, taking into account \eqref{repNew4}, we see that $a_{\delta}\in
\langle\vec{A}\rangle_{G}^{J}$, and \eqref{repNew2} is proved. This concludes
the proof that $\langle\vec{A}\rangle_{G}^{J}$ is dense in $\vec{A}_{\xi
,q,G}^{J}$, and therefore from \eqref{1 emb} it follows that
\begin{equation}
(\vec{A}_{\xi,q,G}^{J})^{\ast}\subset(\langle\vec{A}\rangle_{G}^{J})^{\ast}.
\label{repNewNew1}%
\end{equation}
Now, our next aim will be to prove the converse embedding.

On account of the fact that $A_{1}$ is dense in $A_{0}$, it follows that
$(A_{1}^{\ast},A_{0}^{\ast})$ is an ordered pair. We shall show that
\begin{equation}
\langle A_{1}^{\ast},A_{0}^{\ast}\rangle_{G^{\prime}}^{K}\subset(\vec{A}%
_{\xi,q,G}^{J})^{\ast}. \label{2 emb}%
\end{equation}

Let $0<\theta<1$ and $1\leq q\leq\infty,$ be arbitrary (but fixed). Let
$a\in\langle\vec{A}\rangle_{\theta,q}^{J}$. Then we can find a strongly
measurable function $u(t):(0,1]\rightarrow$ $A_{1}$ such that $a=\int_{0}%
^{1}u(s)\frac{ds}{s},$ and, moreover,
\begin{equation}
\Big(\int_{0}^{1}\left(  s^{-\theta}J(s,u(s);\vec{A})\right)  ^{q}\,\frac
{ds}{s}\Big)^{1/q}\leq2\Vert a\Vert_{\langle\vec{A}\rangle_{\theta,q}^{J}%
}\text{.} \label{represetation3}%
\end{equation}
Let $b\in A_{0}^{\ast},$ then, by duality and H\"{o}lder's inequality,%
\begin{align*}
\left\vert \langle b,a\rangle\right\vert  &  =\left\vert \langle b,\int%
_{0}^{1}u(s)\frac{ds}{s}\rangle\right\vert \leq\int_{0}^{1}\left\vert \langle
b,u(s)\rangle\right\vert \frac{ds}{s}\\
&  \leq\int_{0}^{1}K(1/s,b;A_{0}^{\ast},A_{1}^{\ast})\cdot J(s,u(s);\vec
{A})\,\frac{ds}{s}\\
&  \leq\Big(\int_{0}^{1}\left(  s^{\theta}K(1/s,b;A_{0}^{\ast},A_{1}^{\ast
})\right)  ^{q^{\prime}}\,\frac{ds}{s}\Big)^{1/q^{\prime}}\Big(\int_{0}%
^{1}\left(  s^{-\theta}J(s,u(s);\vec{A})\right)  ^{q}\,\frac{ds}{s}%
\Big)^{1/q}\\
&  \leq\Big(\int_{0}^{1}\left(  s^{\theta-1}K(s,b;A_{1}^{\ast},A_{0}^{\ast
})\right)  ^{q^{\prime}}\,\frac{ds}{s}\Big)^{1/q^{\prime}}\Big(\int_{0}%
^{1}\left(  s^{-\theta}J(s,u(s);\vec{A})\right)  ^{q}\,\frac{ds}{s}%
\Big)^{1/q}.
\end{align*}
Combining with \eqref{represetation3} yields
\begin{equation}
\left\vert \langle b,a\rangle\right\vert \leq2\Vert b\Vert_{\langle
A_{1}^{\ast},A_{0}^{\ast}\rangle_{1-\theta,q^{\prime}}^{K\blacktriangleleft}%
}\Vert a\Vert_{\langle\vec{A}\rangle_{\theta,q}^{J\blacktriangleleft}},
\label{EQ1}%
\end{equation}
for $0<\theta<1$, $1\leq q\leq\infty$, and for all $a\in\vec{A}_{\theta,q}%
^{J}$ and $b\in A_{0}^{\ast}$.

Now, let $a\in\vec{A}_{\xi,q,G}^{J}$, pick a strongly measurable function
$u(t):(0,1]\rightarrow$ $A_{1},$ such that \eqref{represetation1} holds, and,
moreover,
\begin{equation}
\left\Vert \Vert u(t)\Vert_{\langle\vec{A}\rangle_{\xi(t),q}%
^{J\blacktriangleleft}}\right\Vert _{G}\leq2\Vert a\Vert_{\vec{A}_{\xi
,q,G}^{J}}. \label{EQ2}%
\end{equation}
Observe that $1-\xi(t)=\theta(t),$ $0<t\leq1$. Therefore, by \eqref{EQ1} and
\eqref{EQ2}, we see that for each $b\in A_{0}^{\ast}$
\begin{align}
\left\vert \langle b,a\rangle\right\vert  &  \leq\int_{0}^{1}|\langle
b,u(t)\rangle|\,\frac{dt}{t}\leq2\int_{0}^{1}\Vert b\Vert_{\langle A_{1}%
^{\ast},A_{0}^{\ast}\rangle_{\theta(t),q^{\prime}}^{K\blacktriangleleft}}\Vert
u(t)\Vert_{\langle\vec{A}\rangle_{\xi(t),q}^{J\blacktriangleleft}}\,\frac
{dt}{t}\nonumber\\
&  =2\int_{0}^{1}t\Vert b\Vert_{\langle A_{1}^{\ast},A_{0}^{\ast}%
\rangle_{\theta(t),q^{\prime}}^{K\blacktriangleleft}}\Vert u(t)\Vert
_{\langle\vec{A}\rangle_{\xi(t),q}^{J\blacktriangleleft}}\,\frac{dt}{t^{2}%
}\nonumber\\
&  \leq2\left\Vert t\cdot\Vert b\Vert_{\langle A_{1}^{\ast},A_{0}^{\ast
}\rangle_{\theta(t),q^{\prime}}^{K\blacktriangleleft}}\right\Vert _{G^{\prime
}}\cdot\left\Vert \Vert u(t)\Vert_{\langle\vec{A}\rangle_{\xi(t),q}%
^{J\blacktriangleleft}}\right\Vert _{G}\nonumber\\
&  \leq4\left\Vert t\cdot\Vert b\Vert_{\langle A_{1}^{\ast},A_{0}^{\ast
}\rangle_{\theta(t),q^{\prime}}^{K\blacktriangleleft}}\right\Vert _{G^{\prime
}}\cdot\Vert a\Vert_{\vec{A}_{\xi,q,G}^{J}}. \label{nec}%
\end{align}
One can easily verify that the operator $Rf(t)=f(\sqrt{t})$ is bounded on $G$
if and only if the operator $Tf=f(t^{2})/t$ is bounded on $G^{\prime}$. Thus,
from Theorem~\ref{StrExtrAbst} applied to the ordered pair $(A_{1}^{\ast
},A_{0}^{\ast})$ we obtain,%
\[
\left\Vert t\cdot\Vert b\Vert_{\langle A_{1}^{\ast},A_{0}^{\ast}%
\rangle_{\theta(t),q^{\prime}}^{K\blacktriangleleft}}\right\Vert _{G^{\prime}%
}\leq C\Vert b\Vert_{\langle A_{1}^{\ast},A_{0}^{\ast}\rangle_{G^{\prime}}%
^{K}}.
\]
The preceding inequality combined with (\ref{nec}) yields that for all $b\in
A_{0}^{\ast}$, $a\in\vec{A}_{\xi,q,G}^{J},$
\[
\left\vert \langle b,a\rangle\right\vert \leq C\Vert b\Vert_{\langle
A_{1}^{\ast},A_{0}^{\ast}\rangle_{G^{\prime}}^{K}}\Vert a\Vert_{\vec{A}%
_{\xi,q,G}^{J}}.
\]
This estimate extends to all $b\in\langle A_{1}^{\ast},A_{0}^{\ast}%
\rangle_{G^{\prime}}^{K}$ on account of the fact that, by assumption,
$A_{0}^{\ast}$ is dense in $\langle A_{1}^{\ast},A_{0}^{\ast}\rangle
_{G^{\prime}}^{K}.$ Therefore, the proof of \eqref{2 emb} is complete.

Furthermore, since $G$ is separable it follows that $A_{1}$ is dense in
$\langle\vec{A}\rangle_{G}^{J}$ and consequently $(\langle\vec{A}\rangle
_{G}^{J})^{\ast}=\langle A_{1}^{\ast},A_{0}^{\ast}\rangle_{G^{\prime}}^{K}$
(cf. \cite{BK}). Thus, from embeddings \eqref{repNewNew1} and \eqref{2 emb} it
follows that $(\langle\vec{A}\rangle_{G}^{J})^{\ast}=(\vec{A}_{\xi,q,G}%
^{J})^{\ast}$. Since $\langle\vec{A}\rangle_{G}^{J}$ is embedded into $\vec
{A}_{\xi,q,G}^{J}$ as a dense subset, we can invoke the Hahn-Banach Theorem to
conclude that, as we wished to show, $\langle\vec{A}\rangle_{G}^{J}=\vec
{A}_{\xi,q,G}^{J}$.
\end{proof}

\begin{remark}
\label{K/J formulas}Let $1\leq q<\infty$. As it is shown in \cite[Theorem~7.6]%
{CF-CKU} (generalizing formula \eqref{equivJ}), for every ordered pair
$\vec{A}$ the modified $K$-interpolation space $\langle\vec{A}\rangle
_{0,q}^{K}$ (see Definition \ref{def:order}) can be obtained also by using the
$J$-method. Namely, the latter space coincides (with equivalence of norms)
with the space $\langle\vec{A}\rangle_{0,q,\log}^{J}$, generated by the space
$L_{\log}^{q}$ on $((0,1),ds/s)$, normed by%
\[
\left\Vert f\right\Vert _{L_{\log}^{q}}=\Big(\int_{0}^{1}(|f(s)|\log
(e/s))^{q}\,\frac{ds}{s}\Big)^{1/q}.
\]
It is easy to see that the operator $Rf(t)=f(\sqrt{t})$ is bounded on the
lattice $L_{\log}^{q}$. Therefore, applying Theorem \ref{StrExtrAbst(J)} to
$G=L_{\log}^{q}$ with $1<q<\infty$, we get an extrapolation description of the
spaces $\langle\vec{A}\rangle_{0,q}^{K}$.
\end{remark}

\section{Reiteration properties of limiting interpolation
spaces\label{sec:reiteration}}

Throughout this section ${L}^{p}:={L}^{p}[0,1]$, $1\leq p\leq\infty$.

\begin{theorem}
\label{TH2} Let $F$ be an interpolation Banach lattice between ${L}^{\infty}$
and ${L}^{\infty}(1/t)$ on $[0,1]$. The following conditions are equivalent:

(i) the operator $Tf(t)=f(t^{2})/t$ is bounded on $F$;

(ii) for all ordered pairs $\vec{A}=(A_{0},A_{1})$, and for all $0<\theta<1$
and $1\leq q\leq\infty$ we have
\[
{\langle}\vec{A}_{\theta,q},A_{1}{\rangle}_{F}^{K}={\langle\vec{A}\rangle
_{F}^{K}}%
\]
(with constants depending on $\theta$ and $q$);

(iii) for every ordered pair $\vec{A}=(A_{0},A_{1})$ there exist $\theta
_{1},\theta_{2}\in(0,1)$, $\theta_{1}\neq\theta_{2}$, and $1\leq q_{1}%
,q_{2}\leq\infty$ such that
\[
{\langle}\vec{A}_{\theta_{1},q_{1}},A_{1}{\rangle}_{F}^{K}={\langle}\vec
{A}_{\theta_{2},q_{2}},A_{1}{\rangle}_{F}^{K};
\]

(iv) there exist $\theta_{1},\theta_{2}\in(0,1)$, $\theta_{1}\neq\theta_{2}$,
and $1\leq q_{1},q_{2}\leq\infty$ such that
\[
{\langle}(L^{1},{L}^{\infty})_{\theta_{1},q_{1}},{L}^{\infty}{\rangle}_{F}%
^{K}=\langle(L^{1},{L}^{\infty})_{\theta_{2},q_{2}},{L}^{\infty}\rangle
_{F}^{K}\text{.}%
\]

\end{theorem}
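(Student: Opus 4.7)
I will prove the cycle (i)$\Rightarrow$(ii)$\Rightarrow$(iii)$\Rightarrow$(iv)$\Rightarrow$(i). The implications (ii)$\Rightarrow$(iii) and (iii)$\Rightarrow$(iv) are immediate: the first by picking two distinct admissible parameter pairs, the second by specializing (iii) to the ordered pair $\vec{A}=(L^1,L^\infty)$. The substantive work is in (i)$\Rightarrow$(ii) and in closing the loop via (iv)$\Rightarrow$(i).

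\textbf{For (i)$\Rightarrow$(ii)}, since $\vec{A}=(A_0,A_1)$ is ordered, so is $(\vec{A}_{\theta,q},A_1)$. Applying Theorem~\ref{StrExtrAbst} (and Remark~\ref{Zam3} so as to work with the unbracketed Lions--Peetre scale) to each of these ordered pairs with $r=\infty$, we obtain
\[
\|a\|_{\langle\vec{A}\rangle_F^K}\asymp\bigl\|\,t\cdot\|a\|_{\vec{A}_{\theta(t),\infty}^{K\blacktriangleleft}}\bigr\|_F,\quad \|a\|_{\langle\vec{A}_{\theta,q},A_1\rangle_F^K}\asymp\bigl\|\,t\cdot\|a\|_{(\vec{A}_{\theta,q},A_1)_{\theta(t),\infty}^{K\blacktriangleleft}}\bigr\|_F.
\]
Holmstedt's classical reiteration identifies $(\vec{A}_{\theta,q},A_1)_{\eta,\infty}^K=\vec{A}_{\theta+(1-\theta)\eta,\infty}^K$ with constants uniform in $\eta\in[1/2,1)$; substituting $\eta=\theta(t)$ the scale index becomes $\tilde\theta(t)=1+\tfrac{1-\theta}{2\log(t/e)}$. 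Writing $\tilde\theta(t)=\theta(\phi(t))$ with $\phi(t):=e(t/e)^{1/(1-\theta)}$, the task reduces to showing that the operator $M_\alpha f(t):=f(t^\alpha)/t^{\alpha-1}$, with $\alpha:=1/(1-\theta)>1$, is bounded on $F$. Iterating the hypothesis (i) gives $T^n=M_{2^n}$ bounded on $F$; for arbitrary $\alpha>1$ pick $n$ with $2^n\geq\alpha$, write $\alpha=2^n\beta$ with $\beta\in(0,1]$, and factor $M_\alpha=M_{2^n}\circ M_\beta$. A direct check shows $M_\beta$ has norm at most $1$ on both $L^\infty[0,1]$ and $L^\infty(1/t)[0,1]$, so by the interpolation hypothesis on $F$ the operator $M_\beta$ is bounded on $F$, giving (ii).

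\textbf{For (iv)$\Rightarrow$(i)}, the plan is to verify condition (d) of Theorem~\ref{ExtrMinLat} for the pair $\vec{A}=(L^1,L^\infty)$, which is $Conv_0$-abundant on $[0,1]$ by Example~\ref{exampleabundant}; then (d)$\Rightarrow$(a) of Theorem~\ref{ExtrMinLat} is precisely (i). Setting $\vec{B}_i:=(L^1,L^\infty)_{\theta_i,q_i}$ and applying the classical Holmstedt formula to the ordered pair $(\vec{B}_i,L^\infty)$ yields, for $0<s\leq 1$,
\[
K(s,f;\vec{B}_i,L^\infty)\asymp \Big(\int_0^{s^{1/(1-\theta_i)}}\bigl(u^{-\theta_i}K(u,f;L^1,L^\infty)\bigr)^{q_i}\,\frac{du}{u}\Big)^{1/q_i}.
\]
Hence (iv) is the equivalence, in $F$-norm, of two Hardy-type transforms of the $K$-profile $g(u):=K(u,f;L^1,L^\infty)$. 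By $Conv_0$-abundance, $g$ may be taken to be any concave increasing function on $[0,1]$ vanishing at $0$. Testing against a suitable two-parameter family of profiles (for instance, truncated powers $u\mapsto\min(u/s_0,1)^\gamma$) and combining the resulting identities with the sandwich $L^\infty(1/t)\subset F\subset L^\infty$, one extracts the extrapolation equivalence \eqref{BasEq} for $\vec{A}=(L^1,L^\infty)$ and some $q$, which is exactly condition (d); the conclusion follows from Theorem~\ref{ExtrMinLat}.

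\textbf{Main obstacle.} The delicate step is the last one in (iv)$\Rightarrow$(i): passing from equivalence of two Hardy-type transforms on the full class of concave $K$-profiles to the specific extrapolation equivalence \eqref{BasEq}. The $Conv_0$-abundance of $(L^1,L^\infty)$ is crucial because it permits us to range over \emph{all} admissible profiles, while the interpolation hypothesis $L^\infty(1/t)\subset F\subset L^\infty$ pins the dilation invariance obtained to $t\mapsto t^2$ (rather than to an unrelated dilation that could arise from the ratio $(1-\theta_1)/(1-\theta_2)$); getting the constants to converge on the specific functional form of \eqref{BasEq} is where one must argue most carefully.
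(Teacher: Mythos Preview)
Your route to (i)$\Rightarrow$(ii) is correct and genuinely different from the paper's. The paper works directly with Holmstedt's formula for $K(t,a;\vec{A}_{\theta,1}^K,A_1)$, decomposing the integral over dyadic shells $[t^{2^{n+1}},t^{2^n}]$ and bounding each shell by $T^{n+1}(K(\cdot,a;\vec{A}))$ times a geometrically decaying coefficient; summing in $F$-norm uses only $\|T\|_{F\to F}<\infty$. You instead invoke Theorem~\ref{StrExtrAbst} on both $\vec{A}$ and $(\vec{A}_{\theta,q},A_1)$, then use reiteration (whose constants are indeed uniform in the outer parameter $\eta\in[1/2,1)$, since the Holmstedt constants depend only on the fixed inner $\theta,q$) to reduce to the boundedness on $F$ of $M_\alpha f(t)=f(t^\alpha)/t^{\alpha-1}$. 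Your factorization $M_\alpha=T^n\circ M_\beta$ with $\beta\leq1$ and $M_\beta$ a contraction on both $L^\infty$ and $L^\infty(1/t)$ is clean. This is more conceptual but also more indirect: it leans on the full extrapolation machinery rather than a single Holmstedt estimate.

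Your (iv)$\Rightarrow$(i), however, has a genuine gap. You propose to verify condition (d) of Theorem~\ref{ExtrMinLat} by ``testing against a suitable two-parameter family of profiles'' and thereby ``extract[ing]'' the equivalence \eqref{BasEq}; but this is exactly the substantive step, and nothing in your sketch explains how two specific Hardy-type transforms being $F$-equivalent on concave profiles yields the pointwise-in-$t$ extrapolation identity. The paper takes a completely different, constructive route. From (iv) one first gets $\langle L^p,L^\infty\rangle_F^K=\langle L^q,L^\infty\rangle_F^K$ for some concrete $1<q<p$; a direct H\"older computation shows $S_rf(t):=f(t^r)$ maps $L^p\to L^q$ for $1<r<p/q$, hence by interpolation $S_r$ is bounded on $X:=\langle L^p,L^\infty\rangle_F^K$, and by iteration for all $r>1$. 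Using the boundedness of $S_p$ together with an elementary $K$-functional estimate one shows $X=\langle L^1,L^\infty\rangle_F^K$; then the boundedness of $S_2$ on $X$ translates, via $K(t,f;L^1,L^\infty)=\int_0^t f^*$, into $\|t^{-1}\int_0^{t^2}f^*\|_F\leq C\|\int_0^t f^*\|_F$, which is precisely $\|Tg\|_F\leq C\|g\|_F$ on the cone of $K$-profiles. Only at the very end does $Conv_0$-abundance enter, to pass from $K$-profiles to arbitrary concave functions and then (via the interpolation hypothesis on $F$ and the map $f\mapsto\tilde f$ of \eqref{maping}) to all of $F$. The idea you are missing is the introduction of the operators $S_r$: they convert the single equality in (iv) into a full family of dilation symmetries of $X$, from which the boundedness of $T$ is read off directly rather than via \eqref{BasEq}.
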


\begin{proof}
$(i)\Rightarrow(ii)$. Since $A_{1}\subset A_{0}$, it follows that $\vec
{A}_{\theta,q}\subset A_{0}$ for all $0<\theta<1,1\leq q\leq\infty.$
Consequently,
\[
{\langle}\vec{A}_{\theta,q},A_{1}{\rangle}_{F}^{K}\subset{\langle\vec
{A}\rangle_{F}^{K}.}%
\]
Since $\vec{A}_{\theta,1}\subset$ $\vec{A}_{\theta,q},1\leq q\leq\infty,$ to
prove the converse embedding it will suffice to show that ${\langle\vec
{A}\rangle_{F}^{K}}\subset{\langle}\vec{A}_{\theta,1},A_{1}{\rangle}_{F}^{K}.$

Given $\theta\in(0,1)$ pick $m\in\mathbb{N}$ such that $2^{-m}\leq
1-\theta<2^{1-m}\text{.}$ Then, for all $0<t\leq1$ we have
\[
t^{2^{m}}\leq t^{1/(1-\theta)}<t^{2^{m-1}}\text{.}%
\]
Using Holmstedt's formula (see \cite{Holm} or \cite[Corollary~3.6.2(b)]{BL})
we obtain
\begin{align}
K(t,a;\vec{A}_{\theta,1}^{K},A_{1})  &  \asymp\int_{0}^{t^{1/(1-\theta)}%
}s^{-\theta}K(s,a;\vec{A})\,\frac{ds}{s}\nonumber\\
&  =\int_{t^{2^{m}}}^{t^{1/(1-\theta)}}s^{-\theta}K(s,a;\vec{A})\,\frac{ds}%
{s}\nonumber\\
&  +\sum_{n=m}^{\infty}\int_{t^{2^{n+1}}}^{t^{2^{n}}}s^{-\theta}K(s,a;\vec
{A})\,\frac{ds}{s}. \label{EQ100}%
\end{align}
We estimate separately each of the terms from the right-hand side of
\eqref{EQ100}, assuming that $0<t\leq1/2$.

For the first integral we use the concavity of $K(s,a;\vec{A})$ with respect
to $s$ \cite[Lemma~3.1.1]{BL} and the definition of $T,$ to obtain
\[
\int_{t^{2^{m}}}^{t^{1/(1-\theta)}}s^{-\theta}K(s,a;\vec{A})\,\frac{ds}{s}%
\leq\frac{1}{1-\theta}\frac{K(t^{2^{m}},a;\vec{A})}{t^{2^{m}}}\cdot t=\frac
{1}{1-\theta}T^{m}\left(  K(t,a;\vec{A})\right)  \text{.}%
\]
Similarly, since $(1-\theta)2^{n}\geq1$ for all $n\in\mathbb{N}$ such that
$n\geq m$, we have
\begin{align*}
\int_{t^{2^{n+1}}}^{t^{2^{n}}}s^{-\theta}K(s,a;\vec{A})\,\frac{ds}{s}  &
\leq\frac{1}{1-\theta}\frac{K(t^{2^{n+1}},a;\vec{A})}{t^{2^{n+1}}}\cdot t\cdot
t^{(1-\theta)2^{n}-1}\\
&  \leq\frac{1}{1-\theta}T^{n+1}\left(  K(t,a;\vec{A})\right)  2^{1-(1-\theta
)2^{n}}.
\end{align*}
Inserting these estimates in \eqref{EQ100} we find that there exists a
constant $C,$ that depends only on $\theta,$ such that for all $0<t\leq1/2$,
\begin{equation}
K(t,a;\vec{A}_{\theta,1}^{K},A_{1})\leq C\left(  T^{m}\left(  K(t,a;\vec
{A})\right)  +\sum_{n=m}^{\infty}T^{n+1}\left(  K(t,a;\vec{A})\right)
2^{1-(1-\theta)2^{n}}\right)  . \label{dob}%
\end{equation}
On account of the fact that the $K-$functional is a concave function we have
that, for all $t\in(0,1),$
\[
K(t,a;\vec{A}_{\theta,1}^{K},A_{1})\leq2K(t/2,a;\vec{A}_{\theta,1}^{K}%
,A_{1}).
\]
Consequently, by (\ref{dob})
\begin{align*}
\left\Vert K(t,a;\vec{A}_{\theta,1}^{K},A_{1})\right\Vert _{F}  &
\leq2\left\Vert K(t/2,a;\vec{A}_{\theta,1}^{K},A_{1})\chi_{\lbrack
0,1]}(t)\right\Vert _{F}\\
&  \leq2C\left(  \Vert T\Vert^{m}+\sum_{n=m+1}^{\infty}\Vert T\Vert
^{n}2^{1-(1-\theta)2^{n-1}}\right)  \Vert K(t,a;\vec{A})\Vert_{F}.
\end{align*}
Thus, with constants depending on $\theta$ and $q,$ we have
\[
{\langle\vec{A}\rangle_{F}^{K}}\subset{\langle}\vec{A}_{\theta,1}^{K}%
,A_{1}{\rangle}_{F}^{K}\subset{\langle}\vec{A}_{\theta,q}^{K},A_{1}{\rangle
}_{F}^{K},
\]
as we wished to show.

Since the implications $(ii)\Rightarrow(iii)$ and $(iii)\Rightarrow(iv)$ are
immediate, it is only left to prove

$(iv)\Rightarrow(i)$. Without loss of generality we can assume that
{$\theta_{1}<\theta_{2}$. Choose $\tilde{\theta}_{1}$ and $\tilde{\theta}_{2}$
so that $\theta_{1}<\tilde{\theta}_{1}<\tilde{\theta}_{2}<\theta_{2}$. Then,
since $L^{\infty}\subset L^{1}$, by \cite[Theorem 3.4.1 (c), (d)]{BL} we
obtain
\[
(L^{1},L^{\infty})_{\theta_{2},q_{2}}\subset(L^{1},L^{\infty})_{\tilde{\theta
}_{2},1/(1-\tilde{\theta}_{2})}\subset(L^{1},L^{\infty})_{\tilde{\theta}%
_{1},1/(1-\tilde{\theta}_{1})}\subset(L^{1},L^{\infty})_{\theta_{1},q_{1}}.
\]
Therefore, setting $q:=1/(1-\tilde{\theta}_{1})$ and $p:=1/(1-\tilde{\theta
}_{2})$, we have $1<q<p<\infty$ and, by $(iv)$,
\begin{equation}
{\langle L}^{p},{L}^{\infty}{\rangle}_{{F}}^{K}={\langle L}^{q},{L}^{\infty
}{\rangle}_{{F}}^{K}\text{{}}. \label{EQ110a}%
\end{equation}
}

We now show that the operator $S_{r}f(t):=f(t^{r})$ is bounded from ${L}^{p}$
into ${L}^{q},$ if $1<r<p/q$. Indeed, by H\"{o}lder's inequality, we have%
\begin{align*}
\Vert S_{r}f\Vert_{q}^{q}  &  =\int_{0}^{1}|f(s^{r})|^{q}\;ds=\frac{1}{r}%
\int_{0}^{1}u^{1/r-1}|f(u)|^{q}\;du\\
&  \leq\frac{1}{r}\left(  \int_{0}^{1}u^{\frac{(1-r)p}{r(p-q)}}\;du\right)
^{\frac{p-q}{p}}\left(  \int_{0}^{1}|f(u)|^{p}\;du\right)  ^{q/p},
\end{align*}
where $\int_{0}^{1}u^{\frac{(1-r)p}{r(p-q)}}\,du<\infty$ on account of the
fact that $1<r<p/q$. Moreover, it is plain that for all $r>1$ the operator
$S_{r}$ is bounded on ${L}^{\infty}$. Hence, by interpolation and
\eqref{EQ110a},%
\[
S_{r}:{\langle L}^{p},{L}^{\infty}{\rangle}_{{F}}^{K}\rightarrow{\langle
L}^{q},{L}^{\infty}{\rangle}_{{F}}^{K}={\langle L}^{p},{L}^{\infty}{\rangle
}_{{F}}^{K},
\]
whenever $1<r<p/q$. Consequently, if we let $X:={\langle L}^{p},{L}^{\infty
}{\rangle}_{{F}}^{K},$ then $S_{r}$ is bounded on $X$ for $1<r<p/q$. We now
show that this implies that $S_{r}$ is bounded on $X$ for all $r>1.$ Indeed,
if $r\geq p/q>1,$ we can select $n\in\mathbb{N}$ large enough so that
$r^{1/n}\in(1,p/q)$. Then $S_{r^{1/n}}:X\rightarrow X$ and we conclude since
$S_{r}=(S_{r^{1/n}})^{n}.$

Next, we exploit the properties of the lattice $F$ to prove that the
definition of $X$ self improves to
\begin{equation}
X={\langle}L^{1},{L}^{\infty}{\rangle}_{{F}}^{K}\text{.} \label{EQ111a}%
\end{equation}
It is immediate that $X={\langle L}^{p},{L}^{\infty}{\rangle}_{{F}}^{K}%
\subset{\langle}L^{1},{L}^{\infty}{\rangle}_{{F}}^{K}.$ To prove the opposite
embedding we use an equivalent expression for $K(t,g;{L}^{p},{L}^{\infty})$
\cite[Theorem~5.2.1]{BL} as follows
\begin{align*}
K(t,f(s^{1/p});{L}^{p},{L}^{\infty})  &  \asymp\left(  \int_{0}^{t^{p}}\left[
f^{\ast}(s^{1/p})\right]  ^{p}\,ds\right)  ^{1/p}=p^{1/p}\left(  \int_{0}%
^{t}\left[  f^{\ast}(u)\right]  ^{p}u^{p-1}\,du\right)  ^{1/p}\\
&  \leq p^{1/p}\left(  \int_{0}^{t}f^{\ast}(u)\,du\right)  ^{1/p}\left(
\underset{0<u\leq t}{\sup}uf^{\ast}(u)\right)  ^{(p-1)/p}\\
&  \leq p^{1/p}\left(  \int_{0}^{t}f^{\ast}(u)\,du\right)  ^{1/p}\left(
\int_{0}^{t}f^{\ast}(u)du\right)  ^{(p-1)/p}\\
&  \leq p^{1/p}\int_{0}^{t}f^{\ast}(u)\,du\\
&  =p^{1/p}K(t,f;L^{1},{L}^{\infty})\text{.}%
\end{align*}
Consequently, since $S_{p}$ is bounded on $X$, we obtain
\begin{align*}
\Vert f\Vert_{X}  &  =\Vert S_{p}S_{1/p}(f)\Vert_{X}\leq\Vert S_{p}\Vert\Vert
K(t,f(s^{1/p});{L}^{p},{L}^{\infty})\Vert_{{F}}\\
&  \leq p^{1/p}\Vert S_{p}\Vert\Vert K(t,f;L^{1},{L}^{\infty})\Vert_{{F}%
}\text{,}%
\end{align*}
whence
\[
{\langle}L^{1},{L}^{\infty}{\rangle}_{{F}}^{K}\subset X\text{,}%
\]
and the proof of \eqref{EQ111a} is completed.

Further, from \eqref{EQ111a} and the boundedness of $S_{2}$ on $X$ we obtain
\[
\left\Vert \int_{0}^{t}S_{2}f^{\ast}(s)\,ds\right\Vert _{{F}}\leq\Vert
S_{2}f\Vert_{X}\leq\Vert S_{2}\Vert\Vert f\Vert_{X}\leq C\left\Vert \int%
_{0}^{t}f^{\ast}(s)\,ds\right\Vert _{{F}}\text{.}%
\]
On the other hand,
\[
\int_{0}^{t}S_{2}f^{\ast}(s)\,ds=\frac{1}{2}\int_{0}^{t^{2}}u^{-1/2}f^{\ast
}(u)\,du\geq\frac{1}{2t}\int_{0}^{t^{2}}f^{\ast}(u)\,du\text{,}%
\]
and therefore
\begin{equation}
\left\Vert \frac{1}{t}\int_{0}^{t^{2}}f^{\ast}(s)\,ds\right\Vert _{{F}}%
\leq2C\left\Vert \int_{0}^{t}f^{\ast}(s)\,ds\right\Vert _{{F}}\text{.}
\label{concava}%
\end{equation}

We now re-interpret (\ref{concava}) as the boundedness of the operator $T$ on
a class of concave functions$.$ Indeed, since $K(t,f;L^{1},{L}^{\infty}%
)=\int_{0}^{t}f^{\ast}(s)\,ds,$ we have
\[
\frac{1}{t}\int_{0}^{t^{2}}f^{\ast}(s)\,ds=T(K(\cdot,f;L^{1},{L}^{\infty
}))(t).
\]
Furthermore, since the pair $(L^{1},{L}^{\infty})$ is $Conv_{0}$-abundant on
$[0,1]$ (cf. Example \ref{exampleabundant} above), we can rewrite
\ (\ref{concava}) as follows: for every concave increasing function $f\in{F}$
such that $\underset{t\rightarrow0}{\lim}f(t)=0$
\begin{equation}
\Vert Tf\Vert_{{F}}\leq2C\Vert f\Vert_{{F}}. \label{EQ112a}%
\end{equation}
We will show in a moment that
(\ref{EQ112a}) holds for all concave increasing functions from $F$. Then, the
interpolation property of $F$ guarantees (by the same argument we gave in the
course of the proof of Theorem~\ref{ExtrMinLat} above) that $T$ is bounded on
all of $F$, concluding the proof. To prove the above claim we argue by
contradiction. Suppose that there exists a concave increasing function
$f_{0}\in{F}$ such that $\underset{t\rightarrow0}{\lim}f_{0}(t)>0$. Then
clearly ${F}={L}^{\infty}$. But by Proposition \ref{Aprop1} and the first
equation from (\ref{K1}), we have
\begin{align*}
\langle(L^{1},{L}^{\infty})_{\theta,q},{L}^{\infty}\rangle_{{L}^{\infty}}^{K}
&  =((L^{1},{L}^{\infty})_{\theta,q},{L}^{\infty})_{{0,\infty}}^{K}\\
&  =(L^{1},{L}^{\infty})_{\theta,q}^{K},
\end{align*}
for every $0<\theta<1$ and $1\leq q\leq\infty$. Therefore $(iv)$ fails in this
case, which gives a contradiction. So $F\neq{L}^{\infty}$ and therefore
\eqref{EQ112a} holds for each concave increasing function $f\in{F,}$ as we
wished to show.
\end{proof}

Since the pair $(L^{1},{L}^{\infty})$ is $Conv_{0}$-abundant (cf. Example
\ref{exampleabundant} above), from Theorems~\ref{ExtrMinLat} and~\ref{TH2} we obtain

\begin{corollary}
\label{cor2} Suppose $F$ is an interpolation Banach lattice of functions with
respect to the pair $({L}^{\infty},{L}^{\infty}(1/t))$ on $[0,1]$. Then, the
following conditions are equivalent:

(i) the operator $T f (t) =f (t^{2})/t$ is bounded on $F$;

(ii) for every Banach pair $\vec{A}=(A_{0},A_{1})$ and every $1\leq
q\leq\infty$ the equivalence \eqref{BasEq} holds for the space {$\langle
\vec{A}\rangle_{F}^{K}$};

(iii) for every Banach pair $\vec{A}=(A_{0},A_{1})$, we have, with constants
independent of $a\in\langle\vec{A}\rangle_{F}^{K}+\tilde{A}_{1}$, $s>0$ and
$1\leq q\leq\infty$,
\[
K(s,a;{\langle\vec{A}\rangle_{F}^{K}},\tilde{A}_{1})\asymp K(s,\Vert
a\Vert_{\langle\vec{A}\rangle_{\theta(t),q}^{K\blacktriangleleft}}%
;F,{L}^{\infty}(1/t)),
\]
where $\theta(t)=1+\frac{1}{2\log(t/e)}$;

(iv) for every ordered pair $\vec{A}=(A_{0},A_{1})$, and for all $\theta
\in(0,1)$ and $1\leq q\leq\infty$ we have
\[
{\langle\vec{A}\rangle_{F}^{K}}={\langle}\vec{A}_{\theta,q},A_{1}{\rangle}%
_{F}^{K};
\]

(v) for every ordered pair $\vec{A}=(A_{0},A_{1})$, there exist $\theta
_{1},\theta_{2}\in(0,1)$, $\theta_{1}\neq\theta_{2}$, and $1\leq q_{1}%
,q_{2}\leq\infty$ such that
\[
{\langle}\vec{A}_{\theta_{1},q_{1}},A_{1}{\rangle}_{F}^{K}={\langle}\vec
{A}_{\theta_{2},q_{2}},A_{1}{\rangle}_{F}^{K};
\]

(vi) there exist $\theta_{1},\theta_{2}\in(0,1)$, $\theta_{1}\neq\theta_{2}$,
and $1\leq q_{1},q_{2}\leq\infty$ such that
\[
{\langle}(L^{1},{L}^{\infty})_{\theta_{1},q_{1}},{L}^{\infty}{\rangle}_{F}%
^{K}={\langle}(L^{1},{L}^{\infty})_{\theta_{2},q_{2}},{L}^{\infty}{\rangle
}_{F}^{K}\text{.}%
\]

\end{corollary}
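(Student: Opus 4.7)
The plan is to recognize that Corollary~\ref{cor2} is not an independent result but rather a bookkeeping aggregation of the two preceding theorems, specialized to the setting where $F$ is an interpolation lattice with respect to $(L^\infty, L^\infty(1/t))$. So the proof reduces to matching the six conditions to hypotheses already proved to be equivalent, and leveraging the one nontrivial external input: that the pair $(L^1, L^\infty)$ is $Conv_0$-abundant on $[0,1]$ (Example~\ref{exampleabundant}).

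First I would dispose of the equivalence $(i) \Leftrightarrow (ii) \Leftrightarrow (iii)$ by direct appeal to Theorem~\ref{ExtrMinLat}. Indeed, with the present hypothesis on $F$, conditions $(i)$, $(ii)$, $(iii)$ of the corollary are verbatim conditions $(a)$, $(b)$, $(c)$ of that theorem. The implications $(a) \Rightarrow (b) \Rightarrow (d)$, $(a) \Rightarrow (c) \Rightarrow (d)$, and $(d) \Rightarrow (a)$ are all supplied by Theorem~\ref{ExtrMinLat}, so no further work is needed here.

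Next, the equivalence $(i) \Leftrightarrow (iv) \Leftrightarrow (v) \Leftrightarrow (vi)$ is a restatement of Theorem~\ref{TH2}: the four conditions in that theorem — boundedness of $T$ on $F$, reiteration for all ordered pairs, reiteration for some ordered pair with two distinct parameter choices, and reiteration for the concrete pair $(L^1, L^\infty)$ — correspond one-to-one with $(i)$, $(iv)$, $(v)$, $(vi)$ of the corollary. Since the present $F$ satisfies the interpolation hypothesis required by Theorem~\ref{TH2}, the equivalences transfer directly.

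No substantive step in this plan should be an obstacle; the only thing to check is that the normalization conventions and the assumption $L^\infty \subset F \subset L^\infty(1/t)$ (interpolation with respect to $(L^\infty, L^\infty(1/t))$) are precisely the standing hypotheses of both Theorem~\ref{ExtrMinLat} and Theorem~\ref{TH2}, which they are. The role of $(L^1, L^\infty)$ being $Conv_0$-abundant is what makes condition $(vi)$ — which at first glance looks weaker than $(v)$ — strong enough to imply $(i)$ via Theorem~\ref{ExtrMinLat}(d) $\Rightarrow$ (a), rather than only via Theorem~\ref{TH2}.
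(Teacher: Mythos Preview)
Your proposal is correct and matches the paper's approach: the corollary follows directly by combining Theorems~\ref{ExtrMinLat} and~\ref{TH2}, which share condition $(i)$ as a common pivot. Your final remark slightly misattributes the role of $Conv_0$-abundance---condition $(vi)$ implies $(i)$ via Theorem~\ref{TH2} alone (where $Conv_0$-abundance of $(L^1,L^\infty)$ is used internally in that theorem's proof of $(iv)\Rightarrow(i)$), not via Theorem~\ref{ExtrMinLat}(d), since $(vi)$ is a reiteration statement rather than the extrapolation equivalence~\eqref{BasEq}.
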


\begin{remark}
Applying functors ${\langle}\cdot,\cdot{\rangle}_{F}^{K}$, with $F$ satisfying
the conditions of Corollary~\ref{cor2}, to the pair $(L^{1}[0,1],L^{\infty
}[0,1])$, we obtain the so-called \textit{strong extrapolation} rearrangement
invariant spaces introduced and studied in \cite{AL2009,AL,AL2017}. The latter
ones can be characterized via the boundedness of the operator $S_{2}%
f(t):=f(t^{2})$ (see the proof of Theorem~\ref{TH2}). In particular, in this
connection it is instructive to compare Corollary~\ref{cor2} with Theorem~4.3
from \cite{AL2017}.
\end{remark}

Next, we investigate the reiteration properties of limiting interpolation
spaces which are close to the larger "end point" of an ordered pair.{\ To
avoid imposing extra conditions needed to use duality (see
Theorem~\ref{StrExtrAbst(J)}), we prefer to handle this case using limiting
interpolation $K$-functors. As was observed in the Introduction, the simplest
functor of such a form, $\vec{A}\mapsto\langle\vec{A}\rangle_{0,1}^{K}$, was
introduced and studied in the paper \cite{GM} (see also \cite{CF-CKU}). The
following theorem is a far-reaching generalization and a refinement of these
results (cf. Remark~\ref{non-interpolation} below). }

\begin{theorem}
\label{limiting reiteration} Let $G$ be an interpolation Banach lattice on
$[0,1]$ with respect to the pair $({L}^{\infty},{L}^{\infty}(1/t))$. The
following conditions are equivalent:

(a) the operator $Rf(t):=f(t^{1/2})$ is bounded on $G$;

(b) for every ordered pair $\vec{A}=(A_{0},A_{1})$ and for every $\theta
\in(0,1)$, $1\leq q\leq\infty,$ we have
\[
{\langle}\vec{A}{\rangle}_{G}^{K}={\langle}A_{0},\vec{A}_{\theta,q}{\rangle
}_{G}^{K};
\]

(c) for every ordered pair $\vec{A}=(A_{0},A_{1})$ there exist $\theta
_{1},\theta_{2}\in(0,1)$, $\theta_{1}\neq\theta_{2}$, $1\leq q_{1},q_{2}%
\leq\infty,$ such that
\[
{\langle}A_{0},\vec{A}_{\theta_{1},q_{1}}{\rangle}_{G}^{K}={\langle}A_{0}%
,\vec{A}_{\theta_{2},q_{2}}{\rangle}_{G}^{K};
\]

(d) there exist $\theta_{1},\theta_{2}\in(0,1)$, $\theta_{1}\neq\theta_{2}$,
$1\leq q_{1},q_{2}\leq\infty,$ such that
\[
{\langle}L^{1},(L^{1},{L}^{\infty})_{\theta_{1},q_{1}}{\rangle}_{G}%
^{K}={\langle}L^{1},(L^{1},{L}^{\infty})_{\theta_{2},q_{2}}{\rangle}_{G}%
^{K}\text{.}%
\]

\end{theorem}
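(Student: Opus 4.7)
The plan is to prove $(a)\Rightarrow(b)\Rightarrow(c)\Rightarrow(d)\Rightarrow(a)$, following the architecture of Theorem \ref{TH2} with $R$ playing the role that $T$ played there and with Holmstedt's formula applied at the ambient-space end of the pair. For $(a)\Rightarrow(b)$, the inclusion $\langle A_0,\vec A_{\theta,q}\rangle_G^K\supset\langle\vec A\rangle_G^K$ is immediate from $A_1\hookrightarrow\vec A_{\theta,q}$, yielding $K(t,a;A_0,\vec A_{\theta,q})\leq c(\theta,q)K(t,a;\vec A)$. For the reverse, Holmstedt's formula with $\theta_0=0$, $q_0=\infty$ yields $K(t^{1/\theta},a;\vec A)\lesssim K(t,a;A_0,\vec A_{\theta,q})$; setting $\psi(t):=K(t^{1/\theta},a;\vec A)$ (which is increasing), one has $K(t,a;\vec A)=\psi(t^{\theta})$, and for $n\in\mathbb N$ with $2^{-n}\leq\theta$, the pointwise bound $\psi(t^{\theta})\leq\psi(t^{2^{-n}})=R^{n}\psi(t)$ on $(0,1)$ combined with the $G$-boundedness of $R$ gives the estimate. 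The implications $(b)\Rightarrow(c)$ and $(c)\Rightarrow(d)$ (applied to $(L^1[0,1],L^\infty[0,1])$) are routine.

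For $(d)\Rightarrow(a)$, sandwich the hypothesis via monotonicity of Lorentz spaces: with $\theta_1<\tilde\theta_1<\tilde\theta_2<\theta_2$ and $p_i:=1/(1-\tilde\theta_i)$, the chain $(L^1,L^\infty)_{\theta_2,q_2}\subset L^{p_2}\subset L^{p_1}\subset(L^1,L^\infty)_{\theta_1,q_1}$ forces $\langle L^1,L^{p_1}\rangle_G^K=\langle L^1,L^{p_2}\rangle_G^K=:X$ with $1<p_1<p_2<\infty$. Since the spaces are rearrangement invariant, reduce to $f=f^*$ throughout. For $0<r<p_1/p_2$, the substitution $S_rf(t):=f(t^r)$ preserves the decreasing cone and, combining $f^*(u)\leq u^{-1/p_1}\|f\|_{L^{p_1,\infty}}$ with the convergence of $\int_0^1 u^{1/r-1-p_2/p_1}\,du$, satisfies $\|S_rf\|_{L^{p_2}}\lesssim\|f\|_{L^{p_1}}$; also $\|S_rf\|_{L^1}\leq r^{-1}\|f\|_{L^1}$ since $u^{1/r-1}\leq 1$ on $(0,1)$. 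Interpolation of $K$-functionals and the equality in $(d)$ then yield $\|S_rf\|_X\lesssim\|f\|_X$ for decreasing inputs, which iterates as $S_r^n=S_{r^n}$.

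For the self-improvement $X=\langle L^1,L^\infty\rangle_G^K$, the inclusion $\langle L^1,L^\infty\rangle_G^K\subset X$ is immediate; for the reverse, combine the Holmstedt lower bound $K(t,g;L^1,L^{p_1})\geq K(t^{p_1'},g;L^1,L^\infty)$ with the change-of-variables identity (for $f=f^*$)
\[
K(t^{p_1'},S_{r^n}f;L^1,L^\infty)=r^{-n}\int_0^{t^{p_1'r^n}}u^{1/r^n-1}f^*(u)\,du\geq r^{-n}K(t^{p_1'r^n},f;L^1,L^\infty),
\]
valid since $u^{1/r^n-1}\geq 1$ on $(0,1)$. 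Choosing $n$ so that $p_1'r^n\leq 1$ and invoking the monotonicity of $K(\cdot,f;L^1,L^\infty)$ yields $\|K(\cdot,f;L^1,L^\infty)\|_G\leq C^n\|f\|_X$, completing the self-improvement.

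Finally, boundedness of $S_r$ on $X=\langle L^1,L^\infty\rangle_G^K$ together with $K(t,f;L^1,L^\infty)=\int_0^t f^*$ and the lower bound $\int_0^t(S_rf)^*(s)\,ds\geq r^{-1}\int_0^{t^r}f^*(u)\,du$ gives $\|h(t^r)\|_G\lesssim\|h\|_G$ for $h(t)=\int_0^t f^*(s)\,ds$; by the $Conv_0$-abundance of $(L^1,L^\infty)$ on $[0,1]$ (Example \ref{exampleabundant}) this extends to every concave increasing $h\in G$ with $h(0^+)=0$, then via the least concave majorant to all quasi-concave $h$, and finally to all of $G$ via the bounded mapping $f\mapsto\tilde f(t):=\sup_{0<s\leq 1}\min(1,t/s)|f(s)|$—bounded on $G$ by interpolation between $L^\infty$ and $L^\infty(1/t)$—so that $\|S_rf\|_G\lesssim\|f\|_G$. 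Iterating $S_r^n=S_{r^n}$ until $r^n\leq 1/2$ and using the pointwise bound $R\tilde f(t)=\tilde f(t^{1/2})\leq\tilde f(t^{r^n})=S_{r^n}\tilde f(t)$ on $(0,1)$ (since $\tilde f$ is increasing and $t^{1/2}\leq t^{r^n}$) then delivers $\|Rf\|_G\leq\|R\tilde f\|_G\leq C\|f\|_G$. The main obstacle is the self-improvement step, which requires careful tracking of how iterated substitutions interact with the Holmstedt formula while one works within the decreasing cone.
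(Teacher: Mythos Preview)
Your $(a)\Rightarrow(b)$ argument is correct and essentially the paper's. The gap is in $(d)\Rightarrow(a)$, and it is a sign error that cannot be patched.

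You work with $S_rf(t)=f(t^r)$ for $0<r<p_1/p_2<1$, and early on you correctly record that $u^{1/r-1}\le 1$ on $(0,1)$ (this is how you bound $\|S_rf\|_{L^1}$). But in the self-improvement step you reverse yourself and assert $u^{1/r^n-1}\ge 1$ in order to deduce
\[
\int_0^{t^{p_1'r^n}} u^{1/r^n-1}f^*(u)\,du \;\ge\; \int_0^{t^{p_1'r^n}} f^*(u)\,du.
\]
Since $r<1$, the exponent $1/r^n-1$ is positive and the inequality goes the other way; the displayed bound is false (take $f^*(u)=u^{-\alpha}$ with small $\alpha>0$ to see the two sides have different orders as $t\to 0$). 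The same mistake recurs in your final paragraph, where you assert $\int_0^t(S_rf)^*(s)\,ds\ge r^{-1}\int_0^{t^r}f^*(u)\,du$; the change of variables in fact gives $r^{-1}\int_0^{t^r}u^{1/r-1}f^*(u)\,du$, which is $\le r^{-1}\int_0^{t^r}f^*(u)\,du$.

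The problem is structural, not cosmetic. For $r<1$ and $f=f^*$ one has $S_rf(t)=f^*(t^r)\le f^*(t)$ pointwise on $(0,1)$, so the boundedness of $S_r$ on \emph{any} rearrangement-invariant space is automatic and carries no information about $G$ or about hypothesis $(d)$; your chain of deductions therefore cannot extract $(a)$ from it. The paper instead uses $Q_rf(s):=s^{1/r-1}f(s^{1/r})$ with $r>1$: this operator is bounded $L^1\to L^1$ and $L^p\to L^q$ for all $f$ (via H\"older, so ordinary linear interpolation applies without restricting to the decreasing cone), and one has the exact identity
\[
\int_0^t(Q_rf^*)^*(s)\,ds \;=\; r\int_0^{t^{1/r}}f^*(u)\,du.
\]
Thus $\|Q_2f\|_X\le C\|f\|_X$ translates directly into $\|Rh\|_G\le C\|h\|_G$ for $h(t)=\int_0^t f^*$. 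The point is that the substitution $s\mapsto s^{1/r}$ with $r>1$ moves in the same direction as $R$ (towards larger arguments in $(0,1)$), whereas your $S_r$ with $r<1$ moves the opposite way and loses exactly the information you need.
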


\begin{proof}
The implications $(b)\Rightarrow(c)$ and $(c)\Rightarrow(d)$ are
straightforward. Therefore we only need to prove the implications
$(a)\Rightarrow(b)$ and $(d)\Rightarrow(a)$.

$(a)\Rightarrow(b)$. First, observe that for arbitrary $p>1$, the restriction
of the operator $R_{p}f(t):=f(t^{1/p})$ to the set of all increasing functions
is bounded on $G$. Indeed, by iteration, we see that together with $R$ the
operator $R_{2^{m}}=R^{m}$ is bounded on $G$ for each $m\in\mathbb{N}$. Given
$p>1$ we pick $m$ such that $2^{m}\geq p$. Let $g$ be an increasing function
$g\in G$, then on account of the fact that $t^{1/p}\leq t^{2^{-m}}$ for $0\leq
t\leq1$, we obtain
\[
\Vert R_{p}g\Vert_{G}\leq\Vert R_{2^{m}}g\Vert_{G}\leq\Vert R_{2^{m}}%
\Vert\Vert g\Vert_{G}\text{.}%
\]
Let $f\in A_{0}+A_{1},\theta\in(0,1),$ and set $g=K(t,f;\vec{A}).$ The
previous inequality implies that
\[
\Vert K(t,f;\vec{A})\Vert_{G}=\Vert R_{1/\theta}(K(t^{1/\theta},f;\vec
{A}))\Vert_{G}\leq\Vert R_{1/\theta}\Vert\Vert K(t^{1/\theta},f;\vec{A}%
)\Vert_{G}\text{.}%
\]
On the other hand, by Holmstedt's formula, we have
\[
K(t,f;A_{0},\vec{A}_{\theta,\infty}^{K})\asymp t\underset{t^{1/\theta}\leq
s\leq1}{\sup}s^{-\theta}K(s,f;\vec{A})\geq K(t^{1/\theta},f;\vec{A})\text{,
for all }0<t\leq1.
\]
It follows that%
\[
\Vert K(t,f;A_{0},\vec{A}_{\theta,q}^{K})\Vert_{G}\geq\Vert K(t,f;A_{0}%
,\vec{A}_{\theta,\infty}^{K})\Vert_{G}\geq c\Vert K(t^{1/\theta},f;\vec
{A})\Vert_{G}\text{, for all }1\leq q\leq\infty.
\]
Thus,
\[
{\langle}A_{0},\vec{A}_{\theta,q}^{K}{\rangle}_{G}^{K}\subset\vec{A}_{G}%
^{K}\text{.}%
\]
This gives the desired result since the converse embedding is obvious.

$(d)\Rightarrow(a)$. Proceeding exactly as in the proof of Theorem~\ref{TH2},
we see that $(d)$ guarantees the existence of $1<q<p<\infty$ such that
\begin{equation}
{\langle L}^{1},{L}^{p}{\rangle}_{G}^{K}={\langle L}^{1},{L}^{q}{\rangle}%
_{G}^{K}\text{.} \label{5.7}%
\end{equation}
It will be useful now to consider a family of auxiliary operators defined as
follows. For each $r>1,$ we let $Q_{r}$ be the operator defined by
$Q_{r}f(s):=s^{1/r-1}f(s^{1/r})$, $0<s\leq1$, $f\in$ ${L}^{1}={L}^{1}[0,1].$
By an easy change of variables we see that the operators $Q_{r}$ are bounded
on ${L}^{1},$ for every $r>1$ . We will now show that there exists $r_{0}>1$
such that, for all $1<r<r_{0}$, $Q_{r}$ is a bounded operator, $Q_{r}:$
${L}^{p}\rightarrow{L}^{q}$. In fact, we can let $r_{0}:=\frac{q(p-1)}%
{p(q-1)}=\frac{q^{\prime}}{p^{\prime}}>1.$ Indeed, since for $1<r<r_{0}%
:=\frac{q(p-1)}{p(q-1)},$ we have $\int_{0}^{1}u^{\frac{(r-1)(1-q)p}{p-q}%
}\;du<\infty,$ H\"{o}lder's inequality yields
\begin{align*}
\Vert Q_{r}f\Vert_{q}^{q}  &  =\int_{0}^{1}(s^{1/r-1})^{q}|f(s^{1/r}%
)|^{q}\;ds=r\int_{0}^{1}u^{(r-1)(1-q)}|f(u)|^{q}\;du\\
&  \leq r\left(  \int_{0}^{1}u^{\frac{(r-1)(1-q)p}{p-q}}\;du\right)
^{\frac{p-q}{p}}\left(  \int_{0}^{1}|f(u)|^{p}\;du\right)  ^{q/p}\text{.}%
\end{align*}
Thus, $\Vert Q_{r}f\Vert_{q}\leq C\Vert f\Vert_{p}$, with a constant $C$ that
depends only on $p,q,r$. Moreover, since $Q_{r}$ is also bounded on $L^{1},$
it follows by interpolation and (\ref{5.7}) that $Q_{r}$ is bounded on the
space $X:={\langle L}^{1},{L}^{p}{\rangle}_{G}^{K}$, for all $1<r<r_{0}.$
Observe that for each $r>1$ and $k\in\mathbb{N}$, we have $Q_{r}^{k}=Q_{r^{k}%
},$ whence $Q_{r}$ is actually bounded on $X$ for all $r>1$. Furthermore, it
is plain that $X$ is also an interpolation space with respect to the pair
$({L}^{1},{L}^{\infty})$, whence $X$ is a rearrangement invariant space. As a
matter of fact we now show that, more precisely, $X$ can be described by
\begin{equation}
X={\langle L}^{1},{L}^{\infty}{\rangle}_{G}^{K}\text{.} \label{EQ111}%
\end{equation}
The preceding discussion shows that
comparing norms of the spaces from (\ref{EQ111}) we may assume without loss of
generality that $f=f^{\ast}.$ Let $p^{\prime}$ be defined as usual by
$1/p+1/{p^{\prime}}=1.$ Then, by Holmstedt's formula for the pair $({L}%
^{1},{L}^{p})$ (cf. \cite[Ch. 5. Section 7, Problem 2, page 124]{BL}), we
have
\begin{align*}
K(t,Q_{p^{\prime}}f^{\ast};{L}^{1},{L}^{p})  &  \succeq\int_{0}^{t^{p^{\prime
}}}(Q_{p^{\prime}}f^{\ast})^{\ast}(s)\;ds=\int_{0}^{t^{p^{\prime}}%
}s^{1/p^{\prime}-1}f^{\ast}(s^{1/p^{\prime}})\;ds\\
&  =p^{\prime}\int_{0}^{t}f^{\ast}(u)\;du\\
&  =p^{\prime}K(t,f;{L}^{1},{L}^{\infty})\text{.}%
\end{align*}
Thus, for some $c>0$
\[
\Vert Q_{p^{\prime}}f\Vert_{X}=\Vert K(t,Q_{p^{\prime}}f;{L}^{1},{L}^{p}%
)\Vert_{G}\geq cp^{\prime}\Vert K(t,f;{L}^{1},{L}^{\infty})\Vert_{G}\text{.}%
\]
On the other hand, on account of the fact that $Q_{p^{\prime}}$ is bounded on
$X$, we obtain
\[
\Vert Q_{p^{\prime}}f\Vert_{X}\leq\Vert Q_{p^{\prime}}\Vert\Vert f\Vert
_{X}=\Vert Q_{p^{\prime}}\Vert\Vert K(t,f;{L}^{1},{L}^{p})\Vert_{G}\leq\Vert
Q_{p^{\prime}}\Vert\Vert K(t,f;{L}^{1},{L}^{\infty})\Vert_{G}\text{,}%
\]
and (\ref{EQ111}) follows.


From \eqref{EQ111}, \cite[Theorem~5.2.1]{BL} and the boundedness of $Q_{2}$ on
$X$ we have
\[
\left\Vert \int_{0}^{t}(Q_{2}f^{\ast})^{\ast}(s)\;ds\right\Vert _{G}\leq
C^{\prime}\Vert Q_{2}f^{\ast}\Vert_{X}\leq C^{\prime}\Vert Q_{2}\Vert\Vert
f\Vert_{X}\leq C\left\Vert \int_{0}^{t}f^{\ast}(s)\;ds\right\Vert _{G}\text{.}%
\]
Combining this estimate with the equation
\[
\int_{0}^{t}(Q_{2}f^{\ast})^{\ast}(s)\;ds=\int_{0}^{t}s^{1/2}f^{\ast}%
(s^{1/2})\;\frac{ds}{s}=2\int_{0}^{t^{1/2}}f^{\ast}(s)\;ds\text{,}%
\]
we arrive at the inequality
\[
\left\Vert \int_{0}^{t^{1/2}}f^{\ast}(s)\;ds\right\Vert _{G}\leq{C}\left\Vert
\int_{0}^{t}f^{\ast}(s)\,ds\right\Vert _{G}\text{.}%
\]
Equivalently, setting $g(t):=\int_{0}^{t}f^{\ast}(s)\,ds$, we have
\[
\Vert Rg\Vert_{G}\leq C\Vert g\Vert_{G}.
\]
Hence, the latter inequality holds for every concave increasing function $g\in
G$ such that $\underset{t\rightarrow0}{\lim}g(t)=0$. Observe that $(a)$ holds
if $G={L}^{\infty}$. Therefore, we can assume that $R$, restricted to the set
of all concave increasing functions from $G$, is bounded on $G$. Thus,
proceeding in the same way as in the proof of Theorem~\ref{ExtrMinLat}, we can
show that the operator $R$ is bounded on all of $G$.
\end{proof}

\begin{remark}
\label{non-interpolation}A straightforward inspection shows that the
implication $(i)\Rightarrow(ii)$ (resp. $(a)\Rightarrow(b)$) of Theorem
\ref{TH2} (resp. Theorem \ref{limiting reiteration}) holds under the weaker
assumptions that $G$ is a Banach lattice on $[0,1]$ such that $G\supset
{L}^{\infty}\cap{L}^{\infty}(1/t)$ and the operator $Rf(t)=f(t^{1/2})$ is
bounded on ${G}$. Clearly, the Banach lattice $L^{1}([0,1],\frac{dt}{t})$
satisfies the above conditions. Therefore, since ${\langle}\vec{A}{\rangle
}_{0,1}^{K}={\langle}\vec{A}{\rangle}_{L^{1}([0,1],\frac{dt}{t})}^{K}$, then
from the implication $(a)\Rightarrow(b)$ of Theorem \ref{limiting reiteration}
it follows, in particular, the reiteration formula \eqref{intr1}.
\end{remark}

Observe that one of the main motivations for the reiteration theorem obtained
in \cite{GM} was the following interpolation theorem of Zygmund (cf.
\cite{AZYG}) originally stated for the periodic Hilbert transform.

\begin{theorem}
\label{zyg} Let $T$ be a{\ quasilinear} operator defined on the space
$L^{1}:=L^{1}(0,1)$ such that $T$ is of weak type $(1,1)$ and strong type
$(p,p),$ for some $p>1.$ Then $T:LLogL\rightarrow L^{1}$.
\end{theorem}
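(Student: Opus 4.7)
The plan is to follow the classical ``Marcinkiewicz then extrapolation'' route sketched in the footnote of the Foreword, organized around the sum representation
\begin{equation*}
LLogL(0,1) \;=\; \sum_{p>1}\frac{L^{p}(0,1)}{p-1}
\end{equation*}
recorded in (\ref{fore0}).

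The first step is to apply the classical Marcinkiewicz interpolation theorem (cf.\ \cite[Theorem 4.1, page 91]{torchinsky}) between the endpoints provided by the hypotheses: $T$ of weak type $(1,1)$ and of strong type $(p_{0},p_{0})$. This yields $T\colon L^{p}(0,1)\to L^{p}(0,1)$ for every $p\in(1,p_{0})$, together with the quantitative estimate
\begin{equation*}
\|T\|_{L^{p}\to L^{p}}\;\le\;\frac{C}{p-1},
\end{equation*}
where $C$ depends only on the weak type $(1,1)$ and strong type $(p_{0},p_{0})$ constants of $T$. This $1/(p-1)$ blow-up rate is precisely what is needed to balance the weights $1/(p-1)$ in the sum representation of $LLogL$.

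The second step is extrapolation via (\ref{fore0}). Given $f\in LLogL(0,1)$ and $\varepsilon>0$, choose a decomposition $f=\sum_{n\ge 1} f_{n}$ with $f_{n}\in L^{p_{n}}(0,1)$, $p_{n}\in(1,p_{0})$, realizing
\begin{equation*}
\sum_{n\ge 1} \frac{\|f_{n}\|_{L^{p_{n}}}}{p_{n}-1}\;\le\;(1+\varepsilon)\,\|f\|_{LLogL}.
\end{equation*}
Quasi-linearity of $T$ applied to finite partial sums $F_{N}=\sum_{n\le N}f_{n}$, together with convergence $F_{N}\to f$ and a routine Fatou/convergence-in-measure argument, yields the pointwise bound $|Tf|\preceq \sum_{n\ge 1}|Tf_{n}|$ almost everywhere. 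Since $(0,1)$ has measure one, $\|g\|_{L^{1}}\le \|g\|_{L^{p_{n}}}$, and hence
\begin{equation*}
\|Tf\|_{L^{1}}\;\preceq\;\sum_{n\ge 1}\|Tf_{n}\|_{L^{p_{n}}}\;\le\;C\sum_{n\ge 1}\frac{\|f_{n}\|_{L^{p_{n}}}}{p_{n}-1}\;\le\;C(1+\varepsilon)\,\|f\|_{LLogL}.
\end{equation*}
Letting $\varepsilon\to 0$ closes the argument.

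The main technical point is obtaining the sharp Marcinkiewicz constant of order $1/(p-1)$ as $p\to 1^{+}$; this is what translates the logarithmic integrability intrinsic to $LLogL$ into the quantitative interpolation estimate. The quasi-linearity limiting argument and the embedding $L^{p}\subset L^{1}$ on a probability space are routine; once the sharp Marcinkiewicz bound is in hand, the extrapolation identity (\ref{fore0}) renders the passage from the scale of $L^{p}$ inequalities to the single endpoint statement $T\colon LLogL\to L^{1}$ essentially formal. An equivalent, more abstract, route would instead apply directly the $\Sigma$-functor interpolation property to the family $T\colon L^{p}/(p-1)\to L^{p}$, which is uniformly bounded in $p$ by the Marcinkiewicz step.
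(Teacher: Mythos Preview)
Your approach follows the ``Marcinkiewicz then Yano extrapolation'' route sketched in the paper's opening footnote, and for \emph{linear} or \emph{sublinear} $T$ it is correct. However, the statement is for \emph{quasilinear} $T$, and your second step has a gap there: iterating the quasilinearity inequality $|T(f+g)|\le K(|Tf|+|Tg|)$ over a decomposition $f=\sum_{n\ge1}f_n$ produces constants of order $K^{N}$ on the $N$-th partial sum, so the claimed pointwise bound $|Tf|\preceq\sum_{n\ge1}|Tf_n|$ with a uniform implied constant does not follow from quasilinearity plus a Fatou argument. The same obstruction applies to your alternative via the $\Sigma$-functor interpolation property, which in \cite{JM91} is formulated for linear operators. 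For quasilinear $T$ one must either revert to a two-piece (layer-cake) decomposition at each level $\lambda$, as in the classical direct proof of Zygmund's inequality, or use a $K$-method argument.

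The paper does not prove Theorem~\ref{zyg} directly; it states it and then proves a generalization (the theorem immediately following) whose specialization $G=L^{1}([0,1],\tfrac{dt}{t})$ recovers Zygmund's result (see the remark after that proof). That argument is genuinely different from yours: rather than Marcinkiewicz followed by $\Sigma$-extrapolation, it interpolates the two endpoint hypotheses directly via the $K$-functor $\langle\cdot,\cdot\rangle_{G}^{K}$ to obtain $T:\langle L^{1},L^{p}\rangle_{G}^{K}\to\langle L^{1,\infty},L^{p,\infty}\rangle_{G}^{K}$, then invokes the limiting reiteration Theorem~\ref{limiting reiteration} to identify the domain with $\langle L^{1},L^{\infty}\rangle_{G}^{K}=LLogL$, and finally uses a Holmstedt-type estimate to embed the range in $L^{1}$. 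Because the $K$-method only ever splits an element into two pieces, quasilinearity causes no difficulty there. Your route is the more elementary and classical one when $T$ is (sub)linear; the paper's route trades Marcinkiewicz for the reiteration machinery of Section~\ref{sec:reiteration} and thereby handles quasilinear $T$ without any extra care.
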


Applying Theorem~\ref{limiting reiteration}, we can extend the latter result
to a rather wide class of limiting interpolation spaces close to $L^{1}$.
Recall that $f^{\ast\ast}(t):=\frac{1}{t}\int_{0}^{t}f^{\ast}(s)\,ds$, and the
space $L^{p,\infty}$, $1\leq p<\infty$, consists of all measurable functions
on $[0,1]$ such that
\[
\Vert f\Vert_{L^{p,\infty}}:=\sup_{0<t\leq1}t^{1/p}f^{\ast}(t)<\infty.
\]

\begin{theorem}
Under the assumptions of Theorem \ref{zyg}, suppose that $G$ is a Banach
lattice on $[0,1]$ such that $G\supset{L}^{\infty}\cap{L}^{\infty}(1/t)$ and
the operation $f(t)\mapsto f(\sqrt{t})$ is bounded on $G$. Then we have
\[
T:\,\langle L^{1},L^{\infty}\rangle_{G}^{K}\rightarrow X_{G},
\]
where%
\[
\langle L^{1},L^{\infty}\rangle_{G}^{K}=\{f:\left\Vert tf^{\ast\ast
}(t)\right\Vert _{G}<\infty\},\text{ and }X_{G}:=\{f:\left\Vert tf^{\ast
}(t)\right\Vert _{G}<\infty\}.
\]

\end{theorem}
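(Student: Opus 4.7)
The plan is to combine the reiteration formula from Remark \ref{non-interpolation} with the standard weak-type/strong-type $K$-functional interpolation, and then absorb a power substitution into the $G$-norm by iterating the hypothesis that $R$ is bounded on $G$. Since $(L^1,L^\infty)_{1-1/p,p}^K = L^p$ with equivalent norms on $[0,1]$, and $G$ satisfies exactly the hypotheses listed in Remark \ref{non-interpolation} for the implication $(a)\Rightarrow(b)$ of Theorem \ref{limiting reiteration}, that implication applied to the ordered pair $(L^1,L^\infty)$ yields
\[
\langle L^1, L^\infty \rangle_G^K = \langle L^1, L^p \rangle_G^K,
\]
with equivalence of norms. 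In particular $\langle L^1, L^\infty \rangle_G^K \subset L^1[0,1]$, so $Tf$ is well-defined for every $f$ in the domain.

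Next, for any decomposition $f = f_0 + f_1$ with $f_0 \in L^1$, $f_1 \in L^p$, the quasilinearity of $T$ gives $(Tf)^*(t) \leq C\bigl((Tf_0)^*(t/2) + (Tf_1)^*(t/2)\bigr)$; combining this with the weak-type bound $s(Tf_0)^*(s) \leq M_1\|f_0\|_1$ and the Chebyshev-type bound $(Tf_1)^*(s) \leq M_p\, s^{-1/p}\|f_1\|_p$, multiplying by $t$ and taking the infimum over admissible decompositions, one obtains
\[
t(Tf)^*(t) \leq C\,K(t^{1-1/p},f;L^1,L^p), \qquad 0<t\leq 1.
\]
Since $Rh(t)=h(t^{1/2})$ is bounded on $G$, iteration shows that $R^n h(t)=h(t^{1/2^n})$ is bounded on $G$ for every $n$. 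Choosing $n$ with $2^n\geq p':=p/(p-1)$ ensures $t^{1/p'}\leq t^{1/2^n}$ for $t\in(0,1)$, and therefore for every nonnegative increasing function $h$,
\[
\|h(t^{1-1/p})\|_G = \|h(t^{1/p'})\|_G \leq \|R\|^n\|h\|_G.
\]
Applying this to the increasing function $h(s):=K(s,f;L^1,L^p)$ and combining with the displayed pointwise bound and the reiteration identity yields
\[
\|t(Tf)^*(t)\|_G \leq C\,\|K(t^{1-1/p},f;L^1,L^p)\|_G \leq C'\,\|f\|_{\langle L^1,L^p\rangle_G^K} \asymp \|f\|_{\langle L^1,L^\infty\rangle_G^K},
\]
which gives $T\colon \langle L^1,L^\infty\rangle_G^K \to X_G$ as claimed.

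The main conceptual step is the reiteration identity: by routing through the intermediate pair $(L^1,L^p)$, we replace the outer endpoint $L^\infty$, on which $T$ need not be bounded, with $L^p$, where both hypotheses on $T$ are directly available. The remainder is a standard $K$-decomposition combined with the iteration trick that exploits the boundedness of $R$ on $G$ to absorb the change of variable $t\mapsto t^{1-1/p}$. No further subtlety appears to be hidden in the argument; verifying that the hypotheses of Remark \ref{non-interpolation} are indeed met by the $G$ of the statement (so that the reiteration identity is truly applicable) is the only check one has to perform carefully.
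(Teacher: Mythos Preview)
Your proof is correct and shares the two essential ingredients with the paper's argument: the reiteration identity $\langle L^{1},L^{\infty}\rangle_{G}^{K}=\langle L^{1},L^{p}\rangle_{G}^{K}$ coming from Remark~\ref{non-interpolation}, and the iteration of $R$ to absorb the substitution $t\mapsto t^{1/p'}$ when applied to increasing functions. The middle step, however, is organized differently. The paper first invokes the interpolation property of the functor $\langle\cdot,\cdot\rangle_{G}^{K}$ to obtain $T:\langle L^{1},L^{p}\rangle_{G}^{K}\rightarrow\langle L^{1,\infty},L^{p,\infty}\rangle_{G}^{K}$, and then proves the embedding $\langle L^{1,\infty},L^{p,\infty}\rangle_{G}^{K}\subset X_{G}$ via Holmstedt's formula for the (quasi-Banach) pair $(L^{1,\infty},L^{p,\infty})$ together with the known estimate $K(t,g;L^{1,\infty},L^{\infty})\succeq tg^{\ast}(t)$. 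You instead bypass the weak-type pair entirely and obtain the pointwise bound $t(Tf)^{\ast}(t)\leq C\,K(t^{1/p'},f;L^{1},L^{p})$ directly from a single $K$-decomposition and Chebyshev. Your route is slightly more elementary, since it avoids the Holmstedt estimate for a quasi-Banach pair; the paper's route, on the other hand, is more modular, separating the operator bound (pure interpolation) from the embedding of the range space into $X_{G}$.
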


\begin{remark}
Note that if $G=L^{1}([0,1],\frac{dt}{t})$, then $X_{G}=L^{1}$ and $\langle
L^{1},L^{\infty}\rangle_{G}^{K}=\langle L^{1},L^{\infty}\rangle_{0,1}%
^{K}=LLogL$.
\end{remark}

\begin{proof}
By interpolation,%
\[
T:\,\langle L^{1},L^{p}\rangle_{G}^{K}\rightarrow\langle L^{1,\infty
},L^{p,\infty}\rangle_{G}^{K}.
\]
Since (cf. \cite[Theorem~5.2.1]{BL})
\[
L^{p}=(L^{1},L^{\infty})_{1/p^{\prime},p}^{K},
\]
where $p^{\prime}=p/(p-1)$, it follows from Theorem \ref{limiting reiteration}
that%
\begin{align*}
\langle L^{1},L^{p}\rangle_{G}^{K}  &  =\langle L^{1},(L^{1},L^{\infty
})_{1/p^{\prime},p}^{K}\rangle_{G}^{K}\\
&  =\langle L^{1},L^{\infty}\rangle_{G}^{K}\\
&  =\{f:\left\Vert tf^{\ast\ast}(t)\right\Vert _{G}<\infty\}.
\end{align*}
Next we deal with the range space $\langle L^{1,\infty},L^{p,\infty}%
\rangle_{G}^{K}$. Since $L^{1,\infty}$ is not a Banach space, we cannot apply
Theorem \ref{limiting reiteration} to the pair $(L^{1,\infty},L^{p,\infty}).$
Instead, we show directly that%
\[
\langle L^{1,\infty},L^{p,\infty}\rangle_{G}^{K}\subset\{f:\left\Vert
tf^{\ast}(t)\right\Vert _{G}<\infty\},
\]
by means of proving that%
\begin{equation}
\left\Vert f\right\Vert _{\langle L^{1,\infty},L^{p,\infty}\rangle_{G}^{K}%
}\geq c\left\Vert tf^{\ast}(t)\right\Vert _{G}, \label{verify1}%
\end{equation}
using the $G-$boundedness of the operator $R_{1/p^{\prime}}%
f(t)=f(t^{1/p^{\prime}})$ restricted to the set of increasing functions.
Indeed, writing $L^{p,\infty}=(L^{1,\infty},L^{\infty})_{1/p^{\prime},\infty}$
(cf. \cite[Theorem~5.3.1]{BL}) and applying Holmstedt's formula (cf.
\cite[Corollary~3.6.2(a)]{BL}), for each $0<t\leq1$ we have
\begin{align*}
K(t,f;L^{1,\infty},L^{p,\infty})  &  \asymp K(t,f;L^{1,\infty},(L^{1,\infty
},L^{\infty})_{1/p^{\prime},\infty})\\
&  \asymp t\underset{t^{p^{\prime}}\leq s\leq1}{\sup}s^{-1/p^{\prime}%
}K(s,f;L^{1,\infty},L^{\infty})\\
&  \geq K(t^{p^{\prime}},f;L^{1,\infty},L^{\infty})\text{.}%
\end{align*}
By the proof of [$(a)\Rightarrow(b)]$ in Theorem \ref{limiting reiteration},
we know that the operator $R_{1/p^{\prime}}f(t)=f(t^{1/p^{\prime}})$
restricted to the set of increasing functions is $G-$bounded. Consequently,
since $K(t^{p^{\prime}},f;L^{1,\infty},L^{\infty})$ is increasing, there
exists an absolute constant $c>0$ such that%
\[
\left\Vert f\right\Vert _{\langle L^{1,\infty},L^{p,\infty}\rangle_{G}^{K}%
}=\left\Vert K(t,f;L^{1,\infty},L^{p,\infty})\right\Vert _{G}\geq c\left\Vert
K(t,f;L^{1,\infty},L^{\infty})\right\Vert _{G}.
\]
But it is well known (cf. \cite{tor} and the references therein) that%
\begin{align*}
K(t,f;L^{1,\infty},L^{\infty})  &  \asymp\sup_{s<t}\{sf^{\ast}(s)\}\\
&  \geq tf^{\ast}(t).
\end{align*}
Hence, (\ref{verify1}) follows concluding the proof.
\end{proof}

\begin{remark}
\label{parameters} As was observed in Introduction, the theory developed in
this paper provides a unified roof to a large body of literature devoted to
the study of particular examples of limiting interpolation spaces. We now
briefly discuss the connections with our work. Let $\vec{A}=(A_{0},A_{1})$ be
a Banach pair such that $A_{1}\subset A_{0}$. Given $b>0$ and $1\leq
q\leq\infty,$ we define the Banach lattice $F_{b,q}$ normed by
\[
\Vert f\Vert_{F_{b,q}}:=\left(  \int_{0}^{1}\left(  \frac{|f(s)|}{s(1-\log
s)^{b}}\right)  ^{q}\frac{ds}{s}\right)  ^{1/q}\;\;if\;\;1\leq q<\infty,
\]
and
\[
\Vert f\Vert_{F_{b,\infty}}:=\sup_{0<s\leq1}\frac{|f(s)|}{s(1-\log s)^{b}}.
\]
The spaces $\langle\vec{A}\rangle_{F_{b,\infty}}^{K}$ have been introduced,
using a different notation, by Cobos, Fern\'{a}ndez-Cabrera, Manzano, and
Mart\'{\i}nez in \cite[see Theorem~2.6]{Cobos1}). Later, Cobos, Fern{\'{a}}%
ndez-Cabrera, K\"{u}hn, and Ullrich \cite{CF-CKU}, considered various
properties of the family of the spaces $F_{1,q}$, $1<q\leq\infty.$ In the
paper \cite{Cobos-S2} one can find a more general construction of limiting
interpolation spaces using powers of iterated logarithms. Let $L_{1}(s)=\log
s$ and $L_{j}(s)=\log(L_{j-1}(s))$, $j>1$. For any $\bar{\alpha}=(\alpha
_{1},\dots,\alpha_{m})\in\mathbb{R}^{m}$ we set $\phi_{\bar{\alpha}}%
(s):=\prod_{j=1}^{m}L_{j}(s)^{\alpha_{j}}$. Then, under suitable conditions on
$\bar{\alpha}$, we can define the Banach lattice $F_{\bar{\alpha},q}^{\prime}$
consisting of all measurable functions $f$ on $[0,c_{m}]$ ($c_{m}\in(0,1)$
depends only on $m$) such that
\[
\Vert f\Vert_{F_{\bar{\alpha},q}^{\prime}}:=\left(  \int_{0}^{c_{m}}\left(
\frac{|f(s)|}{s\phi_{\bar{\alpha}}(s)}\right)  ^{q}\frac{ds}{s}\right)
^{1/q}\;\;if\;\;1\leq q<\infty
\]
and
\[
\Vert f\Vert_{F_{\bar{\alpha},\infty}^{\prime}}:=\sup_{0<s\leq c_{m}}%
\frac{|f(s)|}{s\phi_{\bar{\alpha}}(s)}.
\]
In \cite{Cobos-K}, Cobos and K\"{u}hn introduced the corresponding spaces
$\langle\vec{A}\rangle_{F_{\bar{\alpha},q}^{\prime}}^{K}$ and showed their
description by means of the $J$-functional.

In the papers \cite{Cobos-D, Cobos-D2, Cobos-D3}, the authors have considered
constructions, which give limiting interpolation spaces that are close to the
larger end point space of an ordered pair, i.e., to the space $A_{0}$. For any
$b\in\mathbb{R}$, and $1\leq q\leq\infty,$ we define the Banach lattice
$G_{b,q}$ normed by
\[
\Vert f\Vert_{G_{b,q}}:=\left(  \int_{0}^{1}\left(  \frac{|f(s)|}{(1-\log
s)^{b}}\right)  ^{q}\frac{ds}{s}\right)  ^{1/q}\;\;if\;\;1\leq q<\infty,
\]
and
\[
\Vert f\Vert_{G_{b,\infty}}:=\sup_{0<s\leq1}\frac{|f(s)|}{(1-\log s)^{b}}.
\]
It is easy to see that the operator $Tf(t)=f(t^{2})/t$ (resp. $Rf(t)=f(\sqrt
{t})$) is bounded on the lattices $F_{b,q}$ and $F_{\bar{\alpha},q}^{\prime}$
(resp. $R$ is bounded on $G_{b,q}$). To illustrate the issues that are
involved we shall now verify that $T$ is bounded on $F_{b,q}$. Indeed, after a
change of variables we have
\begin{align*}
\Vert Tf\Vert_{F_{b,q}}  &  =\left(  \int_{0}^{1}\left(  \frac{|f(s^{2}%
)|}{s^{2}(1-\log s)^{b}}\right)  ^{q}\frac{ds}{s}\right)  ^{1/q}%
=2^{-1/q}\left(  \int_{0}^{1}\left(  \frac{|f(t)|}{t(1-\log(\sqrt{t}))^{b}%
}\right)  ^{q}\frac{dt}{t}\right)  ^{1/q}\\
&  \leq2^{(b-1)/q}\left(  \int_{0}^{1}\left(  \frac{|f(t)|}{t(1-\log t)^{b}%
}\right)  ^{q}\frac{dt}{t}\right)  ^{1/q}=2^{(b-1)/q}\Vert f\Vert_{F_{b,q}}.
\end{align*}
Let us finally observe that some classes of limiting interpolation spaces were
also introduced and studied in the case of unordered pairs (see, for instance,
\cite{Cobos2}, \cite{Cobos-Segurado}, and \cite{Cobos-Segurado2}).
Nevertheless, usually these spaces can be represented as intersections of the
spaces obtained using the constructions described above.
\end{remark}

\section{Extrapolation description of limiting Schatten-von Neumann operator
classes and a generalization of Matsaev's theorem\label{sec:scha}}

We apply some of the results obtained in this paper to the Banach pair
$(\ell^{1},\ell^{\infty}):=$ $(\ell^{1}(\mathbb{N)},\ell^{\infty}(\mathbb{N)})
$ {\ (For a different approach, based on the direct estimation of the
$\ell^{p}$-norms involved, we refer to \cite{Lykov2018}).}

Recall that by Calder\'{o}n's theorem (cf. \cite{bs}) the pair $(\ell^{1}%
,\ell^{\infty})$ is $K-$monotone; consequently, if $X:=X(\mathbb{N)}$ is an
interpolation space with respect to $(\ell^{1},\ell^{\infty})$ there exists a
Banach sequence lattice $F:=F(\mathbb{N)}$ such that, uniformly on
$a=\{a_{k}\}\in X,$
\[
\Vert a\Vert_{X}\asymp\left\Vert {\{K(n,\{a_{k}\};\ell^{1},\ell^{\infty}%
)\}}_{n}\right\Vert _{F}.
\]
The corresponding $K-$functional is given by
\[
K(n,\{a_{k}\};\ell^{1},\ell^{\infty})=\sum_{j=1}^{n}a_{j}^{\ast}%
,\text{\ \ }n\in\mathbb{N}\text{, }\{a_{k}\}\in\ell^{\infty},
\]
where $\{a_{j}^{\ast}\}$ is the non-increasing rearrangement of the sequence
$\{|a_{k}|\}$. In other words, $X$ is an interpolation symmetric sequence
space with respect to $(\ell^{1},\ell^{\infty})$ if and only if there exists
Banach sequence lattice $F$ such that for all $a=\{a_{k}\}\in X$
\begin{equation}
\Vert a\Vert_{X}\asymp\Biggl\Vert\Bigl\{\sum_{j=1}^{n}a_{j}^{\ast}%
\Bigr\}_{n}\Biggr\Vert_{F}\text{.} \label{interp for seq}%
\end{equation}
Moreover, by Hardy, and reverse Hardy inequalities (cf. \cite[Lemma~2.1]{M-97}
or \cite[Example~7]{M-03}) it follows that for all $1<p<\infty$
\[
\Vert a\Vert_{\ell^{p}}\leq\Vert a\Vert_{(\ell^{1},\ell^{\infty}%
)_{1-1/p,p}^{K\blacktriangleleft}}=\Vert a\Vert_{(\ell^{\infty},\ell
^{1})_{1/p,p}^{K\blacktriangleleft}}\leq e\Vert a\Vert_{\ell^{p}},
\]
and, according to Remarks \ref{Rem1}\footnote{Since $\ell^{1}\subset
\ell^{\infty}$.} and \ref{Zam1a}, for all $1<p\leq2$
\[
\frac{1}{2}\Vert a\Vert_{(\ell^{\infty},\ell^{1})_{1/p,p}^{K\blacktriangleleft
}}\leq\Vert a\Vert_{\langle\langle\ell^{1},\ell^{\infty}\rangle\rangle
_{1-1/p,p}^{K\blacktriangleleft}}=\Vert a\Vert_{\langle\ell^{\infty},\ell
^{1}\rangle_{1/p,p}^{K\blacktriangleleft}}\leq\Vert a\Vert_{(\ell^{\infty
},\ell^{1})_{1/p,p}^{K\blacktriangleleft}}.
\]
Therefore, applying Theorem~\ref{StrExtrAbst} (see Remark \ref{remDiscrete})
gives the following result.


\begin{theorem}
\label{theorPosl} Let $X$ be a symmetric sequence space satisfying
\eqref{interp for seq} and let the operator
$S(\{a_{n}\}):=\{a_{n^{2}}\}$ be bounded on $F$. Then
\[
\Vert a\Vert_{X}\asymp\left\Vert {\{\Vert a\Vert_{\ell^{p(n)}}\}}%
_{n}\right\Vert _{F}\text{,}%
\]
where $p(n)=\frac{2\log(en)}{2\log(en)-1}$, $n\in\mathbb{N}$.
\end{theorem}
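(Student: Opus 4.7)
The plan is to recognize the statement as a direct application of the discrete extrapolation representation in Remark~\ref{remDiscrete}, applied to the pair $\vec{B}=(\ell^{1},\ell^{\infty})$ and the sequence lattice $F$ furnished by the Calder\'on representation \eqref{interp for seq}. All the hypotheses are in place: $\ell^{1}\subset\ell^{\infty}$, so the condition $B_{0}\subset B_{1}$ from Remark~\ref{Zam1a} is met, which (via the argument recalled in Remark~\ref{Zam3}) lets us replace the modified scale $\langle\langle\vec{B}\rangle\rangle_{\eta(n),q}^{K\blacktriangleleft}$ by the classical Lions--Peetre scale $\vec{B}_{\eta(n),q}^{K\blacktriangleleft}$; and the hypothesis that $S\{a_{n}\}=\{a_{n^{2}}\}$ is bounded on $F$ is precisely the discrete form of the boundedness assumption used in Remark~\ref{Zam1a}, with $F$ playing the role of $G_{d}$.

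First I would invoke \eqref{interp for seq} to rewrite $\|a\|_{X}$ as the $F$-norm of $\{K(n,a;\ell^{1},\ell^{\infty})\}_{n}$. Applying the discrete equivalence from Remark~\ref{remDiscrete} to $\vec{B}=(\ell^{1},\ell^{\infty})$ with the choice $q(n):=p(n)$ then yields, with constants depending only on $F$,
\[
\|a\|_{X}\asymp\left\|\bigl\{\|a\|_{(\ell^{1},\ell^{\infty})_{\eta(n),p(n)}^{K\blacktriangleleft}}\bigr\}_{n=1}^{\infty}\right\|_{F},\qquad \eta(n)=\frac{1}{2\log(en)}.
\]
The exponent $p(n)$ appearing in the statement has been tuned so that $1-1/p(n)=\eta(n)$; hence each normalized Lions--Peetre space above can be identified with $\ell^{p(n)}$ via the Hardy/reverse-Hardy bound
\[
\|a\|_{\ell^{p}}\leq\|a\|_{(\ell^{1},\ell^{\infty})_{1-1/p,p}^{K\blacktriangleleft}}\leq e\|a\|_{\ell^{p}},\qquad 1<p<\infty,
\]
quoted just before the theorem. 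Since the constants $1$ and $e$ do not depend on $p$, the termwise substitution is uniform in $n$ and produces the claimed equivalence.

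The only substantive step beyond this direct application is a small bookkeeping check: one must verify that the sequence lattice $F$ coming from \eqref{interp for seq} actually lies in the framework of Remark~\ref{remDiscrete}, i.e.\ satisfies $\ell^{\infty}\subset F\subset\ell^{\infty}(1/n)$. This is automatic from the behaviour of $K(n,a;\ell^{1},\ell^{\infty})$ at the endpoints, which is $O(1)$ on $\ell^{1}$ and $O(n)$ on $\ell^{\infty}$, forcing any non-trivial symmetric interpolation lattice between them to contain $\ell^{\infty}$ and embed into $\ell^{\infty}(1/n)$. I do not anticipate any real obstacle beyond this check: the proof reduces to the two substitutions above together with the uniformity, over $1<p<\infty$, of the Hardy/reverse-Hardy constants.
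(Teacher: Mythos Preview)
Your proposal is correct and follows essentially the same route as the paper: the paper's proof is simply the paragraph preceding the theorem, which combines \eqref{interp for seq}, the Hardy/reverse-Hardy estimate, the uniform comparison between $\langle\langle\ell^{1},\ell^{\infty}\rangle\rangle_{\eta,p}^{K\blacktriangleleft}$ and $(\ell^{1},\ell^{\infty})_{\eta,p}^{K\blacktriangleleft}$ from Remarks~\ref{Rem1} and~\ref{Zam1a}, and then invokes Theorem~\ref{StrExtrAbst} via Remark~\ref{remDiscrete}. Your identification $1-1/p(n)=\eta(n)$ and the uniformity of the constants in the Hardy bound are exactly the points the paper relies on.
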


Now, suppose that $\mathcal{H}$ is a separable complex Hilbert space. Recall
that the \textit{Schatten-von Neumann class} ${{\mathfrak{S}}}^{p}$ consists
of all compact operators $T:\;\mathcal{H}\rightarrow\mathcal{H}$ such that the
norm
\[
\Vert T\Vert_{{{\mathfrak{S}}}^{p}}:=\left(  \sum_{j=1}^{\infty}s_{j}%
(T)^{p}\right)  ^{\frac{1}{p}}<\infty,
\]
where $\{s_{j}(T)\}_{j=1}^{\infty}$ is the non-increasing sequence of
$s$-numbers of $T$ determined by the Schmidt expansion (cf. \cite{GK}). The
Schatten-von Neumann classes belong to the larger family of two-sided
symmetrically normed ideals. Let $\alpha>0$ and let $\mathcal{M}^{\alpha}$ be
the dual ideal to the so-called \textit{Matsaev class}, i.e., the ideal of
bounded compact operators in a Hilbert space $\mathcal{H}$, provided with the
norm
\[
\Vert T\Vert_{\mathcal{M}^{\alpha}}:=\underset{n\in\mathbb{N}}{\sup}\frac
{\sum_{j=1}^{n}s_{j}(T)}{\log^{\alpha}(en)}\text{.}%
\]
Then, it is known that for each $p_{0}>1$ we have
\[
\Vert T\Vert_{\mathcal{M}^{\alpha}}\asymp\underset{1<p<p_{0}}{\sup
}(p-1)^{\alpha}\Vert T\Vert_{p}%
\]
(see \cite[5.2]{M94}). The ideal $\mathcal{M}^{\alpha}$ is a typical limiting
interpolation space with respect to the pair $({{\mathfrak{S}}}^{1}%
,{{\mathfrak{S}}}^{\infty})$, where by ${{\mathfrak{S}}}^{\infty}$ we denote
the class of all compact operators on $\mathcal{H}$ with the usual operator
norm. Theorem~\ref{theorPosl} allows to easily get the following general
extrapolation description of the limiting interpolation spaces with respect to
the pair $({{\mathfrak{S}}}^{1},{{\mathfrak{S}}}^{\infty})$.

\begin{theorem}
\label{Ideals} Let $\mathcal{X}$ be a symmetrically normed ideal of compact
operators on a Hilbert space $\mathcal{H}$, and let
\[
\Vert T\Vert_{\mathcal{X}}\asymp\Biggl\Vert\Bigr\{\sum_{j=1}^{n}%
s_{j}(T)\Bigl\}_{n}\Biggr\Vert_{F}\text{,}%
\]
where $F$ is a sequence Banach lattice such that the operator $S(\{a_{n}%
\}):=\{a_{n^{2}}\}$ is bounded on $F$.

Then
\[
\Vert T\Vert_{\mathcal{X}} \asymp\left\Vert {\{\Vert T\Vert_{p (n)}\}}%
_{n}\right\Vert _{F}\text{,}%
\]
where $p (n) =\frac{2\log(en)}{2\log(en) -1}$, $n\in\mathbb{N}$,
and $\Vert T\Vert_{p (n)}$ are the corresponding Schatten-von Neumann norms of
$T$.
\end{theorem}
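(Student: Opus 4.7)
The plan is to deduce this theorem as a direct consequence of Theorem~\ref{theorPosl} applied to the sequence $\{s_j(T)\}_{j=1}^\infty$ of $s$-numbers of $T$. The idea is that the correspondence $T\mapsto \{s_j(T)\}$ transfers the interpolation structure on the pair $(\mathfrak{S}^1,\mathfrak{S}^\infty)$ to the pair $(\ell^1,\ell^\infty)$, and the hypothesis on $\mathcal{X}$ is precisely the sequence-space incarnation of the hypothesis of Theorem~\ref{theorPosl}.

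First I would set $a_T:=\{s_j(T)\}_{j=1}^\infty$. Since $\{s_j(T)\}$ is by definition already non-increasing, $a_T^\ast=a_T$, and therefore
\[
K(n,a_T;\ell^1,\ell^\infty)=\sum_{j=1}^n s_j(T),\qquad n\in\mathbb{N}.
\]
Moreover, $\|a_T\|_{\ell^p}=\|T\|_{\mathfrak{S}^p}=\|T\|_p$ for every $1\le p\le\infty$. Next I would introduce the symmetric sequence space
\[
X:=\bigl\{a=\{a_k\}\in\ell^\infty:\Vert a\Vert_X:=\bigl\Vert\{K(n,a;\ell^1,\ell^\infty)\}_n\bigr\Vert_F<\infty\bigr\},
\]
which satisfies \eqref{interp for seq} by construction. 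The hypothesis on $\mathcal{X}$ then reads exactly $\Vert T\Vert_{\mathcal{X}}\asymp\Vert a_T\Vert_X$.

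Having reduced matters to a statement about the sequence space $X$, I would now invoke Theorem~\ref{theorPosl}: since the operator $S(\{a_n\})=\{a_{n^2}\}$ is bounded on $F$ by assumption, we obtain
\[
\Vert a\Vert_X\asymp\bigl\Vert\{\Vert a\Vert_{\ell^{p(n)}}\}_n\bigr\Vert_F,\qquad p(n)=\tfrac{2\log(en)}{2\log(en)-1},
\]
uniformly in $a\in X$. Specializing to $a=a_T$ and combining with $\Vert a_T\Vert_{\ell^{p(n)}}=\Vert T\Vert_{p(n)}$ and $\Vert T\Vert_{\mathcal{X}}\asymp\Vert a_T\Vert_X$, the desired equivalence
\[
\Vert T\Vert_{\mathcal{X}}\asymp\bigl\Vert\{\Vert T\Vert_{p(n)}\}_n\bigr\Vert_F
\]
follows.

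There is essentially no obstacle: the entire content is already packaged in Theorem~\ref{theorPosl} (which was itself obtained from Theorem~\ref{StrExtrAbst} via Remark~\ref{remDiscrete}, and used the estimate on $\Vert a\Vert_{(\ell^\infty,\ell^1)_{1/p,p}^{K\blacktriangleleft}}$ derived from Hardy and reverse Hardy inequalities). The only thing worth noting is that while Theorem~\ref{theorPosl} is stated for symmetric sequence spaces of scalar sequences, the transfer to operator ideals is automatic because the norms on both sides of the asserted equivalence depend on $T$ only through $\{s_j(T)\}$. In particular, taking $F=\ell^\infty(1/\log^\alpha(en))$ for $\alpha>0$ recovers the known description of the dual Matsaev class $\mathcal{M}^\alpha$ alluded to in the text, and this will be exploited in the subsequent Theorem~\ref{Matcaev}.
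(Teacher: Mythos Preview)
Your proposal is correct and follows essentially the same approach as the paper: apply Theorem~\ref{theorPosl} to the sequence $a_T=\{s_j(T)\}$ of $s$-numbers, using that $\Vert a_T\Vert_{\ell^{p(n)}}=\Vert T\Vert_{p(n)}$. The paper's proof is in fact even terser than yours, compressing the argument into a single line.
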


\begin{proof}
Applying Theorem~\ref{theorPosl} to the equivalent expression of the ideal
norm of $T$ in $\mathcal{X}$ via the sequence of $s$-numbers of $T$, we have
\[
\Vert T\Vert_{\mathcal{X}}\asymp\left\Vert {\{\Vert\{s_{j}(T)\} \Vert
_{\ell^{p(n)}}\}}_{n}\right\Vert _{F}=\left\Vert {\{\Vert T\Vert_{{p(n)}}%
\}}_{n}\right\Vert _{F}\text{.}%
\]

\end{proof}

As an application of Theorem~\ref{Ideals} we present a result on the
boundedness, in some appropriate symmetrically normed ideals, of the mapping
which transfers the imaginary component of a Volterra operator into its real
component. Recall that a Volterra operator is a compact operator whose
spectrum coincides with the one-point set $\{0\}$. Denote by $T_{\mathcal{R}}$
and $T_{\mathcal{J}}$ the real and the imaginary components of an operator
$T$, respectively, i.e.,
\[
T_{\mathcal{R}}:=\frac{1}{2}(T+T^{\ast})\text{\quad}\text{and}\text{\quad
}T_{\mathcal{J}}:=\frac{1}{2i}(T-T^{\ast})\text{.}%
\]
By a well-known result of Matsaev (cf. \cite[Theorem~III.6.2]{GK1}), if $T$ is
a Volterra operator, then for each $1<p<\infty,$ we have that $T_{\mathcal{J}%
}\in{{\mathfrak{S}}}^{p}$ implies that $T_{\mathcal{R}}\in{{\mathfrak{S}}}%
^{p}$. In addition, by \cite[Theorem~III.6.3]{GK1},
\begin{equation}
\Vert T_{\mathcal{R}}\Vert_{p}\leq\max\left\{  \frac{p}{p-1},\;p\right\}
\cdot\Vert T_{\mathcal{J}}\Vert_{p},\text{\quad}1<p<\infty\text{.}
\label{MatcaevNer}%
\end{equation}
Moreover, if $T_{\mathcal{J}}\in{{\mathfrak{S}}}^{1}$ then $T_{\mathcal{R}}%
\in\mathcal{M}^{1}$ \cite[Theorem~III.2.1]{GK1}. Combining the latter
inequality with Theorem~\ref{Ideals}, we can obtain some information related
to the behavior of both components of a Volterra operator in symmetrically
normed ideals "close" to the ideal ${{\mathfrak{S}}}^{1}$.

\begin{theorem}
\label{Matcaev} Suppose that a symmetrically normed ideal $\mathcal{X}$
satisfies the conditions of Theorem~ \ref{Ideals} and $T$ is a Volterra
operator such that $T_{\mathcal{J}}\in\mathcal{X}$. Then $T_{\mathcal{R}}%
\in\mathcal{X}(\log^{-1})$, where the ideal $\mathcal{X}(\log^{-1})$ consists
of all compact operators $U$ on $\mathcal{H}$ that satisfy
\[
\Vert U\Vert_{\mathcal{X}(\log^{-1})}:=\Biggl\Vert\Bigl\{\frac{1}{\log
(en)}\sum_{j=1}^{n}s_{j}(U)\Bigr\}_{n}\Biggr\Vert _{F}<\infty\text{.}%
\]

\end{theorem}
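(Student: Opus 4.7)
The plan is to combine the extrapolation representation of the ideal norm provided by Theorem~\ref{Ideals} with the pointwise Matsaev inequality~\eqref{MatcaevNer}, applied along the critical scale $p(n) = \frac{2\log(en)}{2\log(en)-1}$ that appears in that theorem.

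First I would compute the constant on the right-hand side of~\eqref{MatcaevNer} along this scale. Since $p(n) - 1 = 1/(2\log(en)-1)$, we have $p(n)/(p(n)-1) = 2\log(en)$, while $p(n) \leq 2 \leq 2\log(en)$ for every $n \in \mathbb{N}$. Hence $\max\{p(n)/(p(n)-1),\,p(n)\} = 2\log(en)$, and applying~\eqref{MatcaevNer} at $p = p(n)$ gives the uniform pointwise bound
\[
\Vert T_{\mathcal{R}}\Vert_{p(n)} \leq 2\log(en)\,\Vert T_{\mathcal{J}}\Vert_{p(n)}, \qquad n \in \mathbb{N}.
\]

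The next step is to recognize $\mathcal{X}(\log^{-1})$ itself as an ideal of the type handled by Theorem~\ref{Ideals}. Let $\tilde F$ be the Banach sequence lattice defined by $\Vert \{a_n\}\Vert_{\tilde F} := \Vert \{a_n/\log(en)\}\Vert_F$; then by construction
\[
\Vert U\Vert_{\mathcal{X}(\log^{-1})} = \Big\Vert \Big\{\sum_{j=1}^{n} s_j(U)\Big\}_n \Big\Vert_{\tilde F}.
\]
The main technical point is that the discrete operator $S(\{a_n\}) = \{a_{n^2}\}$ remains bounded on $\tilde F$. This reduces to the elementary inequality $\log(en^2) \leq 2\log(en)$: setting $b_n := a_n/\log(en)$ one has $(Sa)_n/\log(en) = b_{n^2}\cdot\log(en^2)/\log(en) \leq 2(Sb)_n$, whence $\Vert Sa\Vert_{\tilde F} \leq 2\Vert S\Vert_{F \to F}\Vert a\Vert_{\tilde F}$.

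Once this is verified, Theorem~\ref{Ideals} applies to both $\mathcal{X}$ and $\mathcal{X}(\log^{-1})$, yielding
\[
\Vert T_{\mathcal{J}}\Vert_{\mathcal{X}} \asymp \big\Vert \{\Vert T_{\mathcal{J}}\Vert_{p(n)}\}_n \big\Vert_F \quad \text{and} \quad \Vert T_{\mathcal{R}}\Vert_{\mathcal{X}(\log^{-1})} \asymp \big\Vert \{\Vert T_{\mathcal{R}}\Vert_{p(n)}/\log(en)\}_n\big\Vert_F.
\]
Combining these two equivalences with the pointwise Matsaev bound from the first step, the $\log(en)$ factors cancel and we obtain $\Vert T_{\mathcal{R}}\Vert_{\mathcal{X}(\log^{-1})} \preceq \Vert T_{\mathcal{J}}\Vert_{\mathcal{X}} < \infty$, which is the desired conclusion. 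The only nontrivial step is the $S$-invariance of $\tilde F$, handled above by a short calculation; morally, the logarithmic loss in Matsaev's inequality at $p = p(n)$ is exactly absorbed by the weight $1/\log(en)$ built into the definition of $\mathcal{X}(\log^{-1})$.
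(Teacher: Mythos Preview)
Your proof is correct and follows essentially the same route as the paper: verify that $S$ remains bounded on the weighted lattice $\tilde F$ (via $\log(en^{2})\le 2\log(en)$), apply Theorem~\ref{Ideals} to both $\mathcal{X}$ and $\mathcal{X}(\log^{-1})$, and then use Matsaev's inequality~\eqref{MatcaevNer} along the scale $p(n)$, where the factor $p(n)/(p(n)-1)=2\log(en)$ is exactly cancelled by the weight. The only difference is the order of exposition; the paper postpones the Matsaev estimate to the final step, whereas you front-load it, but the content is identical.
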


\begin{proof}
First, we observe that the boundedness of the operator $S(\{a_{n}%
\})=\{a_{n^{2}}\}$ on $F$ implies that $S$ is also bounded on the Banach
lattice of all sequences $\{a_{n}\}_{n=1}^{\infty}$ such that
\[
\biggl\Vert\Bigl\{\frac{a_{n}}{\log(en)}\Bigr\}_{n}\biggr\Vert_{F}%
<\infty\text{.}%
\]
Indeed,%

\begin{align*}
\biggl\Vert\Bigl\{\frac{a_{n^{2}}}{\log(en)}\Bigr\}_{n}\biggr\Vert_{F}  &
=2\biggl\Vert\Bigl\{\frac{a_{n^{2}}}{\log((en)^{2})}\Bigr\}_{n}\biggr\Vert_{F}%
\leq2{\left\|  \left\{  \frac{a_{n^{2}}}{\log(en^{2})}\right\}  _{n} \right\|
}_{F}\\
&  =2{\left\|  S\left(  \left\{  \frac{a_{n}}{\log(en)}\right\}  _{n} \right)
\right\|  }_{F} \leq C{\left\|  \left\{  \frac{a_{n}}{\log(en)}\right\}  _{n}
\right\|  }_{F}.
\end{align*}

Therefore, applying Theorem~\ref{Ideals} to the ideals $\mathcal{X}$ and
$\mathcal{X}(\log^{-1})$, we obtain
\[
\Vert T\Vert_{\mathcal{X}}\asymp\left\Vert {\{\Vert T\Vert_{{p(n)}}\}}%
_{n}\right\Vert _{F}\text{,}%
\]
and
\[
\Vert T\Vert_{\mathcal{X}(\log^{-1})}\asymp\left\Vert \Bigl\{\frac{1}%
{\log(en)}\Vert T\Vert_{{p(n)}}\Bigr\}_{n}\right\Vert _{F}\text{,}%
\]
where, as above, $p(n)=\frac{2\log(en)}{2\log(en)-1}$, $n\in\mathbb{N}$.
Finally, from the hypothesis $T_{\mathcal{J}}\in\mathcal{X}$ and inequality
\eqref{MatcaevNer} it follows that
\begin{align*}
{\Vert T_{\mathcal{R}}\Vert}_{\mathcal{X}(\log^{-1})}  &  \leq C{\left\Vert
\Bigl\{\frac{1}{\log(en)}{\Vert T_{\mathcal{R}}\Vert}_{p(n)}\Bigr\}_{n}%
\right\Vert }_{F}\\
&  \leq C{\left\Vert \Bigl\{\frac{1}{\log(en)}\cdot\frac{p(n)}{p(n)-1}%
\cdot{\Vert T_{\mathcal{J}}\Vert}_{p(n)}\Bigr\}_{n}\right\Vert }_{F}\\
&  =2C{\left\Vert \bigl\{{\Vert T_{\mathcal{J}}\Vert}_{p(n)}\bigr\}_{n}%
\right\Vert }_{F}\leq C^{\prime}{\Vert T_{\mathcal{J}}\Vert}_{\mathcal{X}}.
\end{align*}


\end{proof}

\section{Grand Lebesgue spaces via extrapolation\label{sec:Grand}}

{Let $1<p<\infty$.} The \textit{Grand Lebesgue $L^{p)}$ space} introduced by
T.~Iwaniec and C.~Sbordone \cite{IwSb}, consists of all measurable functions
$f$ on $[0,1]$ such that
\begin{equation}
{\Vert f\Vert}_{L^{p)}}:=\sup_{0<\varepsilon<p-1}\varepsilon^{\frac
{1}{p-\varepsilon}}{\Vert f\Vert}_{L^{p-\varepsilon}}<\infty. \label{spre}%
\end{equation}
These spaces have found many applications in analysis, including the study of
maximal operators, PDEs, interpolation theory, etc (see \cite{FFG,JSS} and the
references therein). On the other hand, the expression (\ref{spre}) is
somewhat difficult to work with. In this context, an important result of
Fiorenza-Karadzhov \cite[Theorem 4.2]{FiorKar} gives a concrete description of
the Grand Lebesgue spaces $L^{p)}$. The proof in \cite[Theorem 4.2]{FiorKar}
is based on extrapolation methods. In this section we give a simpler proof
through the use of Theorem~\ref{StrExtrAbst} combined with elementary
inequalities for rearrangements of functions.

\begin{theorem}
\cite[Theorem 4.2]{FiorKar} \label{Fiorenza-Karadzhov} Let $1<p<\infty$. Then,
with universal constants of equivalence
\[
{\Vert f\Vert}_{L^{p)}}\asymp\sup_{0<t<1}(\log(e/t))^{-\frac{1}{p}}\left(
\int\limits_{t}^{1}f^{\ast}(s)^{p}\,ds\right)  ^{\frac{1}{p}}.
\]

\end{theorem}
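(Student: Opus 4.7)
Writing $R(f)$ for the right-hand side, since $\varepsilon^{1/(p-\varepsilon)} \asymp \varepsilon^{1/p}$ uniformly for $\varepsilon \in (0, p-1)$, it suffices to show
$$\sup_{0<\varepsilon<p-1}\varepsilon^{1/p}\Vert f\Vert_{L^{p-\varepsilon}} \asymp R(f).$$
Both inequalities will be established by elementary rearrangement arguments built on the monotonicity of $f^*$, Chebyshev's inequality, and a H\"older-based dyadic decomposition. The scaffolding is the one provided by Theorem~\ref{StrExtrAbst} (together with Remark~\ref{Zam3}), which represents interpolation spaces for $(L^1[0,1],L^\infty[0,1])$ as extrapolation spaces over the Lebesgue scale; on this calibration, the natural parameter $\varepsilon = \varepsilon(t)$ linking $t \in (0,1)$ and $p-\varepsilon$ is logarithmic, namely $\varepsilon \asymp 1/\log(e/t)$.

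For $R(f) \preceq \Vert f\Vert_{L^{p)}}$, fix $t \in (0,1)$ and $\varepsilon \in (0, p-1)$. Monotonicity of $f^*$ gives $f^*(s)^\varepsilon \leq f^*(t)^\varepsilon$ for $s \geq t$, so
$$\int_t^1 f^*(s)^p\,ds \leq f^*(t)^\varepsilon \Vert f\Vert_{L^{p-\varepsilon}}^{p-\varepsilon},$$
and the Chebyshev bound $f^*(t) \leq t^{-1/(p-\varepsilon)}\Vert f\Vert_{L^{p-\varepsilon}}$ upgrades this to
$$\int_t^1 f^*(s)^p\,ds \leq t^{-\varepsilon/(p-\varepsilon)}\Vert f\Vert_{L^{p-\varepsilon}}^p.$$
I then balance the factors by choosing $\varepsilon = \varepsilon(t) := p/\log(e/t)$ for all $t$ small enough to have $\varepsilon(t) < (p-1)/2$: such $t$ satisfy $t^{-\varepsilon} \leq e^p$, while $\varepsilon^{1/p}$ is comparable to $(\log(e/t))^{-1/p}$, yielding the desired pointwise bound. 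The residual range of $t$ bounded away from $0$ is trivial, using any fixed $\varepsilon_0 \in (0, p-1)$ and the decrease of $f^*$.

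For the reverse direction, fix $\varepsilon \in (0, p-1)$ and dyadically partition $(0, 1]$ using $\tau_k := e^{-kp/\varepsilon}$, $k \geq 0$, so that $\tau_0 = 1$. Applying H\"older with exponents $p/(p-\varepsilon)$ and $p/\varepsilon$ on each piece,
$$\int_{\tau_{k+1}}^{\tau_k} f^*(s)^{p-\varepsilon}\,ds \leq (\tau_k-\tau_{k+1})^{\varepsilon/p}\left(\int_{\tau_{k+1}}^{\tau_k} f^*(s)^p\,ds\right)^{(p-\varepsilon)/p}.$$
Bounding $(\tau_k-\tau_{k+1})^{\varepsilon/p} \leq \tau_k^{\varepsilon/p} = e^{-k}$, and, via the definition of $R(f)$, $\int_{\tau_{k+1}}^{\tau_k} f^*(s)^p\,ds \leq \log(e/\tau_{k+1}) R(f)^p = (1+(k+1)p/\varepsilon)R(f)^p$, summation over $k$ produces a convergent series in which the geometric factor $e^{-k}$ dominates the polynomial $(k+2)^{(p-\varepsilon)/p}$; the resulting estimate
$$\Vert f\Vert_{L^{p-\varepsilon}}^{p-\varepsilon} \leq C_p (p/\varepsilon)^{(p-\varepsilon)/p} R(f)^{p-\varepsilon}$$
yields $\varepsilon^{1/p}\Vert f\Vert_{L^{p-\varepsilon}} \preceq R(f)$ uniformly in $\varepsilon$. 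The main obstacle is the calibration of these two scales: the choice $\varepsilon = p/\log(e/t)$ in the upper bound and the dyadic spacing $\tau_k = e^{-kp/\varepsilon}$ in the lower bound are both dictated by the logarithmic weight $(\log(e/t))^{-1/p}$ in $R(f)$, and together encode the extrapolation philosophy of Theorem~\ref{StrExtrAbst} in the case of the Lebesgue scale.
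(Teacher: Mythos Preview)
Your proof is correct and self-contained: the upper bound via monotonicity and Chebyshev with the logarithmic calibration $\varepsilon(t)=p/\log(e/t)$, and the lower bound via the dyadic H\"older decomposition at scale $\tau_k=e^{-kp/\varepsilon}$, both go through cleanly (the series $\sum_k e^{-k}(k+2)^{(p-\varepsilon)/p}$ is dominated by the universal constant $\sum_k e^{-k}(k+2)$, so the final bound is uniform in $\varepsilon$).

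However, your route is genuinely different from the paper's. The paper does not prove the equivalence directly; instead it applies Theorem~\ref{StrExtrAbst} to the pair $(L^1,L^p)$ with the lattice $F=L^\infty(t^{-1}\log^{-1/p}(e/t))$ to identify $\|f\|_{L^{p)}}\asymp\|f\|_{\langle L^1,L^p\rangle_F^K}$, and then computes the latter via Holmstedt's formula for $K(t,f;L^1,L^p)$, showing that the term $\sup_t \frac{1}{t\log^{1/p}(e/t)}\int_0^{t^{p/(p-1)}}f^*(s)\,ds$ can be absorbed into the term $\sup_t \log^{-1/p}(e/t)(\int_{t^{p/(p-1)}}^1 f^*(s)^p\,ds)^{1/p}$. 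In other words, the paper's point is precisely to exhibit this result as a corollary of the extrapolation machinery developed in Section~\ref{sec:main}, whereas you give an elementary direct argument that bypasses both Theorem~\ref{StrExtrAbst} and Holmstedt's formula entirely. Your invocation of Theorem~\ref{StrExtrAbst} is purely heuristic (to motivate the logarithmic calibration); the actual logical content of your proof does not use it. Your approach is shorter and more portable; the paper's approach illustrates the unifying power of the extrapolation framework, which is the declared purpose of the section.
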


\begin{proof}
Let $F$ be the Banach lattice on $[0,1]$ equipped with the norm
\[
{\Vert f\Vert}_{F}:=\sup_{0<s\leq1}\frac{|f(s)|}{s\log^{1/p}(e/s)}.
\]
Clearly, the operator $Tf(s)=f(s^{2})/s$ is bounded on $F$. Consequently, for
every ordered pair $(A_{0},A_{1}),$ and for each continuous function
$q:\,(0,1]\rightarrow\lbrack1,\infty],$ we have, by Theorem~\ref{StrExtrAbst}
(see also Remark~\ref{Zam1}),
\begin{equation}
\Vert a\Vert_{{\langle\vec{A}\rangle_{F}^{K}}}\asymp\left\Vert t\cdot\Vert
a\Vert_{\langle\vec{A}\rangle_{\theta(t),q(t)}^{K\blacktriangleleft}%
}\right\Vert _{F}, \label{EQ10}%
\end{equation}
where $\theta(t)=1-\frac{1}{2\log(e/t)}$. Let $\vec{A}$ be the pair $({L}%
^{1},{L}^{p})$ and let
\begin{equation}
\theta=1-\frac{\varepsilon}{(p-1)(p-\varepsilon)}. \label{EQ11}%
\end{equation}
By Holmstedt's formula (cf. \cite[Theorem~4.3]{Holm}) we get
\[
{\langle{L}^{1},{L}^{p}\rangle_{\theta,p-\varepsilon}^{K\blacktriangleleft}%
}={L}^{p-\varepsilon},
\]
where the equivalence constants are independent of $\varepsilon\in
(0,\varepsilon_{0})$, $\varepsilon_{0}=\frac{p(p-1)}{p+1}<p-1$. Consider the
interpolation space ${\langle L}^{1},{L}^{p}{\rangle}_{F}^{K}$. From
\eqref{EQ10} and \eqref{EQ11} it follows that, with constants that depend only
on $p$, we have
\begin{align}
{\Vert f\Vert}_{\langle L^{1},L^{p}\rangle_{F}^{K}}  &  \asymp\sup_{0<t\leq
1}\frac{\Vert f\Vert_{\langle{L}^{1},{L}^{p}\rangle_{\theta(t),p-\varepsilon
(t)}^{K\blacktriangleleft}}}{\log^{1/p}(e/t)}\asymp\sup_{0<t\leq1}%
(1-\theta(t))^{\frac{1}{p}}{\Vert f\Vert}_{L^{p-\varepsilon(t)}}\nonumber\\
&  \asymp\sup_{0<\varepsilon\leq\varepsilon_{0}}\varepsilon^{\frac
{1}{p-\varepsilon}}{\Vert f\Vert}_{L^{p-\varepsilon}}\asymp{\Vert f\Vert
}_{L^{p)}}. \label{grand Leb}%
\end{align}

On the other hand, using the quasi-concavity of the $K$-functional and
Holmstedt's formula \cite[Theorem~4.1]{Holm}, we have
\begin{align*}
{\Vert f\Vert}_{{\langle L}^{1},{L}^{p}{\rangle}_{F}^{K}}  &  \asymp
\sup_{0<t\leq1}\frac{K(t/2,f;{L}^{1},{L}^{p})}{t\log^{1/p}(e/t)}\asymp
\sup_{0<t\leq1/2}\frac{K(t,f;{L}^{1},{L}^{p})}{t\log^{1/p}(e/t)}\\
&  \asymp\sup_{0<t\leq1/2}\frac{\int\limits_{0}^{t^{\frac{p}{p-1}}}f^{\ast
}(s)\,ds}{t\log^{1/p}(e/t)}+\sup_{0<t\leq1/2}\frac{\Bigg(\int\limits_{t^{\frac
{p}{p-1}}}^{1}f^{\ast}(s)^{p}\,ds\Bigg)^{\frac{1}{p}}}{\log^{1/p}(e/t)}.
\end{align*}
We now show that the first term on the right-hand side can be estimated by the
second term (see also \cite[Lemma 3.1]{FFGKR} with another proof). For this
purpose we let%
\[
A:=\sup_{0<t\leq1/2}\frac{1}{t\log^{1/p}(e/t)}\int\limits_{0}^{t^{\frac
{p}{p-1}}}f^{\ast}(s)\,ds.
\]
Further, we choose $M>0$ so large that $M>(M+1)2^{1/p-1},$ and pick $t_{0}%
\in(0,1/2]$ such that
\[
\frac{1}{t_{0}\log^{1/p}(e/t_{0})}\int\limits_{0}^{t_{0}^{\frac{p}{p-1}}%
}f^{\ast}(s)\,ds>\frac{M+1}{2^{1-1/p}M}\cdot A.
\]
Then, we have
\begin{equation}
\int\limits_{0}^{t_{0}^{\frac{2p}{p-1}}}f^{\ast}(s)\,ds\leq M\int%
\limits_{t_{0}^{\frac{2p}{p-1}}}^{t_{0}^{\frac{p}{p-1}}}f^{\ast}(s)\,ds.
\label{EQ12}%
\end{equation}
Indeed, if (\ref{EQ12}) does not hold, then in particular
\[
\int\limits_{0}^{t_{0}^{\frac{2p}{p-1}}}f^{\ast}(s)\,ds\geq\frac{M}{M+1}%
\int\limits_{0}^{t_{0}^{\frac{p}{p-1}}}f^{\ast}(s)\,ds,
\]
and since $t_{0}\leq1/2$ we see that%
\[
\frac{1}{t_{0}^{2}\log^{1/p}(e/t_{0}^{2})}\int\limits_{0}^{t_{0}^{\frac
{2p}{p-1}}}f^{\ast}(s)\,ds\geq\frac{2^{1-1/p}M}{(M+1)t_{0}\log^{1/p}(e/t_{0}%
)}\int\limits_{0}^{t_{0}^{\frac{p}{p-1}}}f^{\ast}(s)\,ds>A,
\]
which is a contradiction.

From \eqref{EQ12} and H\"{o}lder's inequality we get
\begin{align*}
A  &  <\frac{C_{p}}{t_{0}\log^{1/p}(e/t_{0})}\int\limits_{t_{0}^{\frac
{2p}{p-1}}}^{t_{0}^{\frac{p}{p-1}}}f^{\ast}(s)\,ds\leq\frac{C_{p}}{\log
^{1/p}(e/t_{0})}\Big(\int\limits_{t_{0}^{\frac{2p}{p-1}}}^{1}f^{\ast}%
(s)^{p}\,ds\Big)^{1/p}\\
&  \leq C_{p}\sup_{0<t\leq1/2}\frac{1}{{\log^{1/p}(e/t)}}{\Big(\int%
\limits_{t^{\frac{p}{p-1}}}^{1}f^{\ast}(s)^{p}\,ds\Big)^{\frac{1}{p}}},
\end{align*}
where $C_{p}:=2^{1-1/p}M$. Thus,
\begin{align*}
{\Vert f\Vert}_{{\langle L}^{1},{L}^{p}{\rangle}_{F}^{K}}  &  \asymp
\sup_{0<t\leq1/2}\frac{1}{{\log^{1/p}(e/t)}}{\Big(\int\limits_{t^{\frac
{p}{p-1}}}^{1}f^{\ast}(s)^{p}\,ds\Big)^{\frac{1}{p}}}\\
&  \asymp\sup_{0<u\leq1}\frac{1}{{\log^{1/p}(e/u)}}{\Big(\int\limits_{u}%
^{1}f^{\ast}(s)^{p}\,ds\Big)^{\frac{1}{p}}}.
\end{align*}
Combining the last equivalence with ~\eqref{grand Leb}, we arrive at desired result.
\end{proof}

\begin{remark}
It is interesting to note that the approach developed in this paper allows to
get a streamlined proof of other results related to Grand and small Lebesgue
spaces. In particular, the interpolation description of the spaces
$L^{p),\alpha}$, $\alpha>0$, endowed with the norms \footnote{See \cite{GIS}
for the original definition of the spaces $L^{p),\alpha}$ and \cite{DFF} for
the proof of this equivalence. It can be proved also similarly as in the proof
of Theorem \ref{Fiorenza-Karadzhov}.}
\[
{\Vert x\Vert}_{L^{p),\alpha}}:=\sup_{0<\varepsilon<p-1}\varepsilon
^{\frac{\alpha}{p-\varepsilon}}{\Vert x\Vert}_{p-\varepsilon}\asymp
\sup_{0<t<1}\log^{-\alpha/p}(e/t)\left(  \int_{t}^{1}(x^{\ast}(s))^{p}%
\,ds\right)  ^{\frac{1}{p}},
\]
which was obtained in \cite[Theorem 1.1 and Theorem 3.1]{FFGKR}, is an
immediate consequence of Theorems \ref{StrExtrAbst} and \ref{TH2}. Indeed,
proceeding as in \eqref{grand Leb}, we obtain (see also \cite[Lemma
3.2]{FFGKR}) that
\[
L^{p),\alpha}=\langle L^{1},L^{p}\rangle_{F}^{K}%
\]
where the lattice $F:=L^{\infty}(t^{-1}\log^{-\alpha/p}(e/t))$ satisfies the
condition (i) of Theorem \ref{TH2}. Then, from Theorem \ref{TH2},(iii), it
follows that for arbitrary $r\in\lbrack1,p)$ it holds (a result first derived
in \cite[Theorem 3.1]{FFGKR})
\[
L^{p),\alpha}=\langle L^{r},L^{p}\rangle_{F}^{K}.
\]
Furthermore, using the embeddings $L^{1}\supset L^{r),\alpha}\supset L^{s}$,
$1<r<s<p$, we have (cf. \cite[Theorem 1.1]{FFGKR})
\[
L^{p),\alpha}=\langle L^{r),\alpha},L^{p}\rangle_{F}^{K}.
\]

\end{remark}

\section{Lions-Peetre-Pisier formula and vector valued Yano's
theorem\label{sec:vector}}

In this section we consider the problem of proving a vector valued version of
Yano's extrapolation theorem. Our development will be based on Pisier's
approach to the following well-known result due to Lions-Peetre (cf.
\cite{Cw}, \cite{PI} and the references therein) . Let $(\Omega,\mu)$ be a
measure space and let $\vec{A}=(A_{0},A_{1})$ be a Banach pair, then%
\begin{equation}
(L^{1}(\Omega,A_{0}),L^{\infty}(\Omega,A_{1}))_{\theta,p_{\theta}}%
^{K}=L^{p_{\theta}}(\Omega,(A_{0},A_{1})^{K}_{\theta,p_{\theta}}),\text{ with
}0<\theta<1,\frac{1}{p_{\theta}}=1-\theta. \label{pi1}%
\end{equation}

To simplify the discussion in this section we shall further assume that
$A_{0}\cap A_{1}$ is dense in $A_{0}$; then we can write (cf.
\cite[Proposition 5.1.15, page 303]{bs})%
\begin{equation}
K(t,g;\vec{A})=\int_{0}^{t}k(s,g;\vec{A})ds, \label{3}%
\end{equation}
where $k(s,g;\vec{A})$ is \textbf{decreasing}. As is well known \textbf{(}cf.
\cite[Example 7]{M-03} for the scalar case\footnote{Actually letting
$f=k(s,g;\vec{A}),$ it follows that $f^{\ast\ast}(t)=\frac{1}{t}\int_{0}%
^{t}k(u,g;\vec{A})du$ and one can apply the argument in \cite[Example 7]{M-03}
verbatim.}) from (\ref{3}) and reverse Hardy inequalities we readily
get\footnote{cf. (\ref{constant}) above for the definition of $c_{\theta
,p_{\theta}}.$} that for $\theta\in(0,1),\frac{1}{p_{\theta}}=1-\theta,$ and
all $g\in\vec{A}_{\theta,p_{\theta}}^{K\blacktriangleleft},$%
\begin{align}
\left\Vert f\right\Vert _{\vec{A}_{\theta,p_{\theta}}^{K\blacktriangleleft}}
&  =c_{\theta,p_{\theta}}\left\{  \int_{0}^{\infty}\left(  s^{-\theta
}K(s,f;\vec{A})\right)  ^{p_{\theta}}\frac{ds}{s}\right\}  ^{1/p_{\theta}%
}\nonumber\\
&  \asymp\left\{  \int_{0}^{\infty}\left(  s^{1-\theta}k(s,f;\vec{A})\right)
^{p_{\theta}}\frac{ds}{s}\right\}  ^{1/p_{\theta}}, \label{sete}%
\end{align}
where the universal constants of equivalence on the right-hand side are
independent of $\theta.$ As a consequence,%
\begin{equation}
\left\Vert f\right\Vert _{L^{p_{\theta}}(\Omega,\left(  A_{0},A_{1}\right)
_{\theta,p_{\theta}}^{K\blacktriangleleft})}\asymp\left\{  \int_{\Omega
}\left\{  \int_{0}^{\infty}\left(  s^{1-\theta}k(s,f(w);\vec{A})\right)
^{p_{\theta}}\frac{ds}{s}\right\}  d\mu(w)\right\}  ^{1/p_{\theta}}.
\label{lpv}%
\end{equation}
Pisier's method makes it possible to relate (\ref{lpv}) to the $K-$functional
for the pair $(L^{1}(\Omega,A_{0}),L^{\infty}(\Omega,A_{1})),$ and crucially
for our purposes, allows us to keep track of the constants in the intervening
inequalities. Pisier's formula for the $K-$functional for the pair
$(L^{1}(\Omega,A_{0}),L^{\infty}(\Omega,A_{1}))$ is given by%
\begin{equation}
K(t,f;L^{1}(\Omega,A_{0}),L^{\infty}(\Omega,A_{1}))=\sup_{\phi\geq0,\left\Vert
\phi\right\Vert _{L^{1}}\leq t}\int_{\Omega}K(\phi(w),f(w);\vec{A})d\mu(w).
\label{1}%
\end{equation}
Given $f\in L^{1}(\Omega,A_{0})+L^{\infty}(\Omega,A_{1}),$ we let $\Psi
_{f}:\Omega\times(0,\infty)\rightarrow R_{+},$ be defined by $\Psi
_{f}(w,s)=k(s,f(w);\vec{A}),$ $(w,s)\in\Omega\times(0,\infty).$ Pisier's
method is based on reinterpreting formula (\ref{1}) as follows

\begin{proposition}
(Pisier \cite{PI})
\end{proposition}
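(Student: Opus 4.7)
The proposition (whose exact statement is truncated in the excerpt) clearly asserts Pisier's rearrangement identification, namely that
\[
K(t,f;L^{1}(\Omega,A_{0}),L^{\infty}(\Omega,A_{1})) \;=\; \int_{0}^{t}\Psi_{f}^{\ast}(s)\,ds,
\]
where $\Psi_{f}^{\ast}$ denotes the decreasing rearrangement of $\Psi_{f}$ on the product space $(\Omega\times(0,\infty),d\mu\times ds)$. Equivalently, the function $k(\cdot,f;L^{1}(\Omega,A_{0}),L^{\infty}(\Omega,A_{1}))$ coincides with $\Psi_{f}^{\ast}$. My plan is to derive this from Pisier's formula (\ref{1}) by recasting the right-hand supremum as a classical Hardy--Littlewood extremal problem for $\Psi_{f}$ against characteristic functions of sets of bounded measure in $\Omega\times(0,\infty)$.

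The first step is to unfold the inner $K$-functional using \eqref{3}: for each admissible $\phi\geq 0$,
\[
\int_{\Omega}K(\phi(w),f(w);\vec{A})\,d\mu(w) \;=\; \int_{\Omega}\int_{0}^{\phi(w)}\Psi_{f}(w,s)\,ds\,d\mu(w) \;=\; \int_{E_{\phi}}\Psi_{f}\,d(\mu\times ds),
\]
where $E_{\phi}:=\{(w,s):0<s<\phi(w)\}\subset\Omega\times(0,\infty)$ has product measure exactly $\|\phi\|_{L^{1}}\leq t$. The second step is the observation that since $\Psi_{f}(w,\cdot)$ is nonincreasing in $s$ for each fixed $w$, optimizing $\int_{E}\Psi_{f}\,d(\mu\times ds)$ over \emph{all} measurable sets $E$ of product measure at most $t$ yields the same value as restricting to the ``hypograph'' sets $E_{\phi}$; indeed, given any $E$ with $|E|\leq t$, the rearranged set $\widetilde{E}:=\{(w,s):s<|E\cap(\{w\}\times(0,\infty))|\}$ is of the form $E_{\phi}$ with $\phi(w):=|E\cap(\{w\}\times(0,\infty))|$, satisfies $|\widetilde{E}|=|E|$, and (by the fibrewise monotonicity of $\Psi_{f}$) gives $\int_{\widetilde{E}}\Psi_{f}\geq\int_{E}\Psi_{f}$.

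Combining these two steps, Pisier's formula (\ref{1}) rewrites as
\[
K(t,f;L^{1}(\Omega,A_{0}),L^{\infty}(\Omega,A_{1})) \;=\; \sup_{|E|\leq t}\int_{E}\Psi_{f}\,d(\mu\times ds).
\]
The third and final step is to invoke the classical Hardy--Littlewood equimeasurable rearrangement identity: for any nonnegative measurable $\Psi$ on a $\sigma$-finite measure space,
\[
\sup_{|E|\leq t}\int_{E}\Psi \;=\; \int_{0}^{t}\Psi^{\ast}(s)\,ds,
\]
applied with $\Psi=\Psi_{f}$ on $\Omega\times(0,\infty)$. This delivers the asserted identity.

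The step I expect to require the most care is the second one, where one has to justify restricting the supremum to hypograph-type sets and, in the process, confirm that the rearrangement $\widetilde{E}$ lives in the correct measure class and that the fibrewise comparison is actually valid when $\Psi_{f}$ is only measurable (not necessarily continuous); a short truncation/approximation argument, together with Fubini on $\Omega\times(0,\infty)$, handles this. Everything else is essentially bookkeeping from the definition of $\Psi_{f}$ via (\ref{3}) and the standard rearrangement identity.
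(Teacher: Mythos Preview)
Your proof is correct and is essentially the same as the paper's: both start from Pisier's formula (\ref{1}), unfold the inner $K$-functional via (\ref{3}), and exploit the fibrewise monotonicity of $\Psi_{f}(w,\cdot)$ to reduce to the $(L^{1},L^{\infty})$ rearrangement identity. The paper's presentation differs only cosmetically---it applies (\ref{1}) a second time to $\Psi_{f}$ (after Fubini, writing $L^{1}(\Omega\times(0,\infty))=L^{1}(\Omega,L^{1}(0,\infty))$) rather than invoking Hardy--Littlewood directly; your formulation of the statement as $K(t,f)=\int_{0}^{t}\Psi_{f}^{\ast}$ is exactly the paper's (\ref{2}) rewritten via the Peetre--Oklander formula.
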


\begin{equation}
K(t,f;L^{1}(\Omega,A_{0}),L^{\infty}(\Omega,A_{1}))=K(t,\Psi_{f};L^{1}%
(\Omega\times(0,\infty)),L^{\infty}(\Omega\times(0,\infty)). \label{2}%
\end{equation}

\begin{proof}
It will be useful to give complete details of Pisier's proof of (\ref{2}).
First, by Fubini's theorem, we have
\[
(L^{1}(\Omega\times(0,\infty)),L^{\infty}(\Omega\times(0,\infty))=(L^{1}%
(\Omega,(L^{1}(0,\infty)),L^{\infty}(\Omega,(L^{\infty}(0,\infty))).
\]
Therefore we can use (\ref{1}), to write
\begin{align*}
K(t,\Psi_{f};  &  L^{1}(\Omega\times(0,\infty)),L^{\infty}(\Omega
\times(0,\infty))\\
&  =K(t,\Psi_{f};L^{1}(\Omega,(L^{1}(0,\infty)),L^{\infty}(\Omega,(L^{\infty
}(0,\infty))\\
&  =\sup_{\phi\geq0,\left\Vert \phi\right\Vert _{L^{1}}\leq t}\int_{\Omega
}K(\phi(w),\Psi_{f}(w,\cdot);L^{1}(0,\infty),L^{\infty}(0,\infty))d\mu(w).
\end{align*}
Now, by the known Peetre-Oklander formula for the $K-$functional of the pair
\newline$(L^{1}(0,\infty),L^{\infty}(0,\infty)),$ the fact that in the
representation of a $K-$functional given by (\ref{3}), $k(s,g;\vec{A})$ is
decreasing, we can continue with
\begin{align*}
&  =\sup_{\phi\geq0,\left\Vert \phi\right\Vert _{L^{1}}\leq t}\int_{\Omega
}\int_{0}^{\phi(w)}\Psi_{f}(w,.)^{\ast}(r)drd\mu(w)\\
&  =\sup_{\phi\geq0,\left\Vert \phi\right\Vert _{L^{1}}\leq t}\int_{\Omega
}\int_{0}^{\phi(w)}\Psi_{f}(w,s)dsd\mu(w)\\
&  =\sup_{\phi\geq0,\left\Vert \phi\right\Vert _{L^{1}}\leq t}\int_{\Omega
}K(\phi(w),f(w);\vec{A})d\mu(w)\text{ (by (\ref{3})}\\
&  =K(t,f;L^{1}(\Omega,A_{0}),L^{\infty}(\Omega,A_{1})).\text{ (by
(\ref{1})).}%
\end{align*}

\end{proof}

As a consequence, we obtain the Lions-Peetre-Pisier formula with constants:

\begin{corollary}
\label{coro:marcao}$(L^{1}(\Omega,A_{0}),L^{\infty}(\Omega,A_{1}%
))_{\theta,p_{\theta}}^{K\blacktriangleleft}=L^{p_{\theta}}(\Omega
,(A_{0},A_{1})_{\theta,p_{\theta}}^{K\blacktriangleleft}).$
\end{corollary}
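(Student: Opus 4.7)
The plan is simply to chain together the three ingredients that have just been displayed: Pisier's identity (\ref{2}), the universal equivalence (\ref{sete}), and the description (\ref{lpv}) of the $L^{p_{\theta}}(\Omega,\vec{A}_{\theta,p_{\theta}}^{K\blacktriangleleft})$-norm. Since all three come with constants independent of $\theta$, stringing them together will give the equivalence asserted by the corollary with universal constants.

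First, observe that (\ref{2}) is an equality of $K$-functionals at every level $t>0$. Raising to the power $p_{\theta}$, multiplying by $(s^{-\theta})^{p_{\theta}}$, integrating against $\frac{ds}{s}$, and applying the normalization $c_{\theta,p_{\theta}}$, we obtain
\[
\|f\|_{(L^{1}(\Omega,A_{0}),L^{\infty}(\Omega,A_{1}))_{\theta,p_{\theta}}^{K\blacktriangleleft}}
=\|\Psi_{f}\|_{(L^{1}(\Omega\times(0,\infty)),L^{\infty}(\Omega\times(0,\infty)))_{\theta,p_{\theta}}^{K\blacktriangleleft}}.
\]
This reduces the vector-valued interpolation problem to the \emph{scalar} interpolation of the non-negative function $\Psi_{f}$ on the product measure space $\Omega\times(0,\infty)$.

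Next, apply (\ref{sete}) to the scalar pair $(L^{1}(\Omega\times(0,\infty)),L^{\infty}(\Omega\times(0,\infty)))$ and the element $\Psi_{f}$. The relevant integrand $k(s,\Psi_{f};L^{1},L^{\infty})$ is simply the non-increasing rearrangement $\Psi_{f}^{\ast}(s)$, and since $s^{1-\theta}=s^{1/p_{\theta}}$ one has
\[
\left\{\int_{0}^{\infty}\bigl(s^{1-\theta}\Psi_{f}^{\ast}(s)\bigr)^{p_{\theta}}\frac{ds}{s}\right\}^{1/p_{\theta}}
=\left\{\int_{0}^{\infty}\Psi_{f}^{\ast}(s)^{p_{\theta}}\,ds\right\}^{1/p_{\theta}}
=\|\Psi_{f}\|_{L^{p_{\theta}}(\Omega\times(0,\infty))},
\]
with constants of equivalence independent of $\theta$. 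By Fubini and the definition $\Psi_{f}(w,s)=k(s,f(w);\vec{A})$ this last quantity equals $\bigl(\int_{\Omega}\int_{0}^{\infty}k(s,f(w);\vec{A})^{p_{\theta}}\,ds\,d\mu(w)\bigr)^{1/p_{\theta}}$.

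Finally, re-inserting the weight $s^{1-\theta}=s^{1/p_{\theta}}$ under the inner integral (which changes $ds$ into $\frac{ds}{s}$) we recognize this as precisely the expression appearing on the right-hand side of (\ref{lpv}), which in turn is $\|f\|_{L^{p_{\theta}}(\Omega,\vec{A}_{\theta,p_{\theta}}^{K\blacktriangleleft})}$ up to universal constants. Chaining the three equivalences yields the stated isomorphism. The entire proof is a bookkeeping exercise — there is no genuine obstacle, because Pisier's formula (\ref{2}) already does the essential work of collapsing the vector-valued $K$-functional to a scalar one on a product space, and the $\theta$-uniform reverse-Hardy estimate (\ref{sete}) ensures the normalization constants $c_{\theta,p_{\theta}}$ are absorbed without loss.
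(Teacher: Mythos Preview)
Your proof is correct and follows essentially the same route as the paper: both use Pisier's identity (\ref{2}) to reduce to the scalar pair on $\Omega\times(0,\infty)$, invoke the $\theta$-uniform equivalence (\ref{sete}) to identify the normalized interpolation norm with the $L^{p_\theta}$-norm of $\Psi_f$, and then apply Fubini together with (\ref{lpv}) to recognize the target space. The only cosmetic difference is that the paper first records $(L^{1},L^{\infty})_{\theta,p_\theta}^{K\blacktriangleleft}=L^{p_\theta}(\Omega\times(0,\infty))=L^{p_\theta}(\Omega,L^{p_\theta}(0,\infty))$ as a standalone fact before plugging in $\Psi_f$, whereas you carry $\Psi_f$ through from the start.
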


\begin{proof}
Our only contribution here is that we keep track of the constants to be able
to introduce the symbol $\blacktriangleleft$ in the formula. Observe that by
the usual interpolation formulae for the pair $(L^{1},L^{\infty})$ (cf. also
(\ref{sete})) and Fubini's theorem, we have, with norm equivalence independent
of $\theta,$%
\begin{align*}
(L^{1}(\Omega\times(0,\infty)),L^{\infty}(\Omega\times(0,\infty)))_{\theta
,p_{\theta}}^{K\blacktriangleleft}  &  =L^{p_{\theta}}(\Omega\times
(0,\infty))\\
&  =L^{p_{\theta}}(\Omega,L^{p_{\theta}}(0,\infty)).
\end{align*}
We combine this fact with (\ref{2}) to find that
\begin{align}
\left\Vert f\right\Vert  &  _{(L^{1}(\Omega,A_{0}),L^{\infty}(\Omega
,A_{1}))_{\theta,p_{\theta}}^{K\blacktriangleleft}}\nonumber\\
&  =c_{\theta,p_{\theta}}\left\{  \int_{0}^{\infty}\left(  s^{-\theta}%
K(s,\Psi_{f};L^{1}(\Omega\times(0,\infty)),L^{\infty}(\Omega\times
(0,\infty))\right)  ^{p_{\theta}}\frac{ds}{s}\right\}  ^{1/p_{\theta}%
}\nonumber\\
&  \asymp\left\Vert \Psi_{f}\right\Vert _{L^{p_{\theta}}(\Omega,L^{p_{\theta}%
}(0,\infty))}\nonumber\\
&  =\left(  \int_{\Omega}\left\{  \int_{0}^{\infty}\left(  s^{1-\theta}%
\Psi_{f}(s,w\right)  ^{p_{\theta}}\frac{ds}{s}\right\}  d\mu(w)\right)
^{1/p_{\theta}}\text{ (since }(1-\theta)p_{\theta}=1\text{).} \label{aba}%
\end{align}
Using the definition of $\Psi_{f}$ \ we see that (\ref{lpv}) states that%
\begin{equation}
\left\Vert f\right\Vert _{L^{p_{\theta}}(\Omega,(A_{0},A_{1})_{\theta
,p_{\theta}}^{K\blacktriangleleft})}\asymp\left(  \int_{\Omega}\left\{
\int_{0}^{\infty}\left(  s^{1-\theta}\Psi_{f}(s,w\right)  ^{p_{\theta}}%
\frac{ds}{s}\right\}  d\mu(w)\right)  ^{1/p_{\theta}}. \label{baba}%
\end{equation}
The desired result now follows comparing (\ref{aba}) and (\ref{baba}).
\end{proof}

Obviously the same method can be used to find versions of (\ref{pi1}) that are
valid in the limiting cases.

\begin{corollary}
Let $F$ be a lattice on $[0,1].$ Then%
\[
\langle L^{1}(\Omega,A_{0}),L^{\infty}(\Omega,A_{1})\rangle^{K}_{F}%
=\{f:\Psi_{f}\in\langle L^{1}(\Omega\times(0,\infty)),L^{\infty}(\Omega
\times(0,\infty))\rangle^{K}_{F}\},
\]
isometrically, i.e.
\[
\left\Vert f\right\Vert _{\langle L^{1}(\Omega,A_{0}),L^{\infty}(\Omega
,A_{1})\rangle^{K}_{F}}=\left\Vert \Psi_{f}\right\Vert _{\langle L^{1}%
(\Omega\times(0,\infty)),L^{\infty}(\Omega\times(0,\infty))\rangle^{K}_{F}}.
\]

\end{corollary}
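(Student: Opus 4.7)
The plan is to derive this corollary as an essentially immediate consequence of Pisier's pointwise $K$-functional identity, namely formula (\ref{2}) of the excerpt:
\[
K(t,f;L^{1}(\Omega,A_{0}),L^{\infty}(\Omega,A_{1}))=K(t,\Psi_{f};L^{1}(\Omega\times(0,\infty)),L^{\infty}(\Omega\times(0,\infty))),
\]
valid for every $t>0$ and every $f\in L^{1}(\Omega,A_{0})+L^{\infty}(\Omega,A_{1})$. The key observation is that the norm on $\langle\vec{B}\rangle_{F}^{K}$, for any Banach pair $\vec{B}$ and any parameter lattice $F$ on $[0,1]$, depends only on the pointwise behaviour (in $t$) of $K(t,\cdot;\vec{B})$, so the two $F$-norms must coincide.

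First I would recall that, under the running assumption of this section that $A_{0}\cap A_{1}$ is dense in $A_{0}$, the integral representation (\ref{3}) gives a well-defined decreasing kernel $k(s,g;\vec{A})$, so that $\Psi_{f}(w,s):=k(s,f(w);\vec{A})$ is a bona fide nonnegative measurable function on $\Omega\times(0,\infty)$. This legitimises the right-hand side of the claimed identity and ensures $\Psi_{f}$ belongs to $L^{1}(\Omega\times(0,\infty))+L^{\infty}(\Omega\times(0,\infty))$ as soon as $f$ belongs to the corresponding sum space for the vector-valued pair.

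Next I would apply the definition of the limiting interpolation norm (as given in Section~\ref{sec:real} in the form $\|b\|_{\langle\vec{B}\rangle_{F}^{K}}=\|K(\cdot,b;\vec{B})\|_{F}$) to both sides of Pisier's identity. Since the two functions $t\mapsto K(t,f;L^{1}(\Omega,A_{0}),L^{\infty}(\Omega,A_{1}))$ and $t\mapsto K(t,\Psi_{f};L^{1}(\Omega\times(0,\infty)),L^{\infty}(\Omega\times(0,\infty)))$ agree identically on $(0,\infty)$, a fortiori they agree on $(0,1]$, so their $F$-norms are \emph{exactly equal}. This immediately yields
\[
\|f\|_{\langle L^{1}(\Omega,A_{0}),L^{\infty}(\Omega,A_{1})\rangle_{F}^{K}}=\|\Psi_{f}\|_{\langle L^{1}(\Omega\times(0,\infty)),L^{\infty}(\Omega\times(0,\infty))\rangle_{F}^{K}},
\]
and the set-theoretic identification of the two spaces follows at once: finiteness of one norm is equivalent to finiteness of the other.

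There is no genuine obstacle here beyond bookkeeping; the substantive content was already carried by the proof of Pisier's formula (\ref{2}), and the only thing worth a careful word is the observation that no distortion constant appears, i.e.\ the equality is isometric rather than merely equivalent, which is why the statement can be recorded with $=$ in place of $\asymp$. I would end by remarking that, specialising $F$ to the weighted $L^{q}$-lattices used in Section~\ref{sec:main}, one recovers the vector-valued extensions of the limiting versions of the Lions--Peetre formula (\ref{pi1}) referred to immediately before the statement.
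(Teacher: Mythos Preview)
Your proposal is correct and matches the paper's approach: the corollary is stated without an explicit proof, preceded only by the remark that ``obviously the same method can be used,'' meaning exactly what you wrote---Pisier's pointwise identity (\ref{2}) for the $K$-functional, followed by taking the $F$-norm of both sides, yields the isometric equality immediately.
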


In the next set of results we use Example \ref{sumexample} from Introduction;
so it will be useful to provide the details here.

\begin{example}
\label{sumsum}For any measure space $(\Omega,\mu)$ we have
\begin{equation}
{\displaystyle\sum\limits_{\theta}}\frac{1}{\theta}(L^{1}(\Omega),L^{\infty
}(\Omega))_{\theta,q}^{K\blacktriangleleft}=\langle L^{1}(\Omega),L^{\infty
}(\Omega)\rangle_{0,1}^{K}=L(LogL)(\Omega)+L^{\infty}(\Omega).
\label{determinada}%
\end{equation}

\end{example}

\begin{proof}
Since $K(t,f;L^{1},L^{\infty})=tf^{\ast\ast}(t),$ it follows from (\ref{sum3})
that
\[%
{\displaystyle\sum\limits_{\theta}}
\frac{1}{\theta}(L^{1},L^{\infty})_{\theta,q}^{K\blacktriangleleft}=\{f:%
{\displaystyle\int\limits_{0}^{\text{ }1}}
f^{\ast\ast}(s)\,ds\}<\infty.
\]
On the other hand, the $K-$functional for the pair $(L(LogL)(\Omega
),L^{\infty}(\Omega))$ is given by (cf. \cite{BE})%
\[
K(t,f;L(\log L),L^{\infty})\asymp\left\Vert f^{\ast}\chi_{(0,\phi^{-1}%
(t))}\right\Vert _{L(LogL)},
\]
where $\phi^{-1}(t)$ is the inverse of the fundamental function of $L(LogL).$
Without loss of generality we can modify $\phi$ so that $\phi^{-1}(1)=1.$ It
follows that%
\begin{align*}
\left\Vert f\right\Vert _{L(\log L)+L^{\infty}}  &  =K(1,f;L(\log
L),L^{\infty})\\
&  \asymp\left\Vert f^{\ast}\chi_{(0,1)}\right\Vert _{L(LogL)}\\
&  \asymp\int_{0}^{1}f^{\ast}(s)(1+\log\frac{1}{s})\,ds\\
&  \asymp\int_{0}^{1}f^{\ast}(s)ds+\int_{0}^{1}f^{\ast\ast}(s)\,ds\\
&  \asymp\int_{0}^{1}f^{\ast\ast}(s)\,ds.
\end{align*}

\end{proof}

\begin{example}
Let $w_{\alpha}(t)=\left(  \log\frac{e}{t}\right)  ^{\alpha},$ $\alpha>0$, and
let $F_{\alpha}=L^{1}((0,1),w_{\alpha}(t)\frac{dt}{t}),$ then%
\[
\langle L^{1}(\Omega,A_{0}),L^{\infty}(\Omega,A_{1})\rangle^{K}_{F_{\alpha-1}%
}=\{f:\Psi_{f}\in L(LogL)^{\alpha}(\Omega\times(0,\infty))+L^{\infty}%
(\Omega\times(0,\infty))\},
\]
and%
\begin{align*}
\left\Vert f\right\Vert _{\langle L^{1}(\Omega,A_{0}),L^{\infty}(\Omega
,A_{1})\rangle^{K}_{F_{\alpha-1}}}  &  \asymp\left\Vert \Psi_{f}\right\Vert
_{L(LogL)^{\alpha}(\Omega\times(0,\infty))+L^{\infty}(\Omega\times(0,\infty
))}\\
&  =\left\Vert k(s,f(w);\vec{A})\right\Vert _{L(LogL)^{\alpha}(\Omega
\times(0,\infty))+L^{\infty}(\Omega\times(0,\infty))}.
\end{align*}

\end{example}

\begin{proof}
For any measure space $\Theta,$ and for all $\alpha>0,$ a slight
generalization of Example \ref{sumsum} yields that
\begin{align}
\left\Vert g\right\Vert _{L(\log L)^{\alpha}(\Theta)+L^{\infty}(\Theta)}  &
=K(1,g;L(\log L)^{\alpha}(\Theta),L^{\infty}(\Theta))\nonumber\\
&  \asymp\int_{0}^{1}K(s,g;L^{1}(\Theta),L^{\infty}(\Theta))\log^{\alpha
-1}(\frac{1}{s})\frac{ds}{s}\label{4}\\
&  =\left\Vert g\right\Vert _{\langle L^{1}(\Theta),L^{\infty}(\Theta
)\rangle^{K}_{F_{\alpha-1}}}.\nonumber
\end{align}
Let $f\in\langle L^{1}(\Omega,A_{0}),L^{\infty}(\Omega,A_{1})\rangle
^{K}_{F_{\alpha-1}}.$ Then the desired result obtains letting $\Theta
=\Omega\times(0,\infty),$ and $g=\Psi_{f}$.
\end{proof}

\begin{example}
\label{exa:marcao}Let $\vec{A}$ be an ordered pair. Suppose that $\mu
(\Omega)<\infty.$ Then \newline$(L^{1}(\Omega,A_{0}),L^{\infty}(\Omega
,A_{1}))$ is an ordered pair, and since $k(s,g;\vec{A})=0$ for $s>1,$ using
the notation of the previous example we can write%
\[
\langle L^{1}(\Omega,A_{0}),L^{\infty}(\Omega,A_{1})\rangle^{K}_{F_{\alpha-1}%
}=\{f:\Psi_{f}\in L(LogL)^{\alpha}(\Omega\times(0,1))+L^{\infty}(\Omega
\times(0,1))\}.
\]
Since in this case we have that
\[
L(LogL)^{\alpha}(\Omega\times(0,1))+L^{\infty}(\Omega\times
(0,1))=L(LogL)^{\alpha}(\Omega\times(0,1)),
\]
it follows that%
\[
\langle L^{1}(\Omega,A_{0}),L^{\infty}(\Omega,A_{1})\rangle^{K}_{F_{\alpha-1}%
}=\{f:\Psi_{f}\in L(LogL)^{\alpha}(\Omega\times(0,1))\}.
\]

\end{example}

Now, we show how the results of this section can be used to extend the
classical extrapolation theorems of Yano. A more detailed exposition will be
given elsewhere (cf. \cite{ALM}). To simplify the discussion we work with
finite measure spaces. From the point of view of extrapolation theory Yano's
theorem is simply the statement that (there is an analogous formula for the
$\Delta-$method, which will not be considered here)%
\[
\sum\limits_{p>1}\frac{L^{p}(\Omega)}{(p-1)^{\alpha}}=L(LogL)^{\alpha}%
(\Omega).
\]

\begin{theorem}
Let $\vec{A}$ be an ordered pair, and for $f\in L^{1}(\Omega,A_{0})+L^{\infty
}(\Omega,A_{1})$ let $\Psi_{f}(s,w)=k(s,f(w);\vec{A}),$ $(w,s)\in\Omega
\times(0,1).$ Then,%
\[
\sum_{\theta}\frac{L^{p_{\theta}}(\Omega,(A_{0},A_{1})_{\theta,p_{\theta}%
}^{K\blacktriangleleft})}{\theta^{\alpha}}=\{f:\Psi_{f}\in L(LogL)^{\alpha
}(\Omega\times(0,1))\}.
\]

\end{theorem}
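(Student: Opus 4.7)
Since $\mu(\Omega)<\infty$ and $A_1\subset A_0$, the pair $\vec{B}:=(L^{1}(\Omega,A_{0}),L^{\infty}(\Omega,A_{1}))$ is ordered, so Corollary~\ref{coro:marcao} applies at every $\theta\in(0,1)$ with constants independent of $\theta$ and gives
\[
L^{p_{\theta}}(\Omega,(A_{0},A_{1})_{\theta,p_{\theta}}^{K\blacktriangleleft})=\vec{B}^{K\blacktriangleleft}_{\theta,p_{\theta}}.
\]
The first step is to use this identity to rewrite the left-hand side of the theorem as the extrapolation sum $\sum_{\theta}\theta^{-\alpha}\vec{B}^{K\blacktriangleleft}_{\theta,p_{\theta}}$, thereby reducing the vector-valued statement to a question about scalar interpolation norms of the concrete ordered pair $\vec{B}$.

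The second step is to invoke a weighted $\Sigma$-extrapolation description, namely the natural generalization of Example~\ref{sumexample} (equivalently, of (\ref{sum3})) from weight $\theta^{-1}$ to weight $\theta^{-\alpha}$, $\alpha>0$: for every ordered Banach pair $\vec{C}$ and every measurable $q(\theta)\in[1,\infty]$,
\[
\sum_{\theta}\frac{\vec{C}^{K\blacktriangleleft}_{\theta,q(\theta)}}{\theta^{\alpha}}\asymp\langle\vec{C}\rangle^{K}_{F_{\alpha-1}},
\]
where $F_{\alpha-1}=L^{1}((0,1),(\log(e/t))^{\alpha-1}\,dt/t)$ is the lattice introduced in Example~\ref{exa:marcao}. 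Applied with $\vec{C}=\vec{B}$ and $q(\theta)=p_{\theta}$, this reduces the problem to identifying the limiting $K$-space $\langle\vec{B}\rangle^{K}_{F_{\alpha-1}}$; but that identification is precisely Example~\ref{exa:marcao}, which, via the Pisier rearrangement $\Psi_{f}(s,w)=k(s,f(w);\vec{A})$ and the scalar computation (\ref{4}), gives $\langle\vec{B}\rangle^{K}_{F_{\alpha-1}}=\{f:\Psi_{f}\in L(LogL)^{\alpha}(\Omega\times(0,1))\}$. Concatenating the three equalities yields the theorem.

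The main obstacle is justifying the weighted $\Sigma$-extrapolation identity used in the second step. For $\alpha=1$ it is the content of (\ref{sum3}). For general $\alpha>0$ the embedding $\langle\vec{C}\rangle^{K}_{F_{\alpha-1}}\subset\sum_{\theta}\theta^{-\alpha}\vec{C}^{K\blacktriangleleft}_{\theta,q(\theta)}$ is obtained by partitioning $(0,1)$ into dyadic shells $(e^{-(n+1)},e^{-n}]$, choosing $\theta_{n}\sim 1/n$ so that $\theta_{n}^{-\alpha}\sim(\log(e/s))^{\alpha}$ on the $n$-th shell, and assembling a decomposition of $f$ from the strong form of the fundamental lemma of interpolation theory. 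The reverse embedding follows pointwise from the elementary estimate
\[
\|f\|_{\vec{C}^{K\blacktriangleleft}_{\theta,q}}\ \geq\ c\,\theta^{1/q}\,s^{-\theta}K(s,f;\vec{C})
\]
(compare with (\ref{Basaux}) in the proof of Theorem~\ref{StrExtrAbst}), optimized at $s=e^{-1/(2\theta)}$ and integrated against the weight $w_{\alpha-1}$; the universal nature of these two bounds is exactly why the choice of $q(\theta)$ in the sum is immaterial, as is already the case in Example~\ref{sumexample}.
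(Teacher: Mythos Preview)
Your proposal is correct and follows essentially the same three-step route as the paper: apply Corollary~\ref{coro:marcao} to rewrite the sum as $\sum_{\theta}\theta^{-\alpha}\vec{B}^{K\blacktriangleleft}_{\theta,p_{\theta}}$, invoke the weighted $\Sigma$-extrapolation identity $\sum_{\theta}\theta^{-\alpha}\vec{B}^{K\blacktriangleleft}_{\theta,q(\theta)}=\langle\vec{B}\rangle^{K}_{F_{\alpha-1}}$, and finish with Example~\ref{exa:marcao}. The only difference is that the paper dispatches the middle step by a direct citation to \cite{JM91}, whereas you sketch its proof via dyadic shells and the strong fundamental lemma; this is just an elaboration of the cited result, not a different argument.
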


\begin{proof}
By Corollary \ref{coro:marcao}%
\[
\sum_{\theta}\frac{L^{p_{\theta}}(\Omega,(A_{0},A_{1})_{\theta,p_{\theta}%
}^{K\blacktriangleleft})}{\theta^{\alpha}}=\sum_{\theta}\frac{(L^{1}%
(\Omega,A_{0}),L^{\infty}(\Omega,A_{1}))_{\theta,p_{\theta}}%
^{K\blacktriangleleft}}{\theta^{\alpha}}%
\]
Now by \cite{JM91}%
\[
\sum_{\theta}\frac{(L^{1}(\Omega,A_{0}),L^{\infty}(\Omega,A_{1}))_{\theta
,p_{\theta}}^{K\blacktriangleleft}}{\theta^{\alpha}}=\langle L^{1}%
(\Omega,A_{0}),L^{\infty}(\Omega,A_{1})\rangle_{F_{\alpha-1}}%
\]
and by Example \ref{exa:marcao} we can continue with%
\[
=\{f:\Psi_{f}\in L(LogL)^{\alpha}(\Omega\times(0,1))\},
\]
as we wished to show.
\end{proof}

\section{Further applications\label{sec:last}}

\subsection{Weights and $K/J$ equivalence\label{sec:equivalence}}

There is a simple mechanism underlying (\ref{equivJ}) that seems worthwhile to
discuss in detail. Let $w$ be a weight, $w:(0,\infty)\rightarrow
\lbrack0,\infty)$ (e.g., $w(s)=\chi_{(0,1)}(s)$), and let $\vec{A}$ be a
Gagliardo complete pair of Banach spaces. Define
\[
\vec{A}_{w}^{K}:=\{f:f\in A_{0}+A_{1}\text{ s.t. }\left\Vert f\right\Vert
_{\vec{A}_{w}^{K}}<\infty\},
\]
where%
\[
\left\Vert f\right\Vert _{\vec{A}_{w}^{K}}:=\int_{0}^{\infty}K(s,f;\vec
{A})w(s)\,\frac{ds}{s}.
\]
Likewise, we define%
\[
\vec{A}_{w}^{J}:=\{f=\int_{0}^{\infty}u(s)\,\frac{ds}{s},\text{ with
}u:(0,\infty)\rightarrow A_{0}\cap A_{1},\text{ }\left\Vert f\right\Vert
_{\vec{A}_{w}^{J}}<\infty\},
\]
where%
\[
\left\Vert f\right\Vert _{\vec{A}_{w}^{J}}:=\inf_{f=\int_{0}^{\infty
}u(s)\,\frac{ds}{s}}\{\int_{0}^{\infty}J(s,u(s);\vec{A})w(s)\frac{ds}{s}\}.
\]

\begin{lemma}
\label{lemamarkao}Let $w$ be a nonnegative locally summable function on
$(0,\infty)$ such that $\int_{0}^{\infty}w(s)\,ds=\infty$. For each Gagliardo
complete pair $\vec{A}$ such that $A_{0}\cap A_{1}$ is dense in $A_{0}$ it
holds%
\[
\vec{A}_{w}^{K}=\vec{A}_{\tilde{w}}^{J},
\]
where%
\[
\tilde{w}(t)=\int_{0}^{\infty}\min\{1,\frac{s}{t}\}w(s)\frac{ds}{s}.
\]

\end{lemma}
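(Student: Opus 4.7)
My plan is to prove the two embeddings separately; both follow by Fubini's theorem applied to the definition of $\tilde{w}$, but the two directions use the two complementary facets of the $K/J$ duality.

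For the easy embedding $\vec{A}_{\tilde{w}}^{J}\subset\vec{A}_{w}^{K}$, I would take any $f\in \vec{A}_{\tilde{w}}^{J}$ together with a representation $f=\int_{0}^{\infty}u(s)\frac{ds}{s}$ in $A_{0}+A_{1}$. The pointwise estimate $K(t,u;\vec{A})\leq \min\{1,t/s\}J(s,u;\vec{A})$ (valid because $K(t,u)\leq \|u\|_{A_{0}}$ and $K(t,u)\leq t\|u\|_{A_{1}}$, whence $K(t,u)\leq \min\{1,t/s\}\max(\|u\|_{A_{0}},s\|u\|_{A_{1}})$) yields
\[
K(t,f;\vec{A})\leq \int_{0}^{\infty}\min\{1,t/s\}\,J(s,u(s);\vec{A})\frac{ds}{s}.
\]
Multiplying by $w(t)$, integrating in $\frac{dt}{t}$, and swapping the order of integration produces
\[
\|f\|_{\vec{A}_{w}^{K}}\leq \int_{0}^{\infty}J(s,u(s);\vec{A})\,\tilde{w}(s)\,\frac{ds}{s},
\]
and the inequality $\|f\|_{\vec{A}_{w}^{K}}\leq \|f\|_{\vec{A}_{\tilde{w}}^{J}}$ follows upon infimizing over representations.

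For the reverse embedding $\vec{A}_{w}^{K}\subset \vec{A}_{\tilde{w}}^{J}$, I would invoke the strong form of the fundamental lemma (equation (\ref{j3})), which is available here thanks to the hypotheses that $\vec{A}$ is Gagliardo complete and $A_{0}\cap A_{1}$ is dense in $A_{0}$ (this is exactly the context of Section~\ref{sec:peetre}). Given $f\in \vec{A}_{w}^{K}$, we can select a representation $f=\int_{0}^{\infty}u(s)\frac{ds}{s}$ satisfying
\[
\int_{0}^{\infty}\min\{1,t/s\}\,J(s,u(s);\vec{A})\,\frac{ds}{s}\leq \gamma\, K(t,f;\vec{A}),\qquad t>0,
\]
with a universal constant $\gamma$. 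Multiplying this inequality by $w(t)$, integrating in $\frac{dt}{t}$ and applying Fubini on the left-hand side yields
\[
\int_{0}^{\infty}J(s,u(s);\vec{A})\,\tilde{w}(s)\,\frac{ds}{s}\leq \gamma\,\|f\|_{\vec{A}_{w}^{K}},
\]
so $\|f\|_{\vec{A}_{\tilde{w}}^{J}}\leq \gamma\, \|f\|_{\vec{A}_{w}^{K}}$.

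The assumption $\int_{0}^{\infty}w(s)\,ds=\infty$ should enter only for technical purposes in this second step: it ensures that every $f\in \vec{A}_{w}^{K}$ necessarily satisfies the behavior $K(t,f;\vec{A})/t\to 0$ (at $\infty$) and $K(t,f;\vec{A})\to 0$ (at $0$) needed to invoke the strong fundamental lemma (i.e., to place $f$ in the closure of $A_{0}\cap A_{1}$ in the Gagliardo sense). The main obstacle, therefore, is not any of the computations above (which are just double-integration), but verifying that the hypotheses allow the application of the strong form of the fundamental lemma; once that is in place, the proof collapses to Fubini and the definition of $\tilde{w}$.
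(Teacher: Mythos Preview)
Your proposal is correct and follows essentially the same approach as the paper: both directions are handled by Fubini applied to the definition of $\tilde{w}$, with the easy direction using $K(t,\cdot)\leq\min\{1,t/s\}J(s,\cdot)$ and the hard direction using the strong form of the fundamental lemma. The paper makes explicit the one step you flag as the main obstacle: from $\int_{0}^{\infty}K(s,f;\vec{A})w(s)\frac{ds}{s}\geq\frac{K(t,f;\vec{A})}{t}\int_{0}^{t}w(s)\,ds$ and $\int_{0}^{\infty}w=\infty$ one gets $K(t,f)/t\to 0$ as $t\to\infty$, and then the density of $A_{0}\cap A_{1}$ in $A_{0}$ (together with Gagliardo completeness) supplies $K(t,f)\to 0$ as $t\to 0$, so the strong fundamental lemma applies.
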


\begin{proof}
Let $f\in$ $\vec{A}_{\tilde{w}}^{J}.$ We can represent $f=\int_{0}^{\infty
}u(s)\frac{ds}{s}$ in such a way that%
\[
\left\Vert f\right\Vert _{\vec{A}_{\tilde{w}}^{J}}\asymp\int_{0}^{\infty
}J(s,u(s);\vec{A})\tilde{w}(s)\frac{ds}{s}.
\]
Using Minkowski's inequality, and the fact that (cf. \cite[Lemma 3.2.1
page~42]{BL}), $K(t,a;\vec{A})\leq\min\{1,\frac{t}{s}\}J(s,a;\vec{A}),$ we
get
\[
K(t,f;\vec{A})\leq\int_{0}^{\infty}\min\{1,\frac{t}{s}\}J(s,u(s);\vec{A}%
)\frac{ds}{s}.
\]
Integrating the last inequality and then using successively Fubini's theorem,
and the definition of $\tilde{w},$ we find,%
\begin{align*}
\int_{0}^{\infty}K(t,f;\vec{A})w(t)\frac{dt}{t}  &  \leq\int_{0}^{\infty}%
\int_{0}^{\infty}\min\{1,\frac{t}{s}\}\int_{0}^{\infty}J(s,u(s);\vec{A}%
)\frac{ds}{s}w(t)\frac{dt}{t}\\
&  =\int_{0}^{\infty}J(s,u(s);\vec{A})\int_{0}^{\infty}\min\{1,\frac{t}%
{s}\}w(t)\frac{dt}{t}\frac{ds}{s}\\
&  =\int_{0}^{\infty}J(s,u(s);\vec{A})\tilde{w}(s)\frac{ds}{s}\\
&  \asymp\left\Vert f\right\Vert _{\vec{A}_{\tilde{w}}^{J}}.
\end{align*}
On the other hand, suppose that $f\in$ $\vec{A}_{\tilde{w}}^{K}.$ Observe that
for all $t>0$ we have%
\begin{align*}
\int_{0}^{\infty}K(s,f;\vec{A})w(s)\,\frac{ds}{s}  &  \geq\int_{0}
^{t}K(s,f;\vec{A})w(s)\,\frac{ds}{s}\\
&  \geq\frac{K(t,f;\vec{A})}{t}\int_{0}^{t}w(s)\,ds. \label{dua2}%
\end{align*}
Hence, since $\int_{0}^{\infty}w(s)\,ds=\infty$ and $A_{0}\cap A_{1}$ is dense
in $A_{0},$ we deduce that%
\[
\lim_{t\to0}K(t,f;\vec{A})=\lim_{t\to\infty}\frac{K(t,f;\vec{A})}{t}=0.
\]
Consequently, by the strong form of the fundamental lemma (cf. \cite{CJM90}
and the references therein), we can find a representation $f=\int_{0}^{\infty
}u(s)\frac{ds}{s},$ such that%
\[
\int_{0}^{\infty}\min\{1,\frac{t}{s}\}J(s,u(s);\vec{A})\frac{ds}{s}\leq\gamma
K(t,f;\vec{A}),
\]
where $\gamma$ is an absolute constant. Multiplying the last inequality by
$w(t)$ and then integrating, by Fubini's theorem, we see that%
\[
\int_{0}^{\infty}J(s,u(s);\vec{A})\tilde{w}(s)\frac{ds}{s}\leq\gamma\left\Vert
f\right\Vert _{\vec{A}_{{w}}^{K}}.
\]
Thus,%
\[
\left\Vert f\right\Vert _{\vec{A}_{\tilde{w}}^{J}}\leq\gamma\left\Vert
f\right\Vert _{\vec{A}_{{w}}^{K}},
\]
as we wished to show.
\end{proof}

In particular, as have pointed out above, for ordered pairs $\vec{A}$ we may
restrict the weights to be defined on the interval $(0,1).$ Then $\tilde
{w}(t)=\int_{0}^{1}\min\{1,\frac{s}{t}\}w(s)\frac{ds}{s}$, $t\in(0,1)$, and we
have
\[
\langle\vec{A}\rangle_{w}^{K}=\langle\vec{A}\rangle_{\tilde{w}}^{J}.
\]
For example, if $w\equiv1,$ then $\tilde{w}(t)=1+\log\frac{1}{t},t\in(0,1).$
More generally, if $w_{\alpha}(t)=\left(  \log\frac{e}{t}\right)  ^{\alpha},$
$\alpha>-1$, then $\tilde{w}(t)\asymp\left(  \log\frac{e}{t}\right)
^{\alpha+1} $, and with norm equivalence we have%
\[
\langle\vec{A}\rangle_{w_{\alpha}}^{K}=\langle\vec{A}\rangle_{w_{\alpha+1}%
}^{J}.
\]
(see Remark~\ref{K/J formulas}).

\begin{example}
\label{exa:bre}The following extension of (\ref{prova1}) holds (cf.
\cite{JM91})%
\[
\left\Vert f\right\Vert _{\langle\vec{A}\rangle_{w_{\alpha}}^{K}}\leq
c\left\Vert f\right\Vert _{A_{0}}\log^{\alpha+1}\left(  e+\frac{\left\Vert
f\right\Vert _{A_{1}}}{\left\Vert f\right\Vert _{A_{0}}}\right)  ,f\in A_{1}.
\]
Inequalities of this type specialize to well-known inequalities in Analysis
(cf. \cite{brg}, \cite{brw}).
\end{example}

\subsection{Limits of interpolation spaces and extrapolation\label{sec:apllim}%
}

In this section we show how limit theorems of the form (\ref{lim1}),
(\ref{lim2}) can be combined with the strong form of the fundamental lemma to
prove extrapolation theorems. For more general results see \cite{ALM}.







Firstly, observe that from the strong form of the fundamental lemma precisely
as in the proof of embedding \eqref{fromprevious} above (cf. \cite{JM91}) it
follows
\begin{equation}
\left\Vert a\right\Vert _{\langle\vec{A}\rangle_{\theta,1}^{J}}\leq
C_{0}\theta(1-\theta)\left\Vert a\right\Vert _{\langle\vec{A}\rangle
_{\theta,1}^{K}}. \label{special ineq}%
\end{equation}

\begin{theorem}
\label{teoyano}(Abstract form of Yano's theorem). Let $\vec{A}$ be an ordered
Gagliardo complete pair. Suppose that $T$ is a bounded linear operator such
that%
\[
T:\,\langle\vec{A}\rangle_{\theta,1}^{J}\rightarrow\langle\vec{A}%
\rangle_{\theta,1}^{J},\text{ with }\left\Vert T\right\Vert _{\langle\vec
{A}\rangle_{\theta,1}^{J}\rightarrow\langle\vec{A}\rangle_{\theta,1}^{J}}%
\leq\frac{C}{\theta},\;\;0<\theta<1.
\]
Then,%
\[
T:\,\langle\vec{A}\rangle_{0,1}^{K}\rightarrow A_{0}.
\]

\end{theorem}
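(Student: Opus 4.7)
The plan is to chain three estimates that all depend on the same parameter $\theta$ and watch the explicit powers of $\theta$ cancel, then pass to the limit $\theta \to 0^{+}$. The key input is inequality \eqref{special ineq}, itself derived from the strong form of the fundamental lemma, which provides the $K\to J$ embedding with an explicit factor of $\theta(1-\theta)$; this factor is exactly what is needed to absorb the $1/\theta$ blowup in the operator norm of $T$.

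Concretely, I would fix $a \in \langle\vec{A}\rangle_{0,1}^{K}$ and, using that the pair is ordered, write
\[
\|a\|_{\langle\vec{A}\rangle_{\theta,1}^{K}} = \int_{0}^{1} s^{-\theta}\,K(s,a;\vec{A})\,\frac{ds}{s}.
\]
Since $s^{-\theta}\downarrow 1$ on $(0,1)$ as $\theta\downarrow 0$, monotone convergence shows that $\|a\|_{\langle\vec{A}\rangle_{\theta,1}^{K}}\downarrow\|a\|_{\langle\vec{A}\rangle_{0,1}^{K}}$, so in particular $a\in\langle\vec{A}\rangle_{\theta,1}^{K}$ for every $\theta\in(0,1)$ with a uniformly controlled norm. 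Invoking \eqref{special ineq} places $a$ in $\langle\vec{A}\rangle_{\theta,1}^{J}$, and applying the hypothesis on $T$ then gives
\[
\|Ta\|_{\langle\vec{A}\rangle_{\theta,1}^{J}} \;\leq\; \frac{C}{\theta}\,\|a\|_{\langle\vec{A}\rangle_{\theta,1}^{J}} \;\leq\; C\,C_{0}(1-\theta)\,\|a\|_{\langle\vec{A}\rangle_{\theta,1}^{K}},
\]
with the $1/\theta$ from the operator norm cancelled by the $\theta$ from the $K\to J$ comparison.

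To bring this back to an $A_{0}$-estimate I would use the elementary half of the limit relation \eqref{lim2}, namely $\|x\|_{A_{0}}\leq\|x\|_{\langle\vec{A}\rangle_{\theta,1}^{J}}$ for every $\theta\in(0,1)$. This follows at once from the fact that for any admissible representation $x=\int_{0}^{1}u(s)\,\frac{ds}{s}$ in $A_{0}+A_{1}=A_{0}$, Minkowski's inequality gives $\|x\|_{A_{0}}\leq\int_{0}^{1}\|u(s)\|_{A_{0}}\,\frac{ds}{s}\leq\int_{0}^{1}s^{-\theta}J(s,u(s);\vec{A})\,\frac{ds}{s}$ (using $s^{-\theta}\geq 1$ on $(0,1)$), followed by taking the infimum over representations. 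Combining all inequalities gives $\|Ta\|_{A_{0}}\leq C\,C_{0}(1-\theta)\,\|a\|_{\langle\vec{A}\rangle_{\theta,1}^{K}}$ for each $\theta\in(0,1)$, and letting $\theta\to 0^{+}$ delivers the desired bound $\|Ta\|_{A_{0}}\leq C\,C_{0}\,\|a\|_{\langle\vec{A}\rangle_{0,1}^{K}}$.

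I anticipate no serious obstacle: the argument is essentially a bookkeeping exercise once the cancellation is recognized, and the required analytic input (the strong fundamental lemma, encoded in \eqref{special ineq}) has already been isolated. The only mild point to verify is that $Ta$ denotes an unambiguous element of $A_{0}$, independent of $\theta$; this is automatic from the fact that all the spaces $\langle\vec{A}\rangle_{\theta,1}^{J}$ are continuously embedded in the common ambient space $A_{0}$ and $T$ acts consistently on them.
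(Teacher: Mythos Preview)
Your chain of inequalities is exactly the right mechanism, and your direct verification of $\|x\|_{A_0}\le\|x\|_{\langle\vec A\rangle_{\theta,1}^J}$ is correct (and in fact slightly cleaner than invoking the limit \eqref{lim2}). However, the step ``monotone convergence shows that $\|a\|_{\langle\vec A\rangle_{\theta,1}^K}\downarrow\|a\|_{\langle\vec A\rangle_{0,1}^K}$, so in particular $a\in\langle\vec A\rangle_{\theta,1}^K$ for every $\theta$'' is a genuine gap. Monotone convergence for a \emph{decreasing} family of integrands requires finiteness of the integral for at least one positive $\theta$, and that is precisely what is in question. In general $\langle\vec A\rangle_{0,1}^K\not\subset\langle\vec A\rangle_{\theta,1}^K$ for any $\theta>0$: take $\vec A=(L^1[0,1],L^\infty[0,1])$, so that $\langle\vec A\rangle_{0,1}^K=L\log L$ while $\langle\vec A\rangle_{\theta,1}^K$ is a Lorentz space $L^{1/(1-\theta),1}$; a function with $a^\ast(s)\sim s^{-1}(\log(e/s))^{-3}$ near $0$ lies in $L\log L$ but in no $L^{p,1}$, $p>1$. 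For such $a$ your inequality $\|Ta\|_{\langle\vec A\rangle_{\theta,1}^J}\le\tfrac{C}{\theta}\|a\|_{\langle\vec A\rangle_{\theta,1}^J}$ has an infinite right-hand side for every $\theta>0$, and more to the point $Ta$ is not even defined, since $a$ lies in none of the domains on which $T$ is assumed to act.

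The remedy is the one the paper uses: run your argument first for $f\in A_1$, where $K(s,f;\vec A)\le s\|f\|_{A_1}$ guarantees $\|f\|_{\langle\vec A\rangle_{\theta,1}^K}\le(1-\theta)^{-1}\|f\|_{A_1}<\infty$ for every $\theta$, and then pass to all of $\langle\vec A\rangle_{0,1}^K$ by density of $A_1$ (which follows from the $J$-description of $\langle\vec A\rangle_{0,1}^K$ in Lemma~\ref{lemamarkao}). With that modification your proof coincides with the paper's.
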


\begin{proof}
Let $f\in A_{1}.$ From the assumptions combined with

the inequality \eqref{special ineq}, we have that, for all $\theta\in(0,1),$
\[
\left\Vert Tf\right\Vert _{\langle\vec{A}\rangle_{\theta,1}^{J}}\leq
\frac{CC_{0}}{\theta}\cdot\theta(1-\theta)\left\Vert f\right\Vert
_{\langle\vec{A}\rangle_{\theta,1}^{K}}.
\]
Now let $\theta\rightarrow0,$ and compute the limits using

(\ref{lim2}) to find%
\[
\left\Vert Tf\right\Vert _{A_{0}}\leq\lim_{\theta\rightarrow0}\left\Vert
Tf\right\Vert _{\langle\vec{A}\rangle_{\theta,1}^{J}}\leq CC_{0}\lim
_{\theta\rightarrow0}\left\Vert f\right\Vert _{\langle\vec{A}\rangle
_{\theta,1}^{K}}\leq2CC_{0}\left\Vert f\right\Vert _{\langle\vec{A}%
\rangle_{0,1}^{K}}.
\]
Thus, we have obtained the desired inequality under the assumption that $f\in
A_{1},$ but this restriction can be eliminated on account of the fact that
$A_{1}$ is dense in $\langle\vec{A}\rangle_{0,1}^{K}$ (cf. Lemma
\ref{lemamarkao}).
\end{proof}

\subsection{Limiting Spaces with \textquotedblleft broken logarithms"}

In the literature of limiting spaces one finds the so called \textquotedblleft
spaces with broken logarithms\textquotedblright. In this section we wish to
point out how spaces of this type appear in extrapolation theory. We shall be
brief and refer to \cite{AL2017} and \cite{JM91} for details.

Let $\vec{A}$ be a Gagliardo complete Banach pair, and let $M(\theta)$ be a
weight defined on $(0,1).$ One can consider very general classes of weights
(cf. \cite{AL2017}) but here we shall only discuss the following example%
\[
M(\theta)=\theta^{-\alpha_{0}}(1-\theta)^{-\beta_{0}},\alpha_{0},\beta_{0}%
\geq0.
\]
It is then shown in \cite[Corollary 3.5, page 24, Example 3.15 page
31-32]{JM91} that if $T$ is a bounded linear operator, such that $T:\bar
{A}_{\theta,1}^{J}\rightarrow\bar{A}_{\theta,\infty}^{K},$ with norm
$\left\Vert T\right\Vert _{\bar{A}_{\theta,1}^{J}\rightarrow\bar{A}%
_{\theta,\infty}^{K}}\le C M(\theta),$ then%
\[
K(t,Tf;\vec{A})\le C^{\prime}\int_{0}^{\infty}\left[  \left(  \log^{+}\frac
{t}{s}\right)  ^{\alpha_{0}-1}+\left(  \log^{+}\frac{s}{t}\right)  ^{\beta
_{0}-1}\right]  K(s,f;\vec{A})\,\frac{ds}{s}.
\]
This is also directly connected with the strong form of the fundamental lemma
and with the equivalence \cite[Corollary 3.5, page 24]{JM91}%

\begin{align*}%
{\displaystyle\sum}
M(\theta)\vec{A}_{\theta,q}^{K\blacktriangleleft}  &  =%
{\displaystyle\sum}
M(\theta)\vec{A}_{\theta,q}^{J\blacktriangleleft}\\
&  =\{f:\int_{0}^{\infty}K(\frac{t}{r},f;\bar{A})\,d\mu(r)<\infty\},
\end{align*}
and where $\mu$ is the measure representing $\tau(t):=\inf_{0<\theta
<1}\{M(\theta)t^{\theta}\},$ that is,
\[
\tau(t)=\inf_{0<\theta<1}\{M(\theta)t^{\theta}\}=\int_{0}^{\infty}%
\min\{1,\frac{t}{r}\}\,d\mu(r),\;\;0<t<\infty.
\]

This discussion provides a motivation for the interest in the spaces with
\textquotedblleft broken powers" and \textquotedblleft broken logarithms"
defined in the literature.

\subsection{Limits, recovery of end points, and
extrapolation\label{sec:peelim}}

One implication of (\ref{lim1}), (\ref{lim2}) is that, for the real methods,
we can reverse the process of interpolation in the sense that if we know the
intermediate norms then we can recover the initial norms by taking suitable
limits. On the other hand, the $\Delta-$ and $\Sigma-$extrapolation functors
can be also used to recover the $K$- and $J$-functionals, from where we can,
once again, recover the initial norms. These methods can be used to complement
some classical inequalities in Analysis.

Let $\{F_{\theta}\}_{\theta\in(0,1)}$ be a family of interpolation functors,
which are exact of exponent $\theta$ (as examples of such families we mention
the normalized real interpolation methods $\{(\cdot,\cdot)_{\theta
,q}^{K\blacktriangleleft}\}_{\theta\in(0,1)},\{(\cdot,\cdot)_{\theta
,q}^{J\blacktriangleleft}\}_{\theta\in(0,1)},$ as well as the complex method
$\{[\cdot,\cdot]\}_{\theta\in(0,1)}$), then (cf. \cite[page 15]{JM91}) for all
Gagliardo complete pairs $\vec{A}$, $t>0,f\in A_{0}\cap A_{1},$ we have,
uniformly,%
\begin{equation}
\left\Vert f\right\Vert _{\Delta(t^{\theta}F_{\theta}(\vec{A}))}\asymp
J(t,f;\vec{A}), \label{pee1}%
\end{equation}
where by definition $\left\Vert {\cdot}\right\Vert _{t^{\theta}F_{\theta}%
(\vec{A})}=t^{\theta}\left\Vert {\cdot}\right\Vert _{F_{\theta}(\vec{A})}.$
Therefore,%
\[
\lim_{t\rightarrow0}\left\Vert f\right\Vert _{\Delta(t^{\theta}F_{\theta}%
(\vec{A}))}\asymp\left\Vert f\right\Vert _{A_{0}},\lim_{t\rightarrow\infty
}\frac{\left\Vert f\right\Vert _{\Delta(t^{\theta}F_{\theta}(\vec{A}))}}%
{t}\asymp\left\Vert f\right\Vert _{A_{1}}.
\]
Likewise, we can \textquotedblleft recover" the $K-$functional as follows (cf.
\cite[page 15]{JM91})%
\begin{equation}
\left\Vert f\right\Vert _{\sum\limits_{\theta}(t^{\theta}F_{\theta}(\vec{A}%
))}\asymp K(t,f;\vec{A}). \label{pee2}%
\end{equation}
As a consequence we deduce that%
\[
\lim_{t\rightarrow\infty}\left\Vert f\right\Vert _{\sum\limits_{\theta
}(t^{\theta}F_{\theta}(\vec{A}))}\asymp\left\Vert f\right\Vert _{A_{0}%
},\;\;\lim_{t\rightarrow0}\frac{\left\Vert f\right\Vert _{\sum\limits_{\theta
}(t^{\theta}F_{\theta}(\vec{A}))}}{t}\asymp\left\Vert f\right\Vert _{A_{1}}.
\]

From the previous discussion we see that if the pair $\vec{A}$ is ordered
then, letting $t=1$ in (\ref{pee1}) and (\ref{pee2}), we have%
\[
\left\Vert f\right\Vert _{\Delta(F_{\theta}(\vec{A}))}\asymp\left\Vert
f\right\Vert _{A_{1}},\left\Vert f\right\Vert _{\sum\limits_{\theta}%
(F_{\theta}(\vec{A}))}\asymp\left\Vert f\right\Vert _{A_{0}}.
\]

As an application we show a connection to the celebrated
Bourgain-Brezis-Mironescu-Maz'ya-Shaposhnikova Sobolev-Besov inequalities (cf.
\cite{bbm1}, \cite{leo}, \cite{PO}, and the references therein).

\begin{example}
For more background information concerning this example we refer to \cite{bs},
\cite{leo}. For succinctness for each $(s,r)\in(0,1)\times(1,\infty)$ we let
$\mathring{W}^{s,r}(\mathbb{R}^{n}):=(L^{r}(\mathbb{R}^{n}),\mathring{W}%
_{r}^{1}(\mathbb{R}^{n}))_{s,r}^{K\blacktriangleleft}$. Then, from
(\ref{pee2}) we see that for all $t>0,$
\[
\left\Vert f\right\Vert _{\sum\limits_{s}(t^{s}\mathring{W}^{s,r}%
(\mathbb{R}^{n}))}\asymp K(t,f;L^{r}(\mathbb{R}^{n}),\mathring{W}_{r}%
^{1}(\mathbb{R}^{n}))\asymp w_{r,f}(t),
\]
where $w_{r,f}(t)$ is the $r-$modulus of continuity of $f$ at $t$ (cf.
Introduction and \cite[Exercise 13 (b), page 431]{bs}). As another simple
application, let $p,q\in(1,\infty)$ be fixed and suppose that $T$ is a bounded
linear operator such that for all $s\in(0,1),$ $T:\mathring{W}^{s,p}%
(\mathbb{R}^{n})\rightarrow\mathring{W}^{s,q}(\mathbb{R}^{n}),$ with
$\left\Vert T\right\Vert _{\mathring{W}^{s,p}\rightarrow\mathring{W}^{s,q}%
}\leq C,$ independent of $s.$ Then, from (\ref{pee2}) (i.e., by extrapolation)
we have%
\begin{equation}
w_{q,Tf}(t)\leq Cw_{p,f}(t), \label{modulo}%
\end{equation}
where $C$ is an absolute constant.
\end{example}

\end{document}